\pgfplotsset{compat=newest}
\tikzset{/tikz/external/only named=true}
\tikzset{external/system call={lualatex \tikzexternalcheckshellescape -halt-on-error -interaction=batchmode -jobname "\image" "\texsource"}}
\tikzset{/tikz/external/mode=list and make}
\renewrobustcmd*{\bibinitdelim}{}
\newtheorem{theorem}{Theorem}[section]
\newtheorem{lemma}[theorem]{Lemma}
\newtheorem{corollary}[theorem]{Corollary}
\theoremstyle{definition}
\newtheorem{definition}[theorem]{Definition}
\theoremstyle{remark}
\numberwithin{equation}{section}
\DeclareMathOperator*{\argmin}{arg\,min}
\begin{document}

\makeatletter
\newcommand{\abs}[1]{%
  \@ifnextchar^{\abs@sp{#1}}{\abs@nosp{#1}}%
}
\newcommand{\abs@nosp}[1]{\mathchoice{\left\lvert #1\right\rvert}{\lvert #1\rvert}{\lvert #1\rvert}{\lvert #1\rvert}}
\newcommand{\abs@sp}[3]{\mathchoice{\left\lvert #1\right\rvert^{#3}}{\lvert #1\rvert^{#3}}{\lvert #1\rvert^{#3}}{\lvert #1\rvert^{#3}}}
\newcommand{\norm}[2][]{%
  \@ifnextchar^{\norm@sp[{#1}]{#2}}{\norm@nosp[{#1}]{#2}}%
}
\newcommand{\norm@sp}[4][]{%
  \mathchoice{\left\lVert #2 \right\rVert_{#1}^{#4}}{\lVert #2 \rVert_{#1}^{#4}}{\lVert #2 \rVert_{#1}^{#4}}{\lVert #2 \rVert_{#1}^{#4}}%
}
\newcommand{\norm@nosp}[2][]{%
  \mathchoice{\left\lVert #2 \right\rVert_{#1}}{\lVert #2 \rVert_{#1}}{\lVert #2 \rVert_{#1}}{\lVert #2 \rVert_{#1}}%
}
\newcommand{\scalar}[3][]{%
  \@ifnextchar^{\scalar@sp[{#1}]{#2}{#3}}{\scalar@nosp[{#1}]{#2}{#3}}%
}
\newcommand{\scalar@nosp}[3][]{\mathchoice{{\left\langle #2 , #3 \right\rangle_{#1}}}{{\langle #2 , #3 \rangle_{#1}}}{{\langle #2 , #3 \rangle_{#1}}}{{\langle #2 , #3 \rangle_{#1}}}}
\newcommand{\scalar@sp}[5][]{\mathchoice{{\left\langle #2 , #3 \right\rangle_{#1}^{#5}}}{{\langle #2 , #3 \rangle_{#1}^{#5}}}{{\langle #2 , #3 \rangle_{#1}^{#5}}}{{\langle #2 , #3 \rangle_{#1}^{#5}}}}
\makeatother
\newcommand{\set}[2][]{\mathchoice{\left\{#2\right\}_{#1}}{\left.\{#2\}_{#1}\right.}{\left.\{#2\}_{#1}\right.}{\left.\{#2\}_{#1}\right.}}

\title[Vector-valued Splines for the MEG and EEG problem]{Vector-valued Spline Method for the Spherical Multiple-shell Electro-magnetoencephalography Problem}


\author{S. Leweke}
\author{O. Hauk}
\author{V. Michel}
\address{Geomathematics Group, Department of Mathematics, University of Siegen, Walter-Flex Stra{\ss}e 3, 57068 Siegen}
\address{MRC Cognition and Brain Sciences Unit, University of Cambridge, 15 Chaucer Road, Cambridge CB2 7EF, UK}
\address{Geomathematics Group, Department of Mathematics, University of Siegen, Walter-Flex Stra{\ss}e 3, 57068 Siegen}
\email{leweke@mathematik.uni-siegen.de}
\email{olaf.hauk@mrc-cbu.cam.ac.uk}
\email{michel@mathematik.uni-siegen.de}

\subjclass[2020]{41A15, 42C10, 45B05, 46C07, 46N40, 47A52, 65D07, 65R30, 65R32}

\date{}

\begin{abstract}
  Human brain activity is based on electrochemical processes, which can only be measured invasively. 
  For this reason, quantities such as magnetic flux density (via MEG) or electric potential differences (via EEG) are 
  measured non-invasively in medicine and research. 
  The reconstruction of the neuronal current from the measurements is a severely ill-posed problem 
  though the visualization of the cerebral activity is one of the main research tools in cognitive neuroscience.  
  Here, using an isotropic multiple-shell model for the geometry of the human head 
  and a quasi-static approach for modeling the electro-magnetic processes, we derive a novel vector-valued spline method based on reproducing kernel Hilbert spaces (RKHS) in order to reconstruct the neuronal current from the measurements. 
  The presented vector spline method follows the path of former spline approaches and provides classical minimum norm properties.
  In addition, it minimizes the (infinite-dimensional) Tikhonov-Philips functional which handles the instability of the inverse problem. 
  This optimization problem reduces to solving a finite-dimensional system of linear equations without loss of information, due to its particular construction. 
  It results in a unique solution which takes into account that only the harmonic and solenoidal component of the neuronal current affects the measurements.
  In addition, we prove a convergence result: the solution achieved by the vector spline method converges to the generator of the data as the number of measurements increases.  
  The vector splines are applied to the inversion of several synthetic test cases, where the irregularly distributed data situation could be handled very well.
  Combined with several parameter choice methods, numerical results are shown for synthetic test cases with and without additional Gaussian white noise.
  Former approaches based on scalar splines are outperformed by the novel vector splines results with respect to the normalized root mean square error. 
  Finally, results for real data are demonstrated. 
  They can be computed quickly and are reasonable with respect to physiological expectations.
\end{abstract}

\maketitle

\smallskip
\noindent \textbf{Keywords.} 
electroencephalography, ill-posed problems, integral equation, inverse problems, magnetoencephalography, regularization methods, reproducing kernel Hilbert spaces, scalar spherical splines, vector spherical splines

\frenchspacing

\section{Introduction}

Neurons in the cerebrum use electrochemical processes for signaling. 
These electrical activities are often referred to as cerebral or brain activity, respectively.
First of all, the signal processing and interplay of neuronal cells in the brain is responsible for human behavior and cognition. 
Accordingly, defects in the signal processing cause diseases, such as epilepsy or schizophrenia.
Second, localization of the brain activity can help to answer open questions in the field of cartography of the brain structures and to further understand the human brain functionality. 
Hence, an accurate visualization of the cerebral activity is one of the main tools in research and diagnosis, \cite{Kandel2013}.

Since non-invasive measurements outside the head are often the only available option, the physical fundamentals connecting the brain activity with measurable quantities need to be understood and modeled adequately.
The mathematical modeling of these fundamentals results in an electric current inside the cerebrum which can be split into the neuronal current $J$ and its induced Ohmic current, where only the neuronal current is of interest \cite{HE2005}.
The cerebral current induces a macroscopic electric potential $u_s$ and a magnetic field $B$, which are transmitted through the conductive brain tissues and can be measured outside the head non-invasively provided that the number of simultaneously active neurons is large enough. 

In order to formulate the relation between the neuronal current and the measured quantities, we follow the path of optimize-then-discretize approaches in \cite{Leweke2018,Leweke2020,Dassios2009c,Fokas2009,Fokas2012,deMunck1988}. 
Via this ansatz, the problem is solved analytically as far as possible. 
This has the advantage that structures like non-visible parts of the current are conserved up to the implementation.
Otherwise, model assumptions are required to enable the analysis. 
For our approach, it is sufficient to model the neuronal current as continuously distributed current, the head via the common multiple-shell model, \cite{Hamalainen1993,deMunck1988}, and the physics via quasi-static Maxwell's equations, \cite{Hamalainen1993,Plonsey69,Plonsey1967}.  

Within this framework, the problem of directly reconstructing the vector-valued neuronal current from magnetic flux density $\nu \cdot B$ measurements by the magnetoencephalograph (MEG), \cite{Cohen2009}, and electric potential differences values on the scalp obtained via the electroencephalograph (EEG), \cite{Cook2002}, was addressed in \cite{Leweke2018,Leweke2020}.
Therein, singular value decompositions (SVD) of the corresponding operators are derived based on a novel set of vector-valued orthonormal basis functions. 
First of all, the SVD paves the path for determining the parts of the neuronal current which can be reconstructed from the measurements.
This is closely related to the specification of the operator null space.
Moreover, asymptotic behavior of the singular values characterizes the ill-posedness of the inverse problems.
Related problems of reconstructing scalar parts of the cerebral current from these measurements have been discussed before in \cite{Dassios2009c,Fokas2009,Fokas2012,deMunck1988}.

Based on the insights achieved by SVDs, it is revealed that 
for the reconstruction of the neuronal current $J$ from the given measurements several challenges need to be tackled:
\begin{description}
  \item[Non-uniqueness] If a solution of the inverse problem exists, it is not unique, since only the harmonic part of the solenoidal directions of the neuronal current is not silent for the MEG and EEG, \cite{Leweke2020}.  
  \item[Instability] Due to the distance between the head and the MEG device and the distance between the cerebrum and the outer surface of the scalp, terms emerge in the formulae which are associated to a continuation of the magnetic/electric field down to a sphere with lower radius. 
  This is a well-known mathematical problem, which also occurs in the geosciences for the magnetic and the gravitational field. 
  It leads to exponentially diverging singular values for the inverse operator, which implies a severe ill-posedness of the inverse problem. 
  As a consequence, small changes in the data (e.g. by means of noise) have a huge impact on the reconstruction. 
  \item[Noise] The achieved data is noisy, due to physiological noise (e.g. head, eye, jaw, or neck movements, magnetic field generated by the heart) and non-physiological noise (e.g. technical noise level of the devices, particles attached or implanted to the patient's body, other outer fields and currents), \cite{Hamalainen1993,Leweke2020}. 
  \item[Data Distribution] There are only few sensor positions available (commonly a few dozens to a few hundreds in the case of only MEG or EEG measurements), which are irregularly distributed and have a major gap in the area of the face and the lower half of the head.
\end{description}

The aim of this paper is to reconstruct the vector-valued neuronal current directly by developing a robust and stable numerical optimize-then-discretize method for solving these functional inverse problems addressing these challenges.
Within this paper, we will mainly answer the following questions 
\begin{itemize}
  \item Does our method produce a reasonable and correct reconstruction?
  \item Is the method stable with respect to increasing noise level?
  \item Is there an advantage of reconstructing the vector-valued current directly instead of reconstructing scalar-valued components as an intermediate step?
\end{itemize}
First of all, regularization techniques are required to handle the ill-posedness: the best-approximate solution obtained from noisy data is not suitable for the reconstruction, since the generalized Moore-Penrose inverse is unbounded if it exists, \cite{Engl1996}. 
Popular regularizations include generalized Tikhonov-Philips functionals.
In the context of approximations of the neuronal current based on continuously distributed MEG and EEG models, a regularization method based on global orthonormal basis functions (related to spherical harmonics) and scalar spherical splines have been used before, \cite{Fokas2012}. 
In addition, for the EEG model a hybrid analytical-numerical algorithm using OpenMEG exists, \cite{Hashemzadeh2020}. 
Besides, discrete models (e.g. MNE, LORETA, FOCUSS) often yield in a finite-dimensional Tikhonov-regularized normal equation, which is solved by a variety of optimization algorithms, \cite{Grech2008,HE2005}.
Though such methods have also become established, their usage has to be seen critically from the mathematical point of view, since the discretization approach does not consider the known results on the large null spaces of the inverse problems. 
Hence, it is not possible to distinguish uniquely determined components of the solution from remaining degrees of freedom including possible artifacts.

Within this paper, we extend the idea of the regularization method based on global orthonormal basis functions and construct vector-valued as well as scalar-valued splines based on the SVD of the integral operators. 
In order to obtain a unique solution, we assume additionally that the neuronal current satisfies an $\mathrm{L}^2$-minimum-norm condition. 
This results (in the unregularized case) in the best-approximate solution of the inverse problems, \cite[Thm. 13.6]{Leweke2020}, which is entirely contained in the orthogonal complement of the operator null space.

The idea of using a reproducing kernel to regularize an inverse problem is not new. 
For instance, \cite{Nashed1974} presented a regularization theory for linear inverse problems in RKHSs. 
Regarding examples of approaches tailored for specific domains, (functional) spherical splines based on reproducing kernels (which will shortly be referred to \emph{spherical splines}) are a widely known tool for interpolation and approximation, which is frequently used for solving (inverse) problems in the geosciences and beyond.
This method goes back to \citeauthor{Freeden1981} and \citeauthor{Wahba1981} and provides us with a best-approximate solution, \cite{Freeden1981,Freeden1998,Freeden1981b,Wahba1981}, in the unregularized case. 
They are constructed via spatially localized reproducing kernels, hence local changes in the data have only mainly local effects in the reconstruction. 
In addition, spherical splines have the smallest (RKHS) norm among all functions fitting the data.
In addition, the formulation via splines allows, without loss of information, to transfer the infinite-dimensional optimization problem of minimizing the Tikhonov-Philips functional to a finite-dimensional system of linear equations.
Due to these nice properties, scalar spherical splines have been used in several applications on the unit sphere, \cite{Michel2015Geo,Michel2013,Freeden1993,Schneider1996}, and on the ball, \cite{Amirbekyan2008,Berkel2010b,Berkel2010a}. 
Also in the MEG and EEG setting they have been used before in \cite{Fokas2012}.

In this paper, we will construct mainly two types of spherical splines over the three-dimensional ball. 
For the first type, further decompositions of the neuronal current are required in order to achieve a reduction to a scalar-valued problem. 
Then, scalar-valued splines connected to tailor-made orthonormal basis functions are constructed to solve these problems. 
This follows the path of \cite{Fokas2012} but it takes into account the novel knowledge about those parts of the neuronal current which are silent to MEG and EEG.
Afterwards, the scalar solutions have to be transferred back to the neuronal current which we are interested in. 
The second and novel type of spherical splines on the ball is vector-valued and can be used to solve the vector-valued MEG and EEG problem directly. 
This novel approach conserves the excellent properties of former (scalar) spline methods like the minimum properties. 

From a theoretical point of view, a direct vector approach for the entire neuronal current has several advantages over the scalar approaches. 
First of all, one obtains a direct relation between the desired quantity and the measured one which reduces the effort of transforming the scalar quantities back to the current. 
This is an advantage, since in several cases, there exists no one-to-one relation between the scalar and the vector quantities. 
This takes effects especially when adding uniqueness constraints.
Besides, additional smoothness or boundary conditions or complementary gauges, which may not have a physical or medical meaning, are required for the current in order to derive or transfer the scalar solutions. 
Finally, from the vector approach and its characterization of the null space it became explicit that only the combined inversion of MEG and EEG data can depict a comprehensive image of the current. 
The vector approach yields the easiest possibility to combine these to inversions compared to the scalar approaches, see \cite{Leweke2020,Leweke2018}.

Within this paper, we will show that also from a numerical point of view, the vector approach has several advantages compared to the scalar ones which manifests in lower approximation errors and a higher stability with respect to the influence of noise. 
This is done by several synthetic numerical tests. 
Afterwards the vector spline method is applied to real data, where the corresponding approximation of the neuronal current answers all physiological expectations.

The rest of the paper is structured as follows: In Sec.~\ref{sec:Prelim} short introductions into (vector-valued) orthonormal basis functions with generalized Fourier analysis on the ball, the multiple-shell model for the MEG and EEG problem, and the scalar reproducing kernel based spline method for inverse problems are given. 
In Sec.~\ref{sec:RKHSball} vector-valued functional RKHS and the corresponding splines on the ball are constructed.
Afterwards, properties of the spline such as minimum properties, approximation property, and convergence results are proven. 
In Sec.~\ref{sec:Foundations} the foundations for the implementation of the particular MEG and EEG scalar and vector spline methods are summarized.
The particular numerical framework and the numerical results for the synthetic test cases as well as the inversion of the real data are stated in Sec.~\ref{sec:numerics}. 
Therein, the used methods are also checked against each other.
Eventually, conclusions are presented in Sec.~\ref{sec:conclude}.

\section{Preliminaries}\label{sec:Prelim}
\subsection{Construction of Suitable Orthonormal Basis Functions}

As stated before, an adequate set of orthonormal basis functions is required for the SVD of the integral operators. 
In addition, due to Mercers representation theorem, \cite{Mercer1909}, it is well-known that reproducing kernels are closely connected to orthonormal basis functions. 
Eventually, these particular basis functions build the foundation of the presented spline methods. 
Before we have a closer look at these basis functions, we give a short introduction to the used notations.

The Euclidean inner product is denoted by $\cdot$, the vector product by $\times$, and the tensor product by $\otimes$. 
In addition, we will use throughout the paper the abbreviations $x = r \xi$ and $y = s \eta$ with unit vectors $\xi$, $ \eta \in  S \coloneqq  S_{1} \subset \mathbb{R}^3$ and radii $r = |x|$, $s=|y|$, where $S_1$ is the unit sphere in $\mathbb{R}^3$. 
This decomposition is unique for all $x \in \mathbb{R}^3\setminus \{0\}$. 
Furthermore, $B_R \subset \mathbb{R}^3$ is the ball with radius $R$ and center $0$.
In our notation, $\nabla^\ast_{\xi}$ denotes the part of the gradient $\nabla$ containing the tangential derivatives divided by $r$, \cite[Eq. (2.136)]{Freeden2009}. $\nabla^\ast_{\xi}$ is often called the surface gradient. The differential operator $L^\ast$ (independent of the radius $r$) is defined by
\begin{equation}\label{defi:Last}
  L^\ast_{\xi} \coloneqq x \times \nabla_{x} = \xi \times \nabla^\ast_{\xi}
\end{equation}
and called the surface curl operator. 
The Beltrami operator is given by $\Delta^\ast_{\xi} = \nabla^\ast_{\xi} \cdot \nabla^\ast_{\xi}$, \cite[Eq. (2.140)]{Freeden2009}, and is the part of the Laplacian independent of the radius.
Note that variables as indices of operators indicate the dependence to which the operator is implied, since this is otherwise not always unique (e.g. \eqref{eq:LegPol} below).

The construction of our basis function is in all cases based on spherical harmonics.
Recall that a function of $\mathrm{Harm}_n( S)$ (i.e. the space of all homogeneous, harmonic polynomials of degree $n$ restricted to the unit sphere $ S$, \cite[Def. 3.22]{Freeden2009}) is called a spherical harmonic of degree $n \in\mathbb{N}_0\coloneqq \mathbb{N} \cup \set{0}$.
With $\set[j=1,\dots,2n+1]{Y_{n,j}}$ we denote an $\mathrm{L}^2( S)$-orthonormal set in $\mathrm{Harm}_n( S)$, \cite[Rem. 3.25]{Freeden2009}. 
For more details on scalar spherical harmonics and their properties, see for instance \cite{Freeden2009}. 
For an introduction to (vector-valued) Lebesgue spaces, see \cite{Bauer2001}.

By means of the scalar spherical harmonics, we can define a complete $\mathrm{L}^2( S,\mathbb{R}^3)$-orthonormal system of vector-valued spherical harmonics, \cite[Thm. 5.56]{Freeden2009}, which goes back to Edmonds, \cite{Edmonds57}, 
and is defined for example in \cite[Eq. (5.309)-(5.311)]{Freeden2009} by
\begin{equation}\label{eq:DefiVecSph}
  \tilde{{y}}_{n,j}^{(i)}(\xi) \coloneqq \left(\tilde{\mu}_n^{(i)}\right)^{-1/2} \tilde{o}^{(i)}_{n,\xi} Y_{n,j}(\xi), \qquad \xi \in  S,
\end{equation}
where 
\begin{equation}\label{eq:Defioi}
  \tilde{\mu}_n^{(i)} \coloneqq \begin{cases}
    (n+1)(2n+1), & \text{for } i=1, \\
    n(2n+1), & \text{for } i=2, \\
    n(n+1), & \text{for } i=3,
  \end{cases}
  \qquad 
    \tilde{o}^{(i)}_{n,\xi} \coloneqq \begin{cases}
      (n+1) \xi  - \nabla^\ast_{\xi}, & \text{for } i=1, \\
      n \xi  + \nabla^\ast_{\xi}, & \text{for } i=2, \\
      L^\ast_{\xi}, & \text{for } i=3, 
  \end{cases}
\end{equation}
for all $i\in\{1, 2, 3\}$, $n \in \mathbb{N}_{0_i}$, and $j=1,\dots,2n+1$. 
We use $\mathbb{N}_{0_i}$ as an abbreviation for $\mathbb{N}_0$ in the case of $i=1$ and for $\mathbb{N}$ in the case of $i\in\{2, 3\}$. 
Note that these vector-valued spherical harmonics are homogeneous harmonic polynomials. 
This and more information on vector spherical harmonics can be found in \cite{Freeden2009}.
In analogy, we can define, for all $i\in\{1, 2, 3\}$ and $n \in \mathbb{N}_{0_i}$, Edmonds-vector-Legendre polynomials by means of the scalar Legendre polynomials $P_n$, \cite[Eq. (3.165)]{Freeden2009}, of degree $n\in\mathbb{N}_0$ and type $i \in \{1,2,3\}$ as in \cite[Lem. 5.63]{Freeden2009},
\begin{equation}\label{eq:LegPol}
  \tilde{{p}}^{(i)}_n( {\xi},{\eta}) \coloneqq \left(\tilde{\mu}_n^{(i)}\right)^{-1/2}\tilde{o}^{(i)}_{n,\xi}P_n({\xi}\cdot {\eta}) , \qquad \xi,\, \eta \in  S. 
\end{equation}
Besides this, the Legendre polynomials and their vectorial counterpart enable  addition theorems, \cite[Thm. 3.26, Thm. 5.64]{Freeden2009}. These imply for all types and degrees the representations
\begin{equation}\label{eq:AdditionThm}
  \sum_{j=1}^{2n+1} Y_{n,j}({\xi}) Y_{n,j}({\eta}) = \frac{2n+1}{4\pi} P_n( {\xi}\cdot{\eta}), \qquad \sum_{j=1}^{2n+1} \tilde{{y}}^{(i)}_{n,j}({\xi}) Y_{n,j}({\eta}) = \frac{2n+1}{4\pi} \tilde{{p}}^{(i)}_n( {\xi},{\eta}), \qquad \xi,\, \eta \in  S. 
\end{equation} 
Furthermore, we also obtain for (pointwise) Euclidean norms
\begin{equation}\label{eq:AddThmAbs}
  \sum_{j=1}^{2n+1} \abs{\tilde{{y}}^{(i)}_{n,j}({\xi})}^2 = \frac{2n+1}{4\pi}, \qquad \xi \in  S. 
\end{equation}
Based on a separation ansatz, we combine the vector spherical harmonics with orthogonal Jacobi functions $P_m^{(\alpha,\beta)}$ for the radial part in order to achieve an orthonormal set of functions over the ball. 
For further details on Jacobi polynomials, see, for instance, \cite{Szego1975}.
\begin{theorem}\label{thm:ONBBall}
  The set of vector-valued functions $\tilde{{g}}^{(i)}_{m,n,j}(R;\cdot)$ for $i\in\{1, 2, 3\}$, $m\in\mathbb{N}_0$, $n\in\mathbb{N}_{0_i}$, and $j=1,\dots,2n+1$ with the parameter 
  \begin{equation*}
    t_n^{(i)} \coloneqq \begin{cases}
      n & i =1,\, 3,\\
      n-1 & i =2, \\
    \end{cases}
  \end{equation*}
  is defined via
  \begin{equation*}
    \tilde{{g}}^{(i)}_{m,n,j}(R;{x}) \coloneqq 
    \sqrt{\frac{4m+2t^{(i)}_n + 3}{R^3}} \left(\frac{r}{R}\right)^{t^{(i)}_n} P_m^{\left(0,t^{(i)}_n + 1/2\right)}\left(2\frac{r^2}{R^2} -1\right) \tilde{{y}}_{n,j}^{(i)}(\xi), \qquad x \in B_R.
  \end{equation*} 
  It is a complete orthonormal system in $\mathrm{L}^2(B_R, \mathbb{R}^3)$. Eventually, each $f \in \mathrm{L}^2(B_R, \mathbb{R}^3)$ has with the abbreviation $f^\wedge(i,m,n,j) \coloneqq \left\langle f, \tilde{{g}}^{(i)}_{m,n,j}(R;\cdot) \right\rangle_{\mathrm{L}^2(B_R, \mathbb{R}^3)}$ the representation
  \begin{equation}\label{eq:FourierSeiresVec}
    f = \sum_{i=1}^3 \sum_{m=0}^\infty \sum_{n=0_i}^\infty \sum_{j=1}^{2n+1} f^\wedge(i,m,n,j) \tilde{{g}}^{(i)}_{m,n,j}(R;\cdot),
  \end{equation}
  which converges unconditionally and strongly in the $\mathrm{L}^2(B_R, \mathbb{R}^3)$-sense.
\end{theorem}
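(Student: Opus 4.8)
The plan is to establish orthonormality and completeness separately and then read off the Fourier expansion. For orthonormality I would use that the volume element on $B_R$ factors as $dx = r^2\,dr\,d\omega(\xi)$, so that for two of the candidate functions the $\mathrm{L}^2(B_R,\mathbb{R}^3)$-inner product splits (Fubini) into the product of an angular integral $\int_S \tilde{{y}}^{(i)}_{n,j}(\xi)\cdot\tilde{{y}}^{(i')}_{n',j'}(\xi)\,d\omega(\xi)$ and a radial integral over $(0,R)$ with weight $r^2$. The angular factor equals $\delta_{ii'}\delta_{nn'}\delta_{jj'}$ by the $\mathrm{L}^2(S,\mathbb{R}^3)$-orthonormality of the Edmonds vector spherical harmonics \cite[Thm. 5.56]{Freeden2009} (in particular, harmonics of different types $i$ are mutually orthogonal). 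When the angular indices agree, I would substitute $u = 2r^2/R^2 - 1$ in the remaining radial integral: since $(r/R)^{2t^{(i)}_n}$ together with the Jacobian produces exactly the Jacobi weight $(1-u)^{0}(1+u)^{t^{(i)}_n+1/2}$ on $(-1,1)$, the integral collapses, by the classical orthogonality relation for the Jacobi polynomials $P_m^{(0,\,t^{(i)}_n+1/2)}$, to a constant times $\delta_{mm'}$, and a short computation shows that $\sqrt{(4m+2t^{(i)}_n+3)/R^3}$ is precisely the normalising factor turning this constant into $1$.

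For completeness it suffices, by the same splitting, to argue in the radial and angular variables separately. Given $f\in\mathrm{L}^2(B_R,\mathbb{R}^3)$, Fubini shows that $f(r\,\cdot\,)\in\mathrm{L}^2(S,\mathbb{R}^3)$ for almost every $r\in(0,R)$; expanding it in the complete orthonormal system of vector spherical harmonics and invoking Parseval's identity on $S$ produces coefficient functions $f^{(i)}_{n,j}(r) \coloneqq \int_S f(r\xi)\cdot\tilde{{y}}^{(i)}_{n,j}(\xi)\,d\omega(\xi)$ that lie in $\mathrm{L}^2((0,R),r^2\,dr)$ and satisfy $\norm[\mathrm{L}^2(B_R,\mathbb{R}^3)]{f}^2 = \sum_{i,n,j}\int_0^R \abs{f^{(i)}_{n,j}(r)}^2 r^2\,dr$. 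Hence the claim reduces to showing that, for each fixed triple $(i,n,j)$, the radial functions $r\mapsto \sqrt{(4m+2t^{(i)}_n+3)/R^3}\,(r/R)^{t^{(i)}_n}P_m^{(0,\,t^{(i)}_n+1/2)}(2r^2/R^2-1)$, $m\in\mathbb{N}_0$, form a \emph{complete} orthonormal system of $\mathrm{L}^2((0,R),r^2\,dr)$, orthonormality being already contained in the first part. The substitution $u = 2r^2/R^2-1$ identifies $\mathrm{L}^2((0,R),r^2\,dr)$ isometrically with $\mathrm{L}^2\big((-1,1),(1+u)^{t^{(i)}_n+1/2}\,du\big)$ in such a way that these radial functions correspond, up to constants, to the Jacobi polynomials $P_m^{(0,\,t^{(i)}_n+1/2)}$; since $t^{(i)}_n\ge 0$ this weighted measure on the compact interval $[-1,1]$ is finite, so polynomials are dense (Weierstrass), and the Jacobi polynomials, being the orthogonal polynomials for that weight, are therefore a complete orthonormal system after normalisation.

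Combining the radial and angular completeness statements gives that $\{\tilde{{g}}^{(i)}_{m,n,j}(R;\cdot)\}$ is complete in $\mathrm{L}^2(B_R,\mathbb{R}^3)$, and together with orthonormality this yields the expansion \eqref{eq:FourierSeiresVec}, whose unconditional and strong $\mathrm{L}^2(B_R,\mathbb{R}^3)$-convergence is the standard property of an orthonormal basis of a Hilbert space. I expect the main care to be needed in the completeness step: first, in verifying that the substitution $u = 2r^2/R^2-1$ genuinely turns $(r/R)^{2t^{(i)}_n}r^2\,dr$ into a constant multiple of the Jacobi weight $(1+u)^{t^{(i)}_n+1/2}\,du$, so that the radial system is an honest image of a Jacobi orthonormal basis; and second, in organising the radial--angular expansion as an \emph{orthogonal direct sum} over $(i,n,j)$ of copies of $\mathrm{L}^2((0,R),r^2\,dr)$ rather than as a plain Hilbert tensor product, since the radial basis really depends on the angular index through $t^{(i)}_n$.
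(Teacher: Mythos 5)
Your proof is correct and follows the standard separation-of-variables argument: angular orthonormality of the Edmonds vector spherical harmonics, the substitution $u = 2r^{2}/R^{2}-1$ converting the weighted radial integral into the Jacobi orthogonality relation for $P_m^{(0,\,t_n^{(i)}+1/2)}$ (with the stated constant indeed normalising it to one), and completeness via density of polynomials for the finite Jacobi weight, correctly organised as an orthogonal direct sum over $(i,n,j)$ since the radial system depends on $t_n^{(i)}$. Note that the paper itself does not prove Thm.~\ref{thm:ONBBall} but defers to \cite{Leweke2020}, where essentially this same construction is carried out, so your argument matches the intended proof.
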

Recall that other choices for the parameter sequence $(t_n^{(i)})_n$ are possible, as long as the condition \newline $\inf_{n\in\mathbb{N}_{0_i}} t_n^{(i)} \geq -\frac{3}{2}$  is satisfied, see \cite{Leweke2020}, also regarding a proof of Thm.~\ref{thm:ONBBall}. 
However, the stated sequence is the natural choice corresponding to our application, \cite[Rem.~5.2]{Leweke2018}. 
\begin{definition}\label{cor:Fnj}
  Let $f\in\mathrm{L}^2(B_R, \mathbb{R}^3)$ be a given function. 
  For almost all $r \in [0,R]$, $i\in\{1, 2, 3\}$, $n\in\mathbb{N}_{0_i}$, and $j=1,\dots,2n+1$ we define
  \begin{equation}\label{eq:GenFourExpans}
    f_{n,j}^{(i)}(r) \coloneqq \int_{ S} f(r\xi) \cdot \tilde{y}_{n,j}^{(i)}(\xi)\, \mathrm{d}\omega(\xi).
  \end{equation}
  An analogue can be obtained for scalar-valued functions $F\in \mathrm{L}^2(B_{R})$  with an appropriate sequence $(t_n)_{n\in\mathbb{N}_0}$ by 
  \begin{equation}\label{eq:FnrGenExpan}
    F_{n,j}(r)  \coloneqq \int_{ S} F(x) Y_{n,j}(\xi)\, \mathrm{d}\omega(\xi). 
  \end{equation}
\end{definition}

\subsection{The Multiple-shell Model for MEG and EEG}

Having the described scalar- and vector-valued orthonormal basis function at hand, we are able to recapitulate the SVD of the integral operators and related relations between the measured quantities (i.e., magnetic flux density and electric potential differences) and the neuronal current $J$.
This SVD is a central point for the construction of the (vector) functional splines for inverse problems.

Before, we state these results, we introduce the common \emph{multiple-shell model} used for modeling the head, \cite{deMunck1993}.
More precisely, in our setting, we assume that  
\begin{itemize}
  \item the cerebrum is a closed ball with radius $\varrho_0$, that is $B_{\varrho_0}$,
  \item surrounding the cerebrum, there are $L \geq 2$ spherical shells $S_{[\varrho_l,\varrho_{l+1}]} \coloneqq \overline{B_{\varrho_{l+1}} \setminus B_{\varrho_{l}}}$ for $l=1,\dots,L-1$ modeling the various head tissues,
  \item each tissue (i.e. each shell $S_{[\varrho_l,\varrho_{l+1}]}$) has a constant conductivity $\sigma_l$ for all $l=0,\dots,L-1$ and outside the head the conductivity is vanishing, $\sigma_L = 0$,
  \item the permeability is constant everywhere and equals the permeability of the vacuum $\mu_0$, 
  \item the relation between the neuronal current and the induced quantities can be modeled by means of quasi-static Maxwell's equations, \cite{Plonsey69}, and
  \item the continuously distributed neuronal current $J \in \mathrm{L}^2(B_{\varrho_0},\mathbb{R}^3)$ is non-vanishing only inside the cerebrum.
\end{itemize}
Note that we do not assume further smoothness or boundary conditions for the neuronal current.

In the MEG case, the functionals $\mathcal{A}_{\mathrm{M}}^{k} \colon \mathrm{L}^2(B_{\varrho_0},\mathbb{R}^3) \to \mathbb{R}$ map the neuronal current onto the magnetic flux density evaluated at the sensor positions $y_k \in \overline{\mathbb{R}^3 \setminus B_{\varrho_L}}$, $k=1,\dots,\ell_M$, outside the head. The flux density is the part of the magnetic field pointing towards the normal direction $\nu$ of the sensor surface. In the EEG case, the functionals $\mathcal{A}_{\mathrm{E}}^{k} \colon \mathrm{L}^2(B_{\varrho_0},\mathbb{R}^3) \to \mathbb{R}$, $k=1,\dots,\ell_{\mathrm{E}}$, map the current onto the electric potential difference measured at several positions on the scalp.

Summarizing the results, we obtain the following equations for functionals, where they are stated in previous publications, \cite[Eq.~(6.5)]{Leweke2020} in the MEG case and in \cite[Proof of Thm.~6.3]{Leweke2020} in the EEG case, respectively:
\begin{align}
   \mathcal{A}_{\mathrm{M}}^{k} J 
    &= \nu(y_k) \cdot B(y_k) \notag \\ 
    &=-\mu_0 \sum_{n=1}^\infty \sum_{j=1}^{2n+1} \sqrt{\frac{n \varrho_0}{(2n+1)(2n+3)}}  \scalar[\mathrm{L}^2(B_{\varrho_0},\mathbb{R}^3)]{J}{\tilde{g}^{(3)}_{0,n,j}(\varrho_0;\cdot)} \left(\frac{\varrho_0}{s_k}\right)^{n+1}\frac{1}{s_k} \nu(y_{k}) \cdot \tilde{y}_{n,j}^{(1)}(\eta_k), \label{eq:funcVector_MEG}\\
  \mathcal{A}_{\mathrm{E}}^k J &= u_s(y_k) \notag\\
  &= \sum_{n=1}^\infty \sum_{j=1}^{2n+1} \sqrt{\frac{n}{\varrho_0}}  \scalar[\mathrm{L}^2(B_{\varrho_0},\mathbb{R}^3)]{J}{\tilde{g}_{0,n,j}^{(2)}(\varrho_0;\cdot)} \left(\frac{(n+1)}{n} \left(\frac{s_k}{\varrho_L}\right)^{2n+1} + 1 \right) \left(\frac{\varrho_0}{s_k}\right)^{n+1} \beta^{(L)}_n Y_{n,j}(\eta_k) \label{eq:funcVector_EEG}.
\end{align}
Recall that $y_k = s_k \eta_k\in \mathbb{R}^3$ for all $k=1,\dots,\ell_{\mathrm{M}}$ or $k=1,\dots,\ell_{\mathrm{E}}$, respectively, denotes the sensor positions of the measurement devices. 
Note that the coefficients $(\beta^{(L)}_n)_n$ depend on the particular model geometry and the tissues conductivities. 
A recursive formula for determining these coefficients and an analysis of the corresponding asymptotic behavior are given in \cite{Leweke2020,Leweke2018}. 
Therein, the absolute and uniform convergence of the series are additionally proven.

Before introducing the corresponding scalar-valued problems, we shortly sum up the information which can be achieved by the SVD-based representation of the functionals. 
In the MEG case, \eqref{eq:funcVector_MEG} reveals that only the direction of the neuronal current which corresponds to the toroidal part (i.e $i=3$) is not silent for the MEG device.
In complementary, in the EEG case, \eqref{eq:funcVector_EEG} shows that only the orthonormal basis functions of type $i=2$ affect the measurements. 
Having the construction of Edmonds vector spherical harmonics in mind and the fact that only type $i=2$ and $i=3$ are divergence-free vector fields, one can deduce that only the solenoidal direction of the neuronal current is related to the two measurements. 
In addition, not the entire solenoidal part of the current can be reconstructed, since only the degree $m=0$ of the generalized Fourier expansions contribute to the data.
This coincides with the harmonic parts of the non-silent directions. 
Hence, the native non-uniqueness condition is to require a harmonic and solenoidal neuronal current. 
This also coincides with the minimum-norm condition, which will serve as our uniqueness constraint for the neuronal current. 
In addition, the severe ill-posedness of the two inverse problems can be seen in the exponentially fast decreasing of the singular values to zero, which mainly goes back to the sequence $((\varrho_0/s_k)^{n+1})_n$.
A more detailed discussion of the non-unique solution, additional uniqueness constraints, and the singular values can be found in \cite{Leweke2018,Leweke2020}.

Besides a decomposition of the neuronal current by means of this vector-valued orthonormal basis, several other decompositions exist with the aim of only considering relevant scalar-valued parts of the neuronal current. For example, the Helmholtz decomposition with the Coulomb gauge of the neuronal current can be used for the MEG as well as the EEG problem, \cite{Dassios2013,Fokas2012,Fokas2012a,Fokas2009}.
Note that for the MEG problem, in \cite{Fokas2012}, the Helmholtz decomposition is combined with a layer density constraint to achieve uniqueness of the solution. 
Unfortunately, this uniqueness assumption contradicts our minimum norm assumption,  \cite[Sec. 20.2]{Leweke2018}. Thus, we need to adapt the approach in order to fit into our setting. 

In general, the Helmholtz decomposition for a sufficiently smooth vector-valued function is given by
\begin{equation}\label{eq:Helmholtz}
  {J} = {\nabla} \Psi + {\nabla} \times {a}
\end{equation}
with the scalar potential $\Psi \in \mathrm{C}^2(B_{\varrho_0})$ and the vector potential $a \in \mathrm{C}^2(B_{\varrho_0}, \mathbb{R}^3)$. Note that this decomposition is not unique without an additional gauge, therefore, we use the Coulomb gauge (i.e. $\nabla \cdot a = 0$). Due to its smoothness, the vector potential $a$ is decomposable further by means of the spherical Helmholtz decomposition, \cite[Eq. (5.58)]{Freeden2009},
\begin{equation*}
    a(x) = \xi A^{(1)}(x) + \nabla^\ast_{\xi} A^{(2)}(x) + L^\ast_{\xi} A^{(3)}(x).
  \end{equation*}
Inserting the decompositions for the neuronal current into the functionals \eqref{eq:funcVector_MEG}, \eqref{eq:funcVector_EEG} and using several orthogonality properties of the orthonormal basis function, \cite{Leweke2018,Leweke2020}, we obtain new functionals derived from the previous ones.  
In the MEG case, we achieve a functional $\mathcal{A}^k_{\mathrm{m}} \colon \mathrm{C}^2(B_{\varrho_0}) \to \mathbb{R}$, $k=1,\dots,\ell_{\mathrm{M}}$, mapping the \emph{scalar-valued part} $A^{(1)}$ of the neuronal current onto the magnetic flux density evaluated at the sensor positions \cite[Lem.~7.3]{Leweke2020}, that is
\begin{equation}\label{eq:ScalarFunctionalMEG}
    \begin{multlined}
    \mathcal{A}^k_{\mathrm{m}} A^{(1)} \coloneqq \mu_0 \nu(y_k) \cdot \left( \int_{B_{\varrho_0}} (\Delta_x (|x| A^{(1)}(x))) \nabla_y K_{\mathrm{m}}(x,y) \,\mathrm{d}x \middle)\right|_{y = y_k} \\ 
    =-\mu_0 \sum_{n=1}^\infty \sum_{j=1}^{2n+1} \sqrt{\frac{1}{(2n+1)(n+1)}} \left(\frac{\mathrm{d} A_{n,j}^{(1)}}{\mathrm{d}r}(\varrho_0)\varrho_0 - (n-1)A_{n,j}^{(1)}(\varrho_0)\right) \left(\frac{\varrho_0}{s_k}\right)^{n+2} \nu(y_k) \cdot \tilde{y}_{n,j}^{(1)}(\eta_k).
    \end{multlined}
\end{equation}
The precise representation of the integral kernel $K_{\mathrm{m}}$ can be found in \eqref{eq:repr_kernel}.

Apart from that, inserting the Helmholtz decomposition into the functionals for the EEG problem, two scalar parts of the neuronal current affect the measured quantity. On the one hand, we have the scalar potential $\Psi$ of the Helmholtz decomposition and on the other hand, we have the scalar part $A^{(3)}$ of the vector potential. The functionals $\mathcal{A}^k_{\mathrm{e}} \colon  \mathrm{C}^2(B_{\varrho_0}) \times \mathrm{C}^2(B_{\varrho_0}) \to \mathbb{R}$, $k=1,\dots,\ell_{\mathrm{e}}$ are given by \cite[Sec.~7.6]{Leweke2020} 
\begin{equation*}
    \mathcal{A}^k_{\mathrm{e}} \left(\Psi,A^{(3)}\right) = \sum_{n=1}^\infty \sum_{j=1}^{2n+1} \left(n \Psi_{n,j}(\varrho_0) - n(n+1) A^{(3)}_{n,j}(\varrho_0)\right) \left(\frac{n+1}{n} \left(\frac{s_k}{\varrho_L}\right)^{2n+1} +1\right) \left(\frac{\varrho_L}{s_k}\right)^{n+1} \beta_n^{(L)}  Y_{n,j}(\eta_k).
\end{equation*}
Note that no gauge is required to derive this representation. 
In the case of the Coulomb gauge, only a relation between the functions $A^{(1)}$ and $A^{(2)}$ can be achieved \cite[Thm.~15.14]{Leweke2018}. 
In the case of the Poincaré gauge (i.e. $x\cdot a= 0$), only conditions for the function $A^{(1)}$ are gained. 
In order to get rid of the function $A^{(3)}$ in the upper relation, an additional boundary condition for the neuronal current, $(x\cdot J)|_{S_{\varrho_0}} = 0$ is often used, to connect the two scalar parts: $0=\frac{\mathrm{d}}{\mathrm{d}r}\Psi(x) + r^{-1}\Delta^{\ast}_{\xi} A^{(3)}(x)$, \cite{Dassios2013,Fokas2009,Fokas2012a}. 
However, it can be proven that every neuronal current $J \in \mathrm{L}^2(B_{\varrho_0},\mathbb{R}^3)$ satisfying this boundary condition in addition to the minimum norm condition, must be equal to the zero function, \cite[Thm. 15.25]{Leweke2020}.
Hence, the Helmholtz decomposition is not suitable for combining the MEG and EEG inversion under the minimum-norm condition. 

\subsection{Introduction to Reproducing Kernel Based Spline Methods for Inverse Problems}

The spline method presented in this paper is based on the construction of reproducing kernels and their generated Hilbert spaces. 
An overview of properties of the scalar as well as the vector-valued spherical interpolating splines based on RKHS on the sphere is summarized in \cite{Freeden1998}.

In the work of \citeauthor{Amirbekyan2008}, the scalar spherical splines are extended to arbitrary RKHS over compact domains and combined with functionals for the application to (ill-posed) inverse problems, see \cite{Amirbekyan2008,Michel2013}. 
These results are also summarized for scalar Hilbert spaces over the ball in the particular context of the inverse MEG and EEG problem in \cite{Fokas2012}.

Before we extend this approach, we briefly summarize the properties and statements relevant for our application in the following. 
More information can be found in the above references and the references therein.
Now, let the system of functions
\begin{equation*}
  H_{n,j}(x) \coloneqq G_n(r)Y_{n,j}(\xi), \qquad n \in \mathbb{N}_0,\, j=1,\dots,2n+1
\end{equation*}
be a linearly independent, orthonormal system in $\mathrm{L}^2(B_R)$. 
This is suitable for isotropic applications like the inverse MEG and EEG problem.

Let $\mathscr{H} \subset \mathrm{L}^2(B_R)$ be an RKHS over the ball $B_R$ generated by the sequence $(\kappa_n)_{n}$ and equipped with the inner product
\begin{equation*}
  \scalar[\mathscr{H}]{F}{G} \coloneqq \sum_{n\in\mathbb{N}_0}\sum_{j=1}^{2n+1} \kappa_n^2 \scalar[{\mathrm{L}^2(B_R)}]{F}{H_{n,j}}\scalar[{\mathrm{L}^2(B_R)}]{G}{H_{n,j}}.
\end{equation*}
This construction goes back to \cite{Freeden1981b}.
The reproducing kernel is uniquely given via Mercers representation theorem \cite{Mercer1909} by 
\begin{equation*}
  K(x, z) = \sum_{\substack{n\in\mathbb{N}_0 \\ \kappa_n \neq 0}} \sum_{j=1}^{2n+1} \kappa_n^{-2} H_{n,j}(x) H_{n,j}(z), \qquad x,\, z \in \mathrm{B}_R.
\end{equation*}
In order to make sense of the latter series expressions, the sequence $(\kappa_n)_{n}$ needs to satisfy a summability condition, that is
\begin{equation*}
  \sum_{\substack{n\in \mathbb{N}_0 \\ \kappa_n \neq 0}} \kappa_n^{-2} \sup_{x \in B_{R}} \abs{\sum_{j=1}^{2n+1}  H_{n,j}(x)}^2 = \sum_{\substack{n\in \mathbb{N}_0 \\ \kappa_n \neq 0}} \frac{2n+1}{4\pi} \kappa_n^{-2} \sup_{r \in [0,R]} \abs{G_{n}(r)}^2 < \infty.
\end{equation*} 
According to \cite[Eq.\ (41)]{Fokas2012}, the corresponding \emph{(scalar) spline function $S$} for the interpolation problem
\begin{equation*}
  g_k = \mathcal{A}^k F, \qquad k=1, \dots, \ell,
\end{equation*}
where each linear functional $\mathcal{A}^k$ for $k=1,\dots,\ell$ maps from $\mathscr{H} $ continuously to $\mathbb{R}$, is of the form
\begin{equation}\label{eq:DefiScalarSpline}
  S \coloneqq \sum_{k=1}^\ell \alpha_k \mathcal{A}^k_{z} K(\cdot, z)= \sum_{k=1}^\ell \alpha_k \sum_{\substack{n\in\mathbb{N}_0 \\ \kappa_n \neq 0}} \sum_{j=1}^{2n+1} \kappa_n^{-2} \left(\mathcal{A}^k H_{n,j}\right) H_{n,j}
\end{equation}
with arbitrary but real coefficients $\alpha = (\alpha_k)_k$.
It is well-known that a scalar spherical spline function has the following properties:
\begin{enumerate}
  \item For given data $g\in\mathbb{R}^\ell$ and continuous linear functionals $\mathcal{A}^k$ for $k=1,\dots,\ell$, the interpolation problem is uniquely solvable if and only if the functionals $\mathcal{A}^k$ are linearly independent, see \cite[Thm.\ 5.10]{Amirbekyan2008}.
  \item Among all solutions $F \in \mathscr{H}$ interpolating the data, that is $\mathcal{A}^k F = g_k$ for $k=1,\dots,\ell$, the spline function is the only solution with minimal $\mathscr{H}$-norm, see \cite[Thm.\ 10.14]{Amirbekyan2008}.
  \item The spline function satisfies a best-approximation property, see \cite[Thm.\ 10.16]{Amirbekyan2008}.
\end{enumerate}
In addition, in the regularized case the following theorem holds true. 
\begin{theorem}[Spline Approximation, {\cite[Thm.\ 10.16]{Michel2013}}]\label{thm:SplineApproxReg}
  Let $g \in\mathbb{R}^\ell$ and a regularization parameter $\lambda > 0$ be given. If the vector $\alpha = (\alpha_k)_{k=1,\dots,\ell} \in \mathbb{R}^\ell$ is the solution of
  \begin{equation}\label{eq:TikReg}
    \left(\left(\mathcal{A}_{x}^l \mathcal{A}_{z}^k \left(K(x, z)\right)\right)_{l,k =1,\dots,\ell} + \lambda \mathcal{I}_{\mathbb{R}^{\ell\times\ell}} \right) \alpha = g,
  \end{equation}
  then the scalar spherical spline function corresponding to the coefficient vector $\alpha$ is the unique minimizer of the corresponding Tikhonov functional, that is
  \begin{equation*}
    S = \argmin_{F \in{\mathscr{H}}} \left(\norm[2]{g - \mathcal{A}F}^2 + \lambda \norm[{\mathscr{H}}]{F}^2\right).
  \end{equation*}
\end{theorem}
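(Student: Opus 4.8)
The plan is to exploit the explicit finite-rank structure of the spline. Since each $\mathcal{A}^k$ is a continuous linear functional on the RKHS $\mathscr{H}$, the Riesz representation theorem gives representers $R_k \in \mathscr{H}$ with $\mathcal{A}^k F = \scalar[\mathscr{H}]{F}{R_k}$; in fact the reproducing property yields $R_k = \mathcal{A}^k_z K(\cdot, z)$. Thus the spline ansatz \eqref{eq:DefiScalarSpline} says $S = \sum_{k=1}^\ell \alpha_k R_k$, i.e.\ $S$ lies in the finite-dimensional span $V \coloneqq \mathrm{span}\{R_1, \dots, R_\ell\}$. The Gram matrix of the representers is exactly $\mathbf{K} \coloneqq (\mathcal{A}^l_x \mathcal{A}^k_z K(x,z))_{l,k}$, since $\scalar[\mathscr{H}]{R_k}{R_l} = \mathcal{A}^l R_k = \mathcal{A}^l_x \mathcal{A}^k_z K(x,z)$. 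The first step is to record these identifications and note that, for any $F \in \mathscr{H}$, the data vector is $\mathcal{A}F = (\scalar[\mathscr{H}]{F}{R_k})_k = \mathbf{K}^{1/2}$-type expressions; more usefully, writing $F = v + w$ with $v \in V$ and $w \perp V$, we have $\mathcal{A}F = \mathcal{A}v$ and $\norm[{\mathscr{H}}]{F}^2 = \norm[{\mathscr{H}}]{v}^2 + \norm[{\mathscr{H}}]{w}^2$.

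The second step is the standard reduction: for fixed data fit, the norm penalty is minimized by taking $w = 0$, so the Tikhonov minimizer must lie in $V$. Hence it suffices to minimize over $F = \sum_k c_k R_k$, $c \in \mathbb{R}^\ell$. Substituting, $\mathcal{A}F = \mathbf{K} c$ and $\norm[{\mathscr{H}}]{F}^2 = c^\top \mathbf{K} c$, so the functional becomes $\Phi(c) = \norm[2]{g - \mathbf{K}c}^2 + \lambda\, c^\top \mathbf{K} c$, a convex quadratic in $c$. Setting its gradient to zero gives the normal equation $\mathbf{K}(\mathbf{K} + \lambda \mathcal{I})c = \mathbf{K} g$. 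The third step is to relate this to \eqref{eq:TikReg}: if $\alpha$ solves $(\mathbf{K} + \lambda \mathcal{I})\alpha = g$, then $c = \alpha$ solves the normal equation (multiply by $\mathbf{K}$). Since $\lambda > 0$ makes $\mathbf{K} + \lambda \mathcal{I}$ positive definite, $\alpha$ exists and is unique; and because $\Phi$ is strictly convex in the fiber directions where $\mathbf{K}$ is definite and the penalty $c^\top \mathbf{K} c$ together with the data term pins down the rest, the minimizer $S = \sum_k \alpha_k R_k$ is unique in $\mathscr{H}$. One should check the degenerate case where $\mathbf{K}$ is only positive semidefinite (linearly dependent functionals): then $c$ is not unique but $\mathbf{K}c = \mathbf{K}\alpha$ is, so $S$ and the value $\Phi$ are still unique, and picking the solution $\alpha$ of $(\mathbf{K}+\lambda\mathcal{I})\alpha = g$ is a canonical choice.

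The main obstacle, such as it is, is the orthogonal-decomposition argument showing the minimizer lands in $V$: one must verify that the splitting $F = v + w$ is legitimate in $\mathscr{H}$ (i.e.\ $V$ is closed, which is automatic since it is finite-dimensional) and that the functionals and the norm genuinely decouple as claimed — this uses that the $R_k$ are the Riesz representers, so $w \perp V$ really does imply $\mathcal{A}^k w = \scalar[\mathscr{H}]{w}{R_k} = 0$. Everything else is linear algebra. An alternative, perhaps cleaner, route is to invoke Theorem~\ref{thm:SplineApproxReg} as already proved in \cite{Michel2013} — but since the statement here is precisely that theorem, the intended proof is presumably the self-contained representer-theorem argument sketched above, and I would present it in that order: Riesz representers and the Gram matrix identity, then the reduction to $V$, then the quadratic minimization and its normal equation, then matching with \eqref{eq:TikReg} and concluding uniqueness.
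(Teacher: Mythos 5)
Your argument is correct, but it is not the route taken here: the paper itself states Thm.~\ref{thm:SplineApproxReg} with a citation to \cite[Thm.\ 10.16]{Michel2013} and gives no proof of the scalar statement; the in-paper proof to compare with is that of the vector analogue, Thm.~\ref{thm:RegSolSplineVec}, which carries over verbatim to the scalar case. That proof never decomposes $F$ into a component in $V=\operatorname{span}\{R_1,\dots,R_\ell\}$ plus an orthogonal remainder. Instead it starts from \eqref{eq:TikReg} (equivalently \eqref{eq:SplineMatrixVec}), applies $\mathcal{A}^\ast$ to it, and uses the spline reproducing relation $\scalar[\mathscr{H}]{s}{f}=\sum_k\alpha_k\mathcal{A}^k f$ (item (3) of Lem.~\ref{lem:VecSpline}) to show that $\mathcal{A}^\ast g-\mathcal{A}^\ast\mathcal{A}s-\lambda s$ is orthogonal to every $f-s$; it then expands $\norm[2]{g-\mathcal{A}f}^2+\lambda\norm[\mathscr{H}]{f}^2$ around $s$ and reads off the inequality, with equality only for $f=s$. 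Your representer-theorem argument buys something the paper's does not: it explains where \eqref{eq:TikReg} comes from (a canonical solution of the normal equation $\mathbf{K}(\mathbf{K}+\lambda\mathcal{I})c=\mathbf{K}g$ on $V$), and it handles linearly dependent functionals, whereas the paper's vector version explicitly assumes linear independence to get positive definiteness of the Gram matrix. The paper's argument, in turn, is shorter, needs no projection, and verifies optimality directly from the given linear system — which is exactly how it is reused in the vector setting. Two small polish points on your write-up: the clause ``$\mathcal{A}F=(\scalar[\mathscr{H}]{F}{R_k})_k=\mathbf{K}^{1/2}$-type expressions'' is garbled and should be deleted, and uniqueness is most cleanly obtained not from ``strict convexity in the fiber directions where $\mathbf{K}$ is definite'' but simply from the fact that $\lambda\norm[\mathscr{H}]{F}^2$ with $\lambda>0$ makes the whole Tikhonov functional strictly convex on $\mathscr{H}$, so it has at most one minimizer, which your explicit construction then exhibits.
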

Note that $\mathcal{I}_{\mathbb{R}^{\ell\times\ell}}$ stands for the $\ell\times\ell$-identity matrix.

\section{Functional Vector-valued Reproducing Kernel Hilbert Space Splines on the Ball}\label{sec:RKHSball}
In the previous section, we recapitulated that the scalar reproducing kernel based spline approximation has several advantages.
On the one hand, we have the interpolation and best-approximation properties. 
On the other hand, by means of the spline approximation the problem of minimizing the regularized Tikhonov-Philips functional over an infinite-dimensional Hilbert space reduces to solving a finite dimensional system of linear equations without loss of information.
We want to conserve these two outstanding properties for our vector-valued RKHS splines.
In order to do so, we start with the construction of vector-valued Sobolev spaces over the ball, which is based on vector Sobolev spaces on the sphere, \cite{Freeden1993}, and scalar ones on the ball, \cite{Michel2013,Akram2011,Michel2015}.

\subsection{Vector Sobolev Spaces on the Ball}

As in the scalar-valued case, we want to construct vector splines based on reproducing kernels over the ball. 
This method can easily be transferred to arbitrary Hilbert spaces if a complete orthonormal system therein is known. 

Based on \cite[Ch.\ 12.4]{Freeden1998} and the idea of the spherical Helmholtz decomposition, we split the space of all vector-valued arbitrarily often continuously differentiable functions on the ball into three $\scalar[{\mathrm{L}^2(B_R,\mathbb{R}^3)}]{\cdot}{\cdot}$-orthogonal subspaces. 
Due to knowledge of the null spaces of the operators related to the functional inverse MEG and EEG problem, this decomposition reflects the structure of the neuronal current. 
They are based on the spaces
\begin{equation*}  
  \mathrm{C}^{(i),\infty}(B_R,\mathbb{R}^3) \coloneqq \set{f \in \mathrm{C}^{\infty}(B_R,\mathbb{R}^3) \;\middle|\; \tilde{O}^{(\iota)}_{\xi}f(x) = 0\ \text{ if } \{1,2,3\} \ni \iota \neq i}, \qquad i \in \{1,2,3\},
\end{equation*}
where $\tilde{O}^{(i)}$ is the adjoint operator of $\tilde{o}^{(i)}$ (with respect to the $\mathrm{L}^2(B_R,\mathbb{R}^3)$-norm).
However, for the construction of vector-valued splines, it is only necessary that the three directions are $\mathrm{L}^2(B_R,\mathbb{R}^3)$-orthogonal.
Now, let one direction indicated by the superscript $i \in \{1,2,3\}$ be arbitrary but fix. 
For each subspace, we construct a family of orthonormal basis function $\{h_{m,n,j}^{(i)}\}_{m,n,j}$ which can be separated into a radial and an angular part.
Note that for our particular application the orthonormal basis functions are given in Thm.~\ref{thm:ONBBall}. 
In addition, we assume that the estimate
\begin{equation*}
  \sup_{x \in B_R}\sum_{j=1}^{2n+1} \abs{h_{m,n,j}^{(i)}(x)}^2 \eqqcolon B_{m,n}^{(i)} < \infty
\end{equation*}
holds true for every $i\in\{1,2,3\}$, $m \in \mathbb{N}_0$, and $n \in \mathbb{N}_{0_i}$.

\begin{definition}[Vector Sobolev Space]\label{defi:SobolevSpaceBall}
  Let $R>0$ be a given radius, $i\in\{1,2,3\}$ be fixed, and let $a^{(i)} \coloneqq (a_{m,n}^{(i)})_{m,n}$ be a given real sequence. We define a functional $\mathscr{E}^{(i)}\colon \mathrm{C}^{(i),\infty}(B_R,\mathbb{R}^3) \to \mathbb{R}$ by
  \begin{equation*}
    \mathscr{E}^{(i)}(f) \coloneqq \sum_{(m,n) \in\mathbb{N}_0\times\mathbb{N}_{0_i} } \sum_{j=1}^{2n+1} \left(a_{m,n}^{(i)}\right)^2 \scalar[{\mathrm{L}^2(B_R,\mathbb{R}^3)}]{f}{{h}_{m,n,j}^{(i)}}^2
  \end{equation*}
  and, consequently, the space $\mathscr{E}^{(i)}(a^{(i)}, B_R)$ is given by
  \begin{equation*}
    \mathscr{E}^{(i)}(a^{(i)}, B_R) \coloneqq \set{f \in \mathrm{C}^{(i),\infty}(B_R,\mathbb{R}^3) \;\middle|\; \mathscr{E}^{(i)}(f) < \infty \text{ and } \scalar[{\mathrm{L}^2(B_R,\mathbb{R}^3)}]{f}{{h}_{m,n,j}^{(i)}} = 0 \text{ if } a_{m,n}^{(i)} = 0}.
  \end{equation*}
  The space is equipped with the inner product defined for all $f$, $g \in \mathscr{E}^{(i)}(a^{(i)}, B_R)$ by
  \begin{equation*}
    \scalar[\mathscr{H}^{(i)}]{f}{g} \coloneqq \sum_{(m,n) \in\mathbb{N}_0\times\mathbb{N}_{0_i} } \sum_{j=1}^{2n+1} {\left(a_{m,n}^{(i)}\right)}^2 \scalar[{\mathrm{L}^2(B_R,\mathbb{R}^3)}]{f}{h^{(i)}_{m,n,j}}\scalar[{\mathrm{L}^2(B_R,\mathbb{R}^3)}]{g}{h^{(i)}_{m,n,j}}.
  \end{equation*}
  We call the completion of $\mathscr{E}^{(i)}(a^{(i)}, B_R)$ with respect to $\norm[\mathscr{H}^{(i)}]{\cdot}$ the \emph{Sobolev space} $\mathscr{H}^{(i)}(a^{(i)}, B_R)$.
  In addition, we define with $a \coloneqq (a^{(1)}, a^{(2)}, a^{(3)})$ the space
  \begin{equation*}
    \mathscr{H} \coloneqq \mathscr{H}(a, B_R,\mathbb{R}^3) \coloneqq \bigoplus_{i=1}^3\mathscr{H}^{(i)}(a^{(i)}, B_R).
  \end{equation*}
\end{definition}
As in the scalar case, the smoothness of the functions $f \in \mathscr{H}$ is closely related to the behavior of the generating sequence $a$. 
The increasing behavior of the sequence needs to be compensated by a proportionate decay of the generalized Fourier coefficients of the functions.
This has consequences for the smoothness of the functions. 
Accordingly, there exists an inclusion of two different Hilbert spaces. 
\begin{corollary}\label{cor:InclusionSobolev}
  Let $i\in\{1, 2, 3\}$ be arbitrary. Let $a^{(i)} \coloneqq (a_{m,n}^{(i)})_{m,n}$ and $b^{(i)} \coloneqq (b_{m,n}^{(i)})_{m,n}$ be two real sequences with 
  $|a_{m,n}^{(i)}| \leq |b_{m,n}^{(i)}|$ for all $m\in\mathbb{N}_0$ and $n\in\mathbb{N}_{0_i}$. 
  Then
  \begin{equation*}
    \mathscr{H}^{(i)}\left(b^{(i)}, {B_R}\right) \subset \mathscr{H}^{(i)}\left(a^{(i)}, B_R\right).
  \end{equation*}
\end{corollary}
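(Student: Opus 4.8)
The plan is to compare the two spaces via their generalized Fourier coefficients, exploiting that both norms are weighted $\ell^2$-norms of those coefficients. First I would take an arbitrary $f \in \mathscr{H}^{(i)}(b^{(i)}, B_R)$ and, using the definition of the Sobolev space, approximate it by a sequence $(f_k) \subset \mathscr{E}^{(i)}(b^{(i)}, B_R)$ converging to $f$ in $\norm[\mathscr{H}^{(i)}(b^{(i)})]{\cdot}$. For each $f_k$, the Fourier coefficients $c_{m,n,j}(f_k) \coloneqq \scalar[{\mathrm{L}^2(B_R,\mathbb{R}^3)}]{f_k}{h^{(i)}_{m,n,j}}$ vanish whenever $b_{m,n}^{(i)} = 0$; since $|a_{m,n}^{(i)}| \le |b_{m,n}^{(i)}|$, the set $\{(m,n) : a_{m,n}^{(i)} = 0\}$ contains $\{(m,n) : b_{m,n}^{(i)} = 0\}$, but crucially the converse need not hold, so one must check the membership condition in the definition of $\mathscr{E}^{(i)}(a^{(i)}, B_R)$ carefully — this is where the hypothesis does its work.

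The core estimate is the pointwise inequality $(a_{m,n}^{(i)})^2 |c_{m,n,j}|^2 \le (b_{m,n}^{(i)})^2 |c_{m,n,j}|^2$, which summed over $m$, $n$, $j$ gives $\mathscr{E}^{(i)}(f_k) \le \norm[\mathscr{H}^{(i)}(b^{(i)})]{f_k}^2$ and, more generally, $\norm[\mathscr{H}^{(i)}(a^{(i)})]{f_k - f_{k'}} \le \norm[\mathscr{H}^{(i)}(b^{(i)})]{f_k - f_{k'}}$. Hence $(f_k)$ is Cauchy in $\norm[\mathscr{H}^{(i)}(a^{(i)})]{\cdot}$ as well, so it converges to some $\tilde f \in \mathscr{H}^{(i)}(a^{(i)}, B_R)$. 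Then I would argue $\tilde f = f$: both are $\mathrm{L}^2$-limits of $(f_k)$ (since convergence in either Sobolev norm dominates convergence of each individual Fourier coefficient, and in fact dominates $\mathrm{L}^2$-convergence when the sequences $a^{(i)}$, $b^{(i)}$ are bounded below away from zero on their support — if not, one instead compares Fourier coefficients directly), so they agree as elements of $\mathrm{L}^2(B_R,\mathbb{R}^3)$ and therefore as elements of the Sobolev space. This yields $f \in \mathscr{H}^{(i)}(a^{(i)}, B_R)$ and the inclusion follows.

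I expect the main obstacle to be the bookkeeping around the completion: elements of $\mathscr{H}^{(i)}(a^{(i)}, B_R)$ are not a priori smooth functions but equivalence classes of Cauchy sequences, so identifying the limit $\tilde f$ with the original $f$ requires a clean statement that the Sobolev inner product is recovered from the $\mathrm{L}^2$-Fourier coefficients and that these coefficients are well-defined on the completion. The most economical route is to set up, once and for all, an isometric isomorphism between $\mathscr{H}^{(i)}(a^{(i)}, B_R)$ and the weighted sequence space $\{(c_{m,n,j}) : \sum (a_{m,n}^{(i)})^2 |c_{m,n,j}|^2 < \infty,\ c_{m,n,j} = 0 \text{ if } a_{m,n}^{(i)} = 0\}$, and likewise for $b^{(i)}$; then the inclusion is the trivial inclusion of weighted $\ell^2$-spaces, and the only thing to verify is that the two identifications are compatible on the common dense subspace $\mathscr{E}^{(i)}(b^{(i)}, B_R) \subset \mathscr{E}^{(i)}(a^{(i)}, B_R)$, which is immediate. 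This mirrors the scalar argument underlying the RKHS construction of \cite{Freeden1981b, Michel2013}, so I would cite that parallel rather than reprove it in full.
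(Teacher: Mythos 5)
Your overall strategy -- dominate the $a$-weighted norm by the $b$-weighted norm on the dense subspace, then do the completion bookkeeping, or more cleanly identify each space $\mathscr{H}^{(i)}(a^{(i)},B_R)$ with a weighted $\ell^2$ sequence space -- is exactly the standard argument, and it is evidently what the paper has in mind: the statement is given as a corollary of Definition~\ref{defi:SobolevSpaceBall} with no proof at all, so there is no written proof to compare against beyond this folklore route.

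There is, however, a genuine gap at the degenerate indices, and it sits precisely at the two places where you wave it away (``this is where the hypothesis does its work'' and ``which is immediate''). The hypothesis $|a_{m,n}^{(i)}|\le|b_{m,n}^{(i)}|$ gives $b_{m,n}^{(i)}=0\Rightarrow a_{m,n}^{(i)}=0$, i.e.\ $\{(m,n):a_{m,n}^{(i)}=0\}\supseteq\{(m,n):b_{m,n}^{(i)}=0\}$, which is the \emph{wrong} direction for the membership condition: to lie in $\mathscr{E}^{(i)}(a^{(i)},B_R)$ a function must have vanishing coefficients on the \emph{larger} set $\{a_{m,n}^{(i)}=0\}$, and knowing they vanish on $\{b_{m,n}^{(i)}=0\}$ does not give this. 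Concretely, if $a_{m,n}^{(i)}=0$ but $b_{m,n}^{(i)}\neq 0$, then $h_{m,n,j}^{(i)}$ belongs to $\mathscr{E}^{(i)}(b^{(i)},B_R)\subset\mathscr{H}^{(i)}(b^{(i)},B_R)$, yet it is not in $\mathscr{E}^{(i)}(a^{(i)},B_R)$, and under your own sequence-space identification it is not in $\mathscr{H}^{(i)}(a^{(i)},B_R)$ either, since its only nonzero coefficient sits at an index excluded from the $a$-weighted space. So your claimed inclusion $\mathscr{E}^{(i)}(b^{(i)},B_R)\subset\mathscr{E}^{(i)}(a^{(i)},B_R)$ is the one non-trivial point and is false in general; the argument (and the corollary as literally stated) only goes through under the extra assumption that $a_{m,n}^{(i)}=0$ implies $b_{m,n}^{(i)}=0$, i.e.\ that the two sequences have the same zero set, or after restricting to the subspace of $\mathscr{H}^{(i)}(b^{(i)},B_R)$ whose coefficients vanish on $\{a_{m,n}^{(i)}=0\}$. (In the paper's application this is harmless, since the kernel sequences $\kappa^{(2)},\kappa^{(3)}$ all have the same zero pattern $\kappa_{m,n}^{(i)}=\kappa_n^{(i)}\delta_{m,0}$ with $\kappa_0^{(i)}=0$, but your proof should state the extra condition or handle the degenerate indices explicitly rather than call the step immediate.) The norm estimate and the Cauchy-sequence/identification part of your write-up are fine.
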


\begin{definition}[Summability]\label{defi:sumSequence}
  Let $i\in\{1, 2, 3\}$ be arbitrary. 
  Let $a^{(i)} \coloneqq (a_{m,n}^{(i)})_{m,n}$ and $b^{(i)} \coloneqq (b_{m,n}^{(i)})_{m,n}$ be two given real sequences. The sequence $a^{(i)}$ is said to be  \emph{$b^{(i)}$-summable}
  if
  \begin{equation*}
    \sum_{\substack{ (m,n) \in\mathbb{N}_0\times \mathbb{N}_{0_i}\\ a_{m,n}^{(i)} \neq 0}} \left(\frac{b_{m,n}^{(i)}}{a_{m,n}^{(i)}}\right)^{2} \left(B_{m,n}^{(i)}\right)^2 < \infty.
  \end{equation*}
  If each element of the sequence $b^{(i)}$ is equal to one, $a^{(i)}$ is said to be \emph{summable}.
\end{definition}
Note that the summability condition is closely related to a bound for the orthonormal basis functions. Thus, if another orthonormal system is used for the construction of the vector Sobolev space, then this summability condition needs to be adapted.

\begin{theorem}\label{thm:SobolevEmbedding}
  Let for each $i\in\{1, 2, 3\}$ the sequence $a^{(i)}$ be $b^{(i)}$-summable, where $b_{m,n}^{(i)} \neq 0$ for all $m\in\mathbb{N}_0$ and $n\in\mathbb{N}_{0_i}$. 
  Then each function  $f \in \mathscr{H}(a/b, B_R,\mathbb{R}^3)$ is also continuous and has a uniformly convergent expansions in the basis $\{h_{m,n,j}^{(i)}\}$. 
\end{theorem}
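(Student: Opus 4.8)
The plan is to prove a Sobolev-type embedding by the classical reproducing-kernel Cauchy--Schwarz argument, carried out separately on each of the three $\mathrm{L}^2(B_R,\mathbb{R}^3)$-orthogonal summands of $\mathscr{H}(a/b,B_R,\mathbb{R}^3)$ from Definition~\ref{defi:SobolevSpaceBall}. Fix $i\in\{1,2,3\}$ and abbreviate $c^{(i)}\coloneqq(a_{m,n}^{(i)}/b_{m,n}^{(i)})_{m,n}$, so that the $i$-th summand is $\mathscr{H}^{(i)}(c^{(i)},B_R)$, whose inner product carries the weights $(c_{m,n}^{(i)})^{2}$. By Definition~\ref{defi:sumSequence} the hypothesis that $a^{(i)}$ be $b^{(i)}$-summable reads precisely $\sum_{c_{m,n}^{(i)}\neq 0}(c_{m,n}^{(i)})^{-2}(B_{m,n}^{(i)})^{2}<\infty$; since for the basis of Theorem~\ref{thm:ONBBall} one has $B_{m,n}^{(i)}\geq c_R$ with a constant $c_R>0$ depending only on $R$ (evaluate $\sum_{j}\abs{h_{m,n,j}^{(i)}(\cdot)}^{2}$ at $r=R$, using \eqref{eq:AddThmAbs} and $P_m^{(0,\beta)}(1)=1$), this in particular yields $(C^{(i)})^{2}\coloneqq\sum_{c_{m,n}^{(i)}\neq 0}(c_{m,n}^{(i)})^{-2}B_{m,n}^{(i)}<\infty$, which is the quantity the estimate below actually uses.

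First I would establish the embedding on the dense subspace $\mathscr{E}^{(i)}(c^{(i)},B_R)$. For $g$ in this space set $\widehat g_{m,n,j}\coloneqq\scalar[{\mathrm{L}^2(B_R,\mathbb{R}^3)}]{g}{h_{m,n,j}^{(i)}}$, so that $g=\sum_{m,n,j}\widehat g_{m,n,j}\,h_{m,n,j}^{(i)}$ with $\widehat g_{m,n,j}=0$ whenever $c_{m,n}^{(i)}=0$. For every $x\in B_R$, Cauchy--Schwarz in $j$ together with $\sum_{j=1}^{2n+1}\abs{h_{m,n,j}^{(i)}(x)}^{2}\leq B_{m,n}^{(i)}$ gives
\begin{equation*}
  \abs{\sum_{j=1}^{2n+1}\widehat g_{m,n,j}\,h_{m,n,j}^{(i)}(x)} \leq \left(\sum_{j=1}^{2n+1}\abs{\widehat g_{m,n,j}}^{2}\right)^{1/2}\bigl(B_{m,n}^{(i)}\bigr)^{1/2},
\end{equation*}
and a second Cauchy--Schwarz in $(m,n)$, after inserting the factors $c_{m,n}^{(i)}$ and $(c_{m,n}^{(i)})^{-1}$, yields
\begin{multline*}
  \abs{g(x)} \leq \left(\sum_{(m,n)}\bigl(c_{m,n}^{(i)}\bigr)^{2}\sum_{j=1}^{2n+1}\abs{\widehat g_{m,n,j}}^{2}\right)^{1/2}\left(\sum_{c_{m,n}^{(i)}\neq 0}\bigl(c_{m,n}^{(i)}\bigr)^{-2}B_{m,n}^{(i)}\right)^{1/2} \\ = C^{(i)}\,\norm[\mathscr{H}^{(i)}]{g}.
\end{multline*}
Hence the identity is a bounded linear map from $\mathscr{E}^{(i)}(c^{(i)},B_R)$, carrying the $\mathscr{H}^{(i)}$-norm, into the Banach space of bounded continuous $\mathbb{R}^3$-valued functions on $B_R$ with the supremum norm; since the target is complete, this map extends uniquely to all of $\mathscr{H}^{(i)}(c^{(i)},B_R)$, so every $f$ in that space has a bounded continuous representative with $\norm[\infty]{f}\leq C^{(i)}\norm[\mathscr{H}^{(i)}]{f}$.

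Next I would upgrade this to uniform convergence of the expansion. For $f\in\mathscr{H}^{(i)}(c^{(i)},B_R)$ and a finite index set $\Lambda\subset\mathbb{N}_0\times\mathbb{N}_{0_i}$ the partial sum $s_\Lambda\coloneqq\sum_{(m,n)\in\Lambda}\sum_{j=1}^{2n+1}\widehat f_{m,n,j}\,h_{m,n,j}^{(i)}$ lies in $\mathscr{E}^{(i)}(c^{(i)},B_R)$, and applying the estimate just proven to $s_\Lambda-s_{\Lambda'}$ bounds $\norm[\infty]{s_\Lambda-s_{\Lambda'}}$ by $C^{(i)}\bigl(\sum_{(m,n)\in\Lambda\,\triangle\,\Lambda'}(c_{m,n}^{(i)})^{2}\sum_{j}\abs{\widehat f_{m,n,j}}^{2}\bigr)^{1/2}$. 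Because $\norm[\mathscr{H}^{(i)}]{f}^{2}=\sum_{m,n,j}(c_{m,n}^{(i)})^{2}\abs{\widehat f_{m,n,j}}^{2}<\infty$, this tail tends to $0$ as $\Lambda,\Lambda'$ exhaust the index set, so the net of partial sums is uniformly Cauchy and converges uniformly to a continuous limit $\widetilde f$. Uniform convergence over the bounded set $B_R$ forces $\mathrm{L}^2$-convergence, and since $s_\Lambda\to f$ in $\mathrm{L}^2(B_R,\mathbb{R}^3)$ as well — the $(h_{m,n,j}^{(i)})$ being the relevant complete orthonormal system and the $\widehat f_{m,n,j}$ the corresponding $\mathrm{L}^2$-coefficients of $f$, which agree with the coefficients entering Definition~\ref{defi:SobolevSpaceBall} thanks to the continuous embedding $\mathscr{H}^{(i)}\hookrightarrow\mathrm{L}^2(B_R,\mathbb{R}^3)$ just obtained — we conclude $\widetilde f=f$. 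Writing $f=f^{(1)}+f^{(2)}+f^{(3)}$ according to Definition~\ref{defi:SobolevSpaceBall} and adding the three continuous, uniformly convergent expansions completes the argument.

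The proof is essentially a routine application of the reproducing-kernel machinery; the step that carries the argument is the two-fold Cauchy--Schwarz estimate of the second paragraph (the vector analogue of the scalar reproducing-kernel trick). The points that need care are the bookkeeping — the summability hypothesis is phrased with $(B_{m,n}^{(i)})^{2}$ whereas the estimate needs $\sum(c_{m,n}^{(i)})^{-2}B_{m,n}^{(i)}$, which is why I would record the positive lower bound on $B_{m,n}^{(i)}$ for the concrete basis at the very start — and the passage to the completion, where one must check that the abstract ``Fourier coefficients'' of an element of $\mathscr{H}^{(i)}$ coincide with its genuine $\mathrm{L}^2$-coefficients; both are handled as indicated above.
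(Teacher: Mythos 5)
Your proof is correct and rests on exactly the same two-fold Cauchy--Schwarz estimate against the summability condition that the paper's proof uses; the additional scaffolding (sup-norm bound on the dense subspace $\mathscr{E}^{(i)}$, extension to the completion, uniformly Cauchy partial sums and identification of the uniform limit with $f$ via the $\mathrm{L}^2$-coefficients) is a more careful packaging of the same argument rather than a different route. Your side remark that the hypothesis carries $\left(B_{m,n}^{(i)}\right)^2$ while the pointwise estimate only needs $\sum \left(b_{m,n}^{(i)}/a_{m,n}^{(i)}\right)^2 B_{m,n}^{(i)}$ --- handled via the uniform positive lower bound on $B_{m,n}^{(i)}$ for the concrete basis --- is a legitimate tightening of a point the paper glosses over, since its own chain bounds $\sum_{j}\abs{h_{m,n,j}^{(i)}(x)}^2$ by $\left(B_{m,n}^{(i)}\right)^2$ instead of $B_{m,n}^{(i)}$.
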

In this context, the quotient $a^{(i)}/b^{(i)}$ is understood as the element-wise division of the sequences $a^{(i)}$ and $b^{(i)}$, that is $a^{(i)}/b^{(i)} \coloneqq (a_{m,n}^{(i)}/b_{m,n}^{(i)})_{m,n}$.
\begin{proof}[Proof of Thm.~\ref{thm:SobolevEmbedding}]
  Each function $f \in \mathscr{H}(a/b, B_R,\mathbb{R}^3)$ can be represented by a Fourier series converging with respect to the corresponding $\norm[\mathscr{H}]{\cdot}$-norm.
  The next estimate proves that the iterated Fourier series also converges uniformly, due to the Cauchy-Schwarz inequality for series and the bound of the orthonormal basis functions. 
  Thus, the following estimate holds true for each $i\in\{1, 2, 3\}$ and all $x\in B_R$:
  \begingroup
  \allowdisplaybreaks
  \begin{multline*}
    \abs{\sum_{\substack{(m,n)\in\mathbb{N}_0\times\mathbb{N}_{0_i} \\m+n\geq M}}\sum_{j=1}^{2n+1} \scalar[{\mathrm{L}^2(B_R,\mathbb{R}^3)}]{f}{h_{m,n,j}^{(i)}} h_{m,n,j}^{(i)}(x)}^2 \\
    \begin{aligned}
    &\leq \left(\sum_{\substack{(m,n)\in\mathbb{N}_0\times\mathbb{N}_{0_i} \\m+n\geq M}}\sum_{j=1}^{2n+1} \left(\frac{a_{m,n}^{(i)}}{b_{m,n}^{(i)}}\right)^2 \scalar[{\mathrm{L}^2(B_R,\mathbb{R}^3)}]{f}{h_{m,n,j}^{(i)}}^2\right)\left(\sum_{\substack{(m,n)\in\mathbb{N}_0\times\mathbb{N}_{0_i} \\m+n\geq M}}\left(\frac{b_{m,n}^{(i)}}{a_{m,n}^{(i)}}\right)^{2} \sum_{j=1}^{2n+1} \abs{ h_{m,n,j}^{(i)}(x)}^2\right) \\
    &\leq \norm[\mathscr{H}]{f}^2 \left(\sum_{\substack{(m,n)\in\mathbb{N}_0\times\mathbb{N}_{0_i} \\m+n\geq M}}\left(\frac{b_{m,n}^{(i)}}{a_{m,n}^{(i)}}\right)^{2} \left(B_{m,n}^{(i)}\right)^2 \right) 
    \end{aligned}
  \end{multline*}%
  \endgroup
  The right-hand side converges to zero as $M\to \infty$ due to the summability condition. 
  Hence, this iterated series converges uniformly and each summand is continuous in $B_R$ as assumed. 
  This estimate also implies the absolute convergence of the iterated series. 
  Due to Cauchy's theorem of double series, the absolute convergence of the iterated series suffices for the absolute convergence of the double series and their limits coincide.
\end{proof}

\subsection{Vector Splines on the Ball}

For the construction of vector-valued reproducing kernel based splines, we first need to construct appropriate reproducing kernels.
For this method, they are constructed via the tensor product of vector-valued orthonormal basis functions. This approach has already been used in \cite{Freeden1998,Freeden1993} for vector-valued splines on the unit sphere. 
Thus, the reproducing kernel $\mathfrak{k}^{(i)} \colon B_{R} \times B_{R} \to \mathbb{R}^{3 \times 3}$ considered in this section depends on $i\in\{1, 2, 3\}$ and is defined by
\begin{equation}\label{eq:TensRepKer}
  \mathfrak{k}^{(i)}(x, y) \coloneqq \sum_{\substack{(m,n)\in\mathbb{N}_0\times\mathbb{N}_{0_i} \\ \kappa_{m,n}^{(i)} \neq 0}} \sum_{j=1}^{2n+1} \left(\kappa_{m,n}^{(i)}\right)^{-2} h_{m,n,j}^{(i)}(x) \otimes h_{m,n,j}^{(i)}(y), \qquad x,\, y \in B_R.
\end{equation}
Reproducing kernels of this kind are also introduced in \cite{Bayer2000,Beth2000} and used for the construction of interpolating vector splines and vector-valued wavelets on the sphere. 
The Sobolev space $\mathscr{H}^{(i)}((\kappa_{m,n}^{(i)})_{m,n}, B_R, \mathbb{R}^3)$ is the natural Sobolev space containing the reproducing kernel with argument fixed, that is $\mathfrak{k}^{(i)}(x, \cdot) \in \mathscr{H}^{(i)}((\kappa_{m,n}^{(i)})_{m,n}, B_R, \mathbb{R}^3)$ and $\mathfrak{k}^{(i)}(\cdot, y) \in \mathscr{H}^{(i)}((\kappa_{m,n}^{(i)})_{m,n}, B_R, \mathbb{R}^3)$ for all $x,\ y \in B_R$.
\begin{lemma}
	Let $i\in\{1,2,3\}$ be fixed. If the sequence $(\kappa_{m,n}^{(i)})_{m,n}$ satisfies the summability condition, Def.~\ref{defi:sumSequence}, the series in \eqref{eq:TensRepKer} converges pointwise with respect to the $2$-norm of matrices.
\end{lemma}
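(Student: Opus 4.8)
The plan is to prove the lemma by establishing, for each fixed pair $x,\,y\in B_R$, the \emph{absolute} convergence of the series in \eqref{eq:TensRepKer} in the normed space $(\mathbb{R}^{3\times3},\norm[2]{\cdot})$; since this space is finite-dimensional and hence complete, absolute convergence yields convergence of the series and, moreover, independence of the result from the order of summation over the index pairs $(m,n)$ with $\kappa_{m,n}^{(i)}\neq0$ and the indices $j=1,\dots,2n+1$. The single ingredient that is specific to the matrix-valued situation is the elementary identity $\norm[2]{u\otimes v}=\abs{u}\,\abs{v}$ for $u,\,v\in\mathbb{R}^3$, which I would record first: since $(u\otimes v)w=(v\cdot w)\,u$, the spectral norm equals $\sup_{\abs{w}\leq1}\abs{v\cdot w}\,\abs{u}=\abs{u}\,\abs{v}$ (and the Frobenius norm of $u\otimes v$ has the same value), so the matrix series is dominated term by term by a scalar one.

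Concretely, I would bound the $\norm[2]{\cdot}$ of the tail of \eqref{eq:TensRepKer} consisting of the terms with $m+n\geq M$: by the triangle inequality for $\norm[2]{\cdot}$ and the outer-product identity, it is at most $\sum_{\kappa_{m,n}^{(i)}\neq0,\ m+n\geq M}(\kappa_{m,n}^{(i)})^{-2}\sum_{j=1}^{2n+1}\abs{h_{m,n,j}^{(i)}(x)}\,\abs{h_{m,n,j}^{(i)}(y)}$. Next I would apply the Cauchy--Schwarz inequality in $\mathbb{R}^{2n+1}$ to the inner sum over $j$ and invoke the defining bound of $B_{m,n}^{(i)}$ once for the argument $x$ and once for $y$:
\begin{equation*}
  \sum_{j=1}^{2n+1}\abs{h_{m,n,j}^{(i)}(x)}\,\abs{h_{m,n,j}^{(i)}(y)}\;\leq\;\left(\sum_{j=1}^{2n+1}\abs{h_{m,n,j}^{(i)}(x)}^2\right)^{1/2}\left(\sum_{j=1}^{2n+1}\abs{h_{m,n,j}^{(i)}(y)}^2\right)^{1/2}\;\leq\;B_{m,n}^{(i)}.
\end{equation*}
This reduces the whole tail to $\sum_{\kappa_{m,n}^{(i)}\neq0,\ m+n\geq M}(\kappa_{m,n}^{(i)})^{-2}B_{m,n}^{(i)}$, which is the remainder of a convergent series by the summability hypothesis of Def.~\ref{defi:sumSequence}; letting $M\to\infty$ then gives absolute convergence, hence the claim. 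This mirrors the scalar summability discussion recalled in Sec.~\ref{sec:Prelim} and the tail estimate carried out in the proof of Thm.~\ref{thm:SobolevEmbedding}.

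I do not anticipate a genuine obstacle: the statement is the matrix-valued counterpart of the scalar summability argument, and once the rank-one identity $\norm[2]{u\otimes v}=\abs{u}\,\abs{v}$ is in hand, the rest is the triangle inequality, Cauchy--Schwarz, and an appeal to the hypothesis. The only place needing a line of care is reconciling the power of $B_{m,n}^{(i)}$ produced by the estimate above with the form of the summability condition in Def.~\ref{defi:sumSequence} (which is phrased with $(B_{m,n}^{(i)})^2$) -- the same bookkeeping point already handled in the proof of Thm.~\ref{thm:SobolevEmbedding}.
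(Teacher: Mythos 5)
Your proposal is correct and follows essentially the same route as the paper's proof: bound the rank-one outer products via $\lVert u\otimes v\rVert_2\leq\lvert u\rvert\,\lvert v\rvert$, apply Cauchy--Schwarz to the sum over $j$, invoke the bound $B_{m,n}^{(i)}$, and conclude with the summability condition of Def.~\ref{defi:sumSequence}. Your explicit remarks on completeness of $(\mathbb{R}^{3\times3},\lVert\cdot\rVert_2)$ and on reconciling the power of $B_{m,n}^{(i)}$ with the $(B_{m,n}^{(i)})^2$ appearing in the summability condition only make explicit bookkeeping that the paper's proof passes over silently, so there is no substantive difference.
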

\begin{proof}
	The $2$-norm of rank-one outer products of two vectors $u$, $v\in \mathbb{R}^3$ fulfills the estimate $|u \otimes v| \coloneqq \| u \otimes v \|_2 \leq \| u \|_2 \|v \|_2$.
	Translated in our notation we achieve for the tensor product of the orthonormal basis functions
	\begin{equation*}
		\left|h_{m,n,j}^{(i)}(x) \otimes h_{m,n,j}^{(i)}(y)\right| \leq \left|h_{m,n,j}^{(i)}(x)\right|\left|h_{m,n,j}^{(i)}(y)\right|  \qquad \text{ for all } x,\ y \in B_R . 
	\end{equation*}
	Hence, for the reproducing kernel we have a convergent majorant which is given by
	\begin{align*}
	 |\mathfrak{k}^{(i)}(x, y)| 
    &\leq \sum_{\substack{(m,n)\in\mathbb{N}_0\times\mathbb{N}_{0_i} \\ \kappa_{m,n}^{(i)} \neq 0}} \left(\kappa_{m,n}^{(i)}\right)^{-2}  \sum_{j=1}^{2n+1} |h_{m,n,j}^{(i)}(x)||h_{m,n,j}^{(i)}(y)| \\
    &\leq \sum_{\substack{(m,n)\in\mathbb{N}_0\times\mathbb{N}_{0_i} \\ \kappa_{m,n}^{(i)} \neq 0}} \left(\kappa_{m,n}^{(i)}\right)^{-2}  \left(\sum_{j=1}^{2n+1} |h_{m,n,j}^{(i)}(x)|^2\right)^{1/2}\left(\sum_{j=1}^{2n+1} |h_{m,n,j}^{(i)}(y)|^2\right)^{1/2} \\
    &\leq \sum_{\substack{(m,n)\in\mathbb{N}_0\times\mathbb{N}_{0_i} \\ \kappa_{m,n}^{(i)} \neq 0}} \left(\kappa_{m,n}^{(i)}\right)^{-2} \sup_{x\in B_R} \sum_{j=1}^{2n+1} |h_{m,n,j}^{(i)}(x)|^2 
    \leq \sum_{\substack{(m,n)\in\mathbb{N}_0\times\mathbb{N}_{0_i} \\ \kappa_{m,n}^{(i)} \neq 0}}  \left(\kappa_{m,n}^{(i)}\right)^{-2} \left(B_{m,n}^{(i)}\right)^2 < \infty. \tag*{\qedhere}
  \end{align*}
\end{proof}
For more details on vector-valued reproducing kernels, such as the reproducing property, we refer to \cite{Freeden1993} since the conversion from the spherical case to the ball case is straightforward.
Furthermore, we assume that the sequence $(\kappa_{m,n}^{(i)})_{m,n}$ is given in such a way that $\mathscr{H}^{(i)} \coloneqq \mathscr{H}^{(i)}((\kappa_{m,n}^{(i)})_{m,n}, B_R) \subset \mathrm{L}^2(B_{R},\mathbb{R}^3)$.

The spline constructed in \cite{Freeden1993,Bayer2000} is used for an interpolation problem.
In contrast, we want to use the spline for approximating the solution of a (functional) inverse problem. 
Having the vector-valued reproducing kernels at hand, we develop in the following a novel approach to vector-valued splines for inverse problems.

For the construction of approximation splines for functional inverse problems, the data $g = (g_1, \dots, g_\ell) \in\mathbb{R}^\ell$ needs to be of the form
\begin{equation*}
  g = \mathcal{A} f \qquad \Leftrightarrow \qquad  g_k = \mathcal{A}^k f, \qquad k=1, \dots, \ell
\end{equation*}
with the linear and continuous operator $\mathcal{A} \coloneqq (\mathcal{A}^1, \dots, \mathcal{A}^\ell)^{\mathrm{T}}$, the functionals $\mathcal{A}^k \colon \mathscr{H}^{(i)} \to \mathbb{R}$ for all $k=1, \dots, \ell$, and the (sought) quantity $f \in \mathscr{H}^{(i)}$. 
In \cite[Ch.\ 6.4., Ch.\ 10]{Michel2013}, this method is extended to a scalar spline approximation on the ball.
Thus, we call a function of the form 
\begin{equation}\label{eq:VectorialSpline}
  s(x)= \sum_{k=1}^{\ell} \alpha_k \mathcal{A}_{z}^k \left(\mathfrak{k}^{(i)}(z, x)\right), \qquad x \in B_{R}
\end{equation}
with the coefficients $\alpha = ( \alpha_k)_{k=1,\dots,\ell} \in \mathbb{R}^\ell$ a \emph{spline function in $\mathscr{H}^{(i)}$ subject to $\mathcal{A}$}. 
The set of all these spline functions is denoted by $\mathrm{Spline}((\kappa_{m,n}^{(i)})_{m,n}, \mathcal{A})$ which is an $\ell$-dimensional space. 
For this definition to make sense, we need to define what we understand by applying the functional $\mathcal{A}^k$ to a tensor product of two vector-valued functions. 
For a tensorial function $\mathfrak{t}(z,x) = \sum_{i,j=1}^3 \mathfrak{t}_{i,j}(z,x) \varepsilon^i \otimes \varepsilon^j$ with scalar component functions $\mathfrak{t}_{i,j}$, we set
\begin{equation}\label{eq:Func2Tensor}
  \mathcal{A}_z^k \mathfrak{t}(z,x) \coloneqq \sum_{j=1}^3 \mathcal{A}_z^k \left(\sum_{i=1}^3 \mathfrak{t}_{i,j}(z,x) \varepsilon^i \right) \varepsilon^j,
\end{equation}
where $\{\varepsilon^i\}_{i=1,2,3}$ is the standard basis of $\mathbb{R}^3$.
Eventually, the vector-valued spline function has the representation 
\begin{equation}\label{eq:defiSpline}
  s(x) = \sum_{k=1}^{\ell} \alpha_k \sum_{\substack{(m,n)\in\mathbb{N}_0\times\mathbb{N}_{0_i} \\ \kappa_{m,n}^{(i)} \neq 0}} \sum_{j=1}^{2n+1} \left(\kappa_{m,n}^{(i)}\right)^{-2} h_{m,n,j}^{(i)}(x)  \mathcal{A}^k h_{m,n,j}^{(i)}, \qquad x \in B_{R}.
\end{equation}
The convergence of the stated series is implied by the summability condition, the convergence of the reproducing kernel series, and the linearity and continuity of the functionals.

Several useful properties of (scalar) splines (over the ball) have already been known. For example, in \cite[Thm.\ 10.13-14]{Michel2013}, two minimum properties of scalar splines over the ball are proved. These statements also hold true in the vector-valued case, which is proved for the particular setting of $\mathscr{H}^{(i)}$ being the product space of two scalar Sobolev spaces in \cite{Berkel2010a,Berkel2009} using tensor-valued reproducing kernels. 
Now we adapt these statements to our setting. 
For this purpose, three central properties need to be verified.
\begin{lemma}\label{lem:VecSpline}
  Let the reproducing kernel be given as in \eqref{eq:TensRepKer}, then
  \begin{enumerate}
    \item a kind of reproducing property holds true for all $k=1,\dots,\ell$, that is
    \begin{equation*}
       \scalar[\mathscr{H}^{(i)}]{\mathcal{A}_{x}^k \left(\mathfrak{k}^{(i)}(x, \cdot)\right)}{f} = \mathcal{A}^k f,
    \end{equation*}
    for all $f \in \mathscr{H}^{(i)}$,
    \item for all $k,\, l=1,\dots,\ell$ the following relation holds true:
    \begin{equation*}
      \mathcal{A}^l_{x} \mathcal{A}^k_{z} \mathfrak{k}^{(i)}(x, z) = \scalar[\mathscr{H}^{(i)}]{\mathcal{A}^k_{x} \mathfrak{k}^{(i)}(\cdot,x)}{\mathcal{A}^k_{z} \mathfrak{k}^{(i)}(\cdot,z)},
    \end{equation*}
    and
    \item every spline function $s \in \mathrm{Spline}((\kappa_{m,n}^{(i)})_{m,n}, \mathcal{A})$ satisfies the relation
    \begin{equation}\label{eq:ReproPropSpline}
      \scalar[\mathscr{H}^{(i)}]{s}{f} = \sum_{k=1}^{\ell} \alpha_k \mathcal{A}^k f
    \end{equation}
    for all $f \in \mathscr{H}^{(i)}$.
  \end{enumerate}
\end{lemma}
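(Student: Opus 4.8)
The plan is to prove the reproducing identity (1) by a direct computation with the series \eqref{eq:TensRepKer}, and then to read off (2) and (3) as short corollaries. Everything hinges on identifying the element of $\mathscr{H}^{(i)}$
\[
  \varphi^k \coloneqq \mathcal{A}_x^k\bigl(\mathfrak{k}^{(i)}(x,\cdot)\bigr)
  = \sum_{\substack{(m,n)\in\mathbb{N}_0\times\mathbb{N}_{0_i}\\ \kappa_{m,n}^{(i)}\neq 0}}\ \sum_{j=1}^{2n+1}
  \bigl(\kappa_{m,n}^{(i)}\bigr)^{-2}\,\bigl(\mathcal{A}^k h_{m,n,j}^{(i)}\bigr)\, h_{m,n,j}^{(i)} .
\]
I would obtain this formula by observing that, for fixed second argument, $\mathfrak{k}^{(i)}(\cdot,y)$ lies columnwise in $\mathscr{H}^{(i)}$ (as assumed for the kernel), so its expansion \eqref{eq:TensRepKer} converges in the $\mathscr{H}^{(i)}$-norm; then convention \eqref{eq:Func2Tensor} bundles the first tensor slot into $\mathcal{A}^k$, and since $\mathcal{A}^k\colon\mathscr{H}^{(i)}\to\mathbb{R}$ is linear and continuous it may be moved inside the $\mathscr{H}^{(i)}$-convergent series. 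That the resulting series is again $\mathscr{H}^{(i)}$-convergent is exactly the fact recorded right after \eqref{eq:defiSpline} (summability condition, Def.~\ref{defi:sumSequence}).

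To finish (1), take any $f\in\mathscr{H}^{(i)}$. Since $\{h_{m,n,j}^{(i)}\}$ is $\mathrm{L}^2(B_R,\mathbb{R}^3)$-orthonormal, the $\mathrm{L}^2$-Fourier coefficients of $\varphi^k$ are $(\kappa_{m,n}^{(i)})^{-2}\mathcal{A}^k h_{m,n,j}^{(i)}$, so in the defining formula of $\scalar[\mathscr{H}^{(i)}]{\cdot}{\cdot}$ (Def.~\ref{defi:SobolevSpaceBall}) the weight $(\kappa_{m,n}^{(i)})^{2}$ cancels one factor $(\kappa_{m,n}^{(i)})^{-2}$, leaving $\scalar[\mathscr{H}^{(i)}]{\varphi^k}{f}=\sum_{m,n,j}\bigl(\mathcal{A}^k h_{m,n,j}^{(i)}\bigr)\,\scalar[{\mathrm{L}^2(B_R,\mathbb{R}^3)}]{f}{h_{m,n,j}^{(i)}}$. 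On the other hand, expanding $f=\sum_{m,n,j}\scalar[{\mathrm{L}^2(B_R,\mathbb{R}^3)}]{f}{h_{m,n,j}^{(i)}}\,h_{m,n,j}^{(i)}$ in the $\mathscr{H}^{(i)}$-norm and using continuity of $\mathcal{A}^k$ gives $\mathcal{A}^k f=\sum_{m,n,j}\scalar[{\mathrm{L}^2(B_R,\mathbb{R}^3)}]{f}{h_{m,n,j}^{(i)}}\,\mathcal{A}^k h_{m,n,j}^{(i)}$. The two right-hand sides coincide, which is (1).

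Items (2) and (3) then follow with no further work. For (2) note $\mathcal{A}_z^k(\mathfrak{k}^{(i)}(\cdot,z))=\varphi^k\in\mathscr{H}^{(i)}$, so plugging $f=\varphi^k$ into the reproducing property (1) yields $\scalar[\mathscr{H}^{(i)}]{\mathcal{A}_x^l\mathfrak{k}^{(i)}(\cdot,x)}{\mathcal{A}_z^k\mathfrak{k}^{(i)}(\cdot,z)}=\mathcal{A}^l\varphi^k=\mathcal{A}_x^l\mathcal{A}_z^k\mathfrak{k}^{(i)}(x,z)$; alternatively one checks directly that both sides equal $\sum_{m,n,j}(\kappa_{m,n}^{(i)})^{-2}(\mathcal{A}^l h_{m,n,j}^{(i)})(\mathcal{A}^k h_{m,n,j}^{(i)})$. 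For (3), write the spline (cf.\ \eqref{eq:VectorialSpline}, \eqref{eq:defiSpline}) as $s=\sum_{k=1}^{\ell}\alpha_k\mathcal{A}_z^k(\mathfrak{k}^{(i)}(z,\cdot))=\sum_{k=1}^{\ell}\alpha_k\varphi^k$ and use bilinearity of $\scalar[\mathscr{H}^{(i)}]{\cdot}{\cdot}$ together with (1) on each summand to obtain $\scalar[\mathscr{H}^{(i)}]{s}{f}=\sum_{k=1}^{\ell}\alpha_k\scalar[\mathscr{H}^{(i)}]{\varphi^k}{f}=\sum_{k=1}^{\ell}\alpha_k\mathcal{A}^k f$, i.e.\ \eqref{eq:ReproPropSpline}.

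The one step that deserves care — and which I would write out fully — is the termwise application of $\mathcal{A}^k$ to the kernel series in the first paragraph: one must be sure the manipulations take place in the $\mathscr{H}^{(i)}$-topology, so that continuity of $\mathcal{A}^k$ on $\mathscr{H}^{(i)}$ (rather than mere pointwise convergence of \eqref{eq:TensRepKer}) is what is invoked, and one must keep the bookkeeping between the two tensor slots of $\mathfrak{k}^{(i)}$ and the two spatial variables consistent with \eqref{eq:Func2Tensor}. Everything else is the routine cancellation of powers of $\kappa_{m,n}^{(i)}$ against the $\mathrm{L}^2$-orthonormality of the basis $\{h_{m,n,j}^{(i)}\}$.
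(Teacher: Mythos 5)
Your proposal is correct and follows essentially the same route as the paper's proof: expand $\mathcal{A}_x^k\mathfrak{k}^{(i)}(x,\cdot)$ in the basis with coefficients $(\kappa_{m,n}^{(i)})^{-2}\mathcal{A}^k h_{m,n,j}^{(i)}$, cancel the weights against the $\mathscr{H}^{(i)}$-inner product, and pass $\mathcal{A}^k$ through the $\mathscr{H}^{(i)}$-convergent Fourier series of $f$ by linearity and continuity, with items (2) and (3) obtained exactly as in the paper as the special case $f=\mathcal{A}_z^k\mathfrak{k}^{(i)}(\cdot,z)$ and via the spline representation plus bilinearity. Your explicit remark about working in the $\mathscr{H}^{(i)}$-topology only spells out what the paper invokes through Thm.~\ref{thm:SobolevEmbedding} and the continuity of the functionals.
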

\begin{proof}
  We start with the proof of the first item. 
  Then, we obtain with the definition of the $\mathscr{H}^{(i)}$-inner product, \eqref{eq:Func2Tensor},  \eqref{eq:defiSpline}, and Thm.~\ref{thm:SobolevEmbedding} as well as the linearity and continuity of the $\mathcal{A}^k$
  \begin{equation}
    \scalar[\mathscr{H}^{(i)}]{\mathcal{A}_{x}^k \left(\mathfrak{k}^{(i)}(x, \cdot)\right)}{f}
    \begin{aligned}[t]
        &= \sum_{\substack{(m,n)\in\mathbb{N}_0\times\mathbb{N}_{0_i} \\ \kappa_{m,n}^{(i)} \neq 0}} \sum_{j=1}^{2n+1} \left(\kappa_{m,n}^{(i)}\right)^{2}\left(\kappa_{m,n}^{(i)}\right)^{-2} \mathcal{A}^k \left(h_{m,n,j}^{(i)}\right) \scalar[\mathrm{L}^2(B_{R},\mathbb{R}^3)]{f}{h_{m,n,j}^{(i)} } \\
        &= \mathcal{A}^k \left(\sum_{\substack{(m,n)\in\mathbb{N}_0\times\mathbb{N}_{0_i} \\ \kappa_{m,n}^{(i)} \neq 0}} \sum_{j=1}^{2n+1} \scalar[\mathrm{L}^2(B_{R},\mathbb{R}^3)]{f}{h_{m,n,j}^{(i)} } h_{m,n,j}^{(i)}\right) \\
        &= \mathcal{A}^k f%
    \end{aligned}
  \end{equation}
  The last step is valid since $\mathscr{H}^{(i)} \subset \mathrm{L}^2(B_{R},\mathbb{R}^3)$ and, hence, $f$ can be represented by the Fourier series. 
  The second item is a particular case of the first with $f = \mathcal{A}^k_{z} \mathfrak{k}^{(i)}(\cdot, z)$.

  For the proof of the last statement, we only need to use the representation of the vector-valued spline from \eqref{eq:VectorialSpline}, the linearity of the inner-product, and the first item of this lemma. 
\end{proof}
  
With this preliminary work, the ideas of the original proofs, which can for instance be found in \cite{Michel2013}, of the next statements are still valid in the vector-valued case. 
\begin{theorem}\label{thm:VecSplineInterpolationProbl}
  Let $g \in\mathbb{R}^\ell$ be the given data and the spline function $s \in \mathscr{H}^{(i)}$ be unknown. Then the spline interpolation problem $\mathcal{A}^k s = g_k$ for all $k=1,\dots, \ell$ is uniquely solvable if and only if the functionals $\set[k=1,\dots,\ell]{\mathcal{A}^k}$ are linearly independent. 
\end{theorem}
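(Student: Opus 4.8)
The plan is to reduce the spline interpolation problem to the finite-dimensional linear system whose coefficient matrix is the Gram-type matrix
\[
  \mathfrak{A} \coloneqq \left(\mathcal{A}^l_{x}\mathcal{A}^k_{z}\mathfrak{k}^{(i)}(x,z)\right)_{l,k=1,\dots,\ell}\in\mathbb{R}^{\ell\times\ell},
\]
and to show that $\mathfrak{A}$ is invertible if and only if the functionals $\mathcal{A}^1,\dots,\mathcal{A}^\ell$ are linearly independent. Since every spline $s\in\mathrm{Spline}((\kappa_{m,n}^{(i)})_{m,n},\mathcal{A})$ is of the form $s=\sum_{k=1}^\ell\alpha_k\mathcal{A}_z^k(\mathfrak{k}^{(i)}(z,\cdot))$ by \eqref{eq:VectorialSpline}, applying $\mathcal{A}^l$ and using \eqref{eq:ReproPropSpline} of Lemma~\ref{lem:VecSpline} (or directly item~(2)) gives $\mathcal{A}^l s=\sum_{k=1}^\ell\alpha_k\,\mathcal{A}^l_x\mathcal{A}^k_z\mathfrak{k}^{(i)}(x,z)=(\mathfrak{A}\alpha)_l$. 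Hence the interpolation condition $\mathcal{A}^l s=g_l$ for all $l$ is equivalent to $\mathfrak{A}\alpha=g$, and the interpolation problem (over the $\ell$-dimensional space of splines) is uniquely solvable for every right-hand side $g$ exactly when $\mathfrak{A}$ is nonsingular.

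The heart of the argument is therefore the equivalence $\mathfrak{A}$ nonsingular $\Longleftrightarrow$ $\{\mathcal{A}^k\}$ linearly independent. For this I would identify $\mathfrak{A}$ as the Gram matrix of the Riesz representers. By item~(1) of Lemma~\ref{lem:VecSpline}, $r_k\coloneqq\mathcal{A}^k_x(\mathfrak{k}^{(i)}(x,\cdot))\in\mathscr{H}^{(i)}$ satisfies $\langle r_k,f\rangle_{\mathscr{H}^{(i)}}=\mathcal{A}^k f$ for all $f\in\mathscr{H}^{(i)}$, i.e.\ $r_k$ is the Riesz representer of the continuous linear functional $\mathcal{A}^k$; and by item~(2), $\mathfrak{A}_{l,k}=\langle r_l,r_k\rangle_{\mathscr{H}^{(i)}}$. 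A Gram matrix is nonsingular precisely when the underlying vectors $r_1,\dots,r_\ell$ are linearly independent in $\mathscr{H}^{(i)}$. Finally, the Riesz map $\mathcal{A}^k\mapsto r_k$ is a linear isomorphism from the dual $(\mathscr{H}^{(i)})'$ onto $\mathscr{H}^{(i)}$, so $r_1,\dots,r_\ell$ are linearly independent in $\mathscr{H}^{(i)}$ if and only if $\mathcal{A}^1,\dots,\mathcal{A}^\ell$ are linearly independent in the dual space. Chaining these equivalences finishes the proof.

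One subtlety I would be careful about is the precise meaning of "uniquely solvable": the statement is about the spline interpolation problem, so uniqueness is sought within $\mathrm{Spline}((\kappa_{m,n}^{(i)})_{m,n},\mathcal{A})$, which by construction is $\ell$-dimensional and parametrized bijectively by $\alpha\in\mathbb{R}^\ell$ (one should note that distinct $\alpha$ give distinct splines precisely when the $r_k$ are independent — but this is the same condition, so no circularity arises if phrased carefully). The map $\alpha\mapsto s_\alpha$ followed by $s\mapsto(\mathcal{A}^l s)_l$ is the linear map $\mathfrak{A}$, and unique solvability for all data $g$ is the bijectivity of this composite, i.e.\ invertibility of $\mathfrak{A}$. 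The main (mild) obstacle is purely bookkeeping: making sure the Gram-matrix criterion and the Riesz-representation isomorphism are invoked in the correct spaces, and that the passage between "$s$ determined by $\alpha$" and "$\alpha$ determined by $g$" does not hide an extra independence assumption. All the analytic content — convergence of the kernel series, continuity of the functionals, and the reproducing identities — has already been supplied by the summability condition and Lemma~\ref{lem:VecSpline}, so no further estimates are needed.
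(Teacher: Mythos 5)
Your proposal is correct and follows essentially the same route as the paper: reduce the interpolation condition to the linear system with matrix $\left(\mathcal{A}^l_x\mathcal{A}^k_z\mathfrak{k}^{(i)}(x,z)\right)_{l,k}$, recognize it via items (1) and (2) of Lem.~\ref{lem:VecSpline} as the Gram matrix of the Riesz representers $\mathcal{A}^k_x\mathfrak{k}^{(i)}(x,\cdot)$, and conclude regularity of the Gram matrix if and only if the functionals are linearly independent. Your added care about the parametrization of the spline space by $\alpha$ and the Riesz isomorphism merely makes explicit what the paper leaves implicit.
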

\begin{proof}
  Via \eqref{eq:VectorialSpline}, we see that the interpolation problem is equivalent to solving a linear system of equations with the given data as the right-hand side and the matrix
  \begin{equation*}
    \left(\mathcal{A}_{x}^l \mathcal{A}_{z}^k \left(\mathfrak{k}^{(i)}(z, x)\right)\right)_{l,k =1,\dots,\ell},
  \end{equation*}
  which is uniquely solvable if and only if the matrix is regular. Via item (2) of Lem.~\ref{lem:VecSpline}, we get that this matrix is a Gramian matrix. Item (1) of this lemma provides us with the property that $\set[k=1,\dots,\ell]{\mathcal{A}^k}$ is linearly independent if and only if $\set[k=1,\dots,\ell]{\mathcal{A}_{x}^k \mathfrak{k}^{(i)}(x, \cdot)}$ is linearly independent.
\end{proof}
Again with \eqref{eq:ReproPropSpline}, the proofs of the following theorems are immediate consequences of the ones in {\cite[Thm.\ 10.13-10.16]{Michel2013}} and therefore skipped for the sake of brevity.

\begin{theorem}[Minimum Properties]\label{thm:MNP}
  Let $\mathscr{H}^{(i)} \subset \mathrm{L}^2(B_{R},\mathbb{R}^3)$ be a given Sobolev space and $\mathcal{A}^k \colon \mathscr{H}^{(i)} \to \mathbb{R}$ be bounded linear functionals for all $k=1,\dots,\ell$ that are linearly independent. Then the following properties hold true:
  \begin{enumerate}
    \item If $g  \in\mathbb{R}^\ell$ is a given vector and the spline $s$ is given by $\mathcal{A}^k s = g_k$ for all $k=1,\dots,\ell$, then $s$ is the unique minimizer of 
    \begin{equation*}
      \norm[{\mathscr{H}^{(i)}}]{s} = \min \set{ \norm[{\mathscr{H}^{(i)}}]{f} \;\middle|\; f \in \mathscr{H}^{(i)} \text{ with } \mathcal{A}^k f = g_k \text{ for all } k=1,\dots,\ell}.
    \end{equation*}
    \item If $f \in \mathscr{H}^{(i)}$ is a given function and the spline $s$ is defined by $\mathcal{A}^k s = \mathcal{A}^k f$ for all $k=1,\dots,\ell$, then $s$ is the unique minimizer of
    \begin{equation*}
      \norm[{\mathscr{H}^{(i)}}]{f- s} = \min \set{\norm[{\mathscr{H}^{(i)}}]{f - \bar{s}} \;\middle|\; \bar{s} \in \mathrm{Spline}\left((\kappa_{m,n}^{(i)})_{m,n}, \mathcal{A}\right) }.
    \end{equation*}
  \end{enumerate}
\end{theorem}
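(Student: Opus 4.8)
The plan is to obtain both minimum properties from the reproducing-type identity \eqref{eq:ReproPropSpline} combined with the Pythagorean theorem in the Hilbert space $\mathscr{H}^{(i)}$, following the scalar argument in \cite[Thm.\ 10.13--10.16]{Michel2013}. As a preliminary step I would note that, because the functionals $\mathcal{A}^k$ are linearly independent, Thm.~\ref{thm:VecSplineInterpolationProbl} ensures that the spline $s$ appearing in each item is well defined and unique: in (1) it is the unique element of $\mathrm{Spline}((\kappa_{m,n}^{(i)})_{m,n}, \mathcal{A})$ with $\mathcal{A}^k s = g_k$, and in (2) the same with the data $g_k \coloneqq \mathcal{A}^k f$. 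I would also record that $\mathrm{Spline}((\kappa_{m,n}^{(i)})_{m,n}, \mathcal{A})$ is an $\ell$-dimensional \emph{linear} subspace of $\mathscr{H}^{(i)}$ (immediate from \eqref{eq:VectorialSpline}), so that differences of splines are again splines, and that $\norm[{\mathscr{H}^{(i)}}]{\cdot}$ is a genuine norm on $\mathscr{H}^{(i)}$.

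For item (1), let $f \in \mathscr{H}^{(i)}$ be any function with $\mathcal{A}^k f = g_k$ for all $k=1,\dots,\ell$, and let $\alpha$ be the coefficient vector of $s$. Then $\mathcal{A}^k(f-s) = 0$ for every $k$, so applying \eqref{eq:ReproPropSpline} to $s$ gives $\scalar[{\mathscr{H}^{(i)}}]{s}{f-s} = \sum_{k=1}^{\ell} \alpha_k \mathcal{A}^k(f-s) = 0$. Hence $f = s + (f-s)$ is an orthogonal decomposition in $\mathscr{H}^{(i)}$, so that $\norm[{\mathscr{H}^{(i)}}]{f}^2 = \norm[{\mathscr{H}^{(i)}}]{s}^2 + \norm[{\mathscr{H}^{(i)}}]{f-s}^2 \geq \norm[{\mathscr{H}^{(i)}}]{s}^2$, with equality precisely when $\norm[{\mathscr{H}^{(i)}}]{f-s} = 0$, i.e.\ $f = s$. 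This yields both minimality and uniqueness of $s$.

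For item (2), let $f \in \mathscr{H}^{(i)}$ be given, let $s$ be the spline with $\mathcal{A}^k s = \mathcal{A}^k f$ for all $k$ and coefficient vector $\alpha$, and let $\bar s \in \mathrm{Spline}((\kappa_{m,n}^{(i)})_{m,n}, \mathcal{A})$ be arbitrary with coefficient vector $\bar\alpha$. Since $s - \bar s$ is again a spline, with coefficient vector $\alpha - \bar\alpha$, and $\mathcal{A}^k(f-s) = 0$ for all $k$, identity \eqref{eq:ReproPropSpline} applied to $s-\bar s$ and tested against $f-s$ gives $\scalar[{\mathscr{H}^{(i)}}]{s-\bar s}{f-s} = \sum_{k=1}^{\ell} (\alpha_k - \bar\alpha_k)\,\mathcal{A}^k(f-s) = 0$. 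Decomposing $f - \bar s = (f-s) + (s - \bar s)$ orthogonally then gives $\norm[{\mathscr{H}^{(i)}}]{f-\bar s}^2 = \norm[{\mathscr{H}^{(i)}}]{f-s}^2 + \norm[{\mathscr{H}^{(i)}}]{s - \bar s}^2 \geq \norm[{\mathscr{H}^{(i)}}]{f-s}^2$, with equality if and only if $s - \bar s = 0$, which proves the claim.

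I do not anticipate a genuine obstacle here: once \eqref{eq:ReproPropSpline} (from Lem.~\ref{lem:VecSpline}) is available, everything collapses to orthogonality plus the Pythagorean theorem. The only points deserving a word of justification are the well-definedness and uniqueness of $s$ — which come from Thm.~\ref{thm:VecSplineInterpolationProbl} via linear independence of the functionals — and the fact that the spline set is a linear subspace, so that $s - \bar s$ remains a spline; both are immediate from the constructions in Sec.~\ref{sec:RKHSball}, so the genuinely new content over the scalar case \cite{Michel2013} is merely the availability of the vector-valued reproducing kernel and its reproducing property.
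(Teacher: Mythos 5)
Your proof is correct and follows exactly the route the paper intends: it invokes the relation \eqref{eq:ReproPropSpline} from Lem.~\ref{lem:VecSpline} together with the orthogonality/Pythagoras argument of the scalar case \cite[Thm.\ 10.13--10.16]{Michel2013}, which is precisely what the paper refers to when it skips the proof. No gaps; the preliminary remarks on well-definedness of $s$ via Thm.~\ref{thm:VecSplineInterpolationProbl} and on the linearity of the spline space are the right (and only) details to record.
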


\begin{theorem}[Shannon Sampling Theorem]
  Any spline function $s \in \mathrm{Spline}\left((\kappa_{m,n}^{(i)})_{m,n}, \mathcal{A}\right)$ is representable by its samples $\mathcal{A}^k s$ as
  \begin{equation*}
    s(x) = \sum_{k=1}^\ell (\mathcal{A}^k s) L_k(x), \qquad x \in B_R,
  \end{equation*}
  where
  \begin{equation*}
    L_k(x) = \sum_{j=1}^\ell \alpha_j^{(k)} \mathcal{A}^j \mathfrak{k}^{(i)}(\cdot, x), \qquad x \in B_R,
  \end{equation*}
  with $\alpha_j^{(k)}$ given as solutions of the linear equation systems
  \begin{equation*}
    \sum_{j=1}^\ell \alpha_j^{(k)} \mathcal{A}^l \mathcal{A}^j \mathfrak{k}^{(i)}(\cdot, \cdot) = \delta_{k,l} \qquad \text{ for all } k,\, l = 1,\dots,N.
  \end{equation*}
\end{theorem}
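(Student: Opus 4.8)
The plan is to reduce everything to the already-established reproducing property \eqref{eq:ReproPropSpline} together with the unique solvability of the Gram system from Thm.~\ref{thm:VecSplineInterpolationProbl}. First I would argue that the linear equation systems defining the coefficients $\alpha_j^{(k)}$ are uniquely solvable. Indeed, by item~(2) of Lem.~\ref{lem:VecSpline} the coefficient matrix $\bigl(\mathcal{A}^l\mathcal{A}^j\mathfrak{k}^{(i)}(\cdot,\cdot)\bigr)_{l,j=1,\dots,\ell}$ is the Gram matrix of the family $\{\mathcal{A}^j_x\mathfrak{k}^{(i)}(\cdot,x)\}_{j=1,\dots,\ell}$ in $\mathscr{H}^{(i)}$, and by item~(1) this family is linearly independent precisely because the functionals $\{\mathcal{A}^k\}$ are. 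Hence the Gram matrix is regular, so for each $k$ there is a unique coefficient vector $(\alpha_j^{(k)})_{j=1,\dots,\ell}$ solving the system with right-hand side $(\delta_{k,l})_{l=1,\dots,\ell}$; this also makes each $L_k$ a well-defined element of $\mathrm{Spline}\bigl((\kappa_{m,n}^{(i)})_{m,n},\mathcal{A}\bigr)$.

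Next I would verify the interpolation identity $\mathcal{A}^l L_k = \delta_{k,l}$. Applying the functional $\mathcal{A}^l$ to the definition $L_k(x)=\sum_{j=1}^\ell \alpha_j^{(k)}\mathcal{A}^j\mathfrak{k}^{(i)}(\cdot,x)$ and using linearity of $\mathcal{A}^l$ gives $\mathcal{A}^l L_k=\sum_{j=1}^\ell\alpha_j^{(k)}\,\mathcal{A}^l_x\mathcal{A}^j_z\mathfrak{k}^{(i)}(z,x)$, which by the defining linear system equals $\delta_{k,l}$. So the $L_k$ are the cardinal (Lagrange-type) splines for the node functionals.

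Finally I would prove the representation formula itself. Let $s\in\mathrm{Spline}\bigl((\kappa_{m,n}^{(i)})_{m,n},\mathcal{A}\bigr)$ be arbitrary and set $\bar{s}(x)\coloneqq\sum_{k=1}^\ell (\mathcal{A}^k s)\,L_k(x)$. Since each $L_k$ lies in the $\ell$-dimensional space $\mathrm{Spline}\bigl((\kappa_{m,n}^{(i)})_{m,n},\mathcal{A}\bigr)$, so does $\bar{s}$. By the previous step, for every $l$ we have $\mathcal{A}^l\bar{s}=\sum_{k=1}^\ell(\mathcal{A}^k s)\,\mathcal{A}^l L_k=\sum_{k=1}^\ell(\mathcal{A}^k s)\,\delta_{k,l}=\mathcal{A}^l s$. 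Thus $s$ and $\bar{s}$ are two spline functions subject to $\mathcal{A}$ with identical data, and by the uniqueness part of Thm.~\ref{thm:VecSplineInterpolationProbl} (equivalently, by counting dimensions: the map $\mathrm{Spline}\to\mathbb{R}^\ell$, $t\mapsto(\mathcal{A}^kt)_k$, is injective on the $\ell$-dimensional spline space since its matrix is the regular Gram matrix) it follows that $s=\bar{s}$, which is the claimed identity.

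The only genuinely delicate point is making sure the linear systems for the $\alpha_j^{(k)}$ are solvable, and this is already delivered by the Gram-matrix structure established in Lem.~\ref{lem:VecSpline} and Thm.~\ref{thm:VecSplineInterpolationProbl}; once that is in place the remaining arguments are purely formal manipulations of finite sums. I would also remark that the statement's index range ``for all $k,l=1,\dots,N$'' should read ``$k,l=1,\dots,\ell$'', matching the number of functionals.
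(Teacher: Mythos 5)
Your proposal is correct and follows essentially the same route as the argument the paper relies on: the paper omits the proof, noting it carries over from the scalar case in \cite[Thm.\ 10.13--10.16]{Michel2013}, and that standard argument is exactly yours --- regularity of the Gram matrix (Lem.~\ref{lem:VecSpline}, Thm.~\ref{thm:VecSplineInterpolationProbl}) gives the cardinal splines $L_k$ with $\mathcal{A}^l L_k = \delta_{k,l}$, and uniqueness of spline interpolation identifies $s$ with $\sum_k (\mathcal{A}^k s) L_k$. Your remark that the index range should read $k,\,l=1,\dots,\ell$ rather than $N$ is also correct.
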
 

\begin{theorem}[Spline Approximation]\label{thm:RegSolSplineVec}
  Let $g \in\mathbb{R}^\ell$ and a regularization parameter $\lambda > 0$ be given. Let the bounded linear functionals $\mathcal{A}^k \colon \mathscr{H}^{(i)} \to \mathbb{R}$, with $k=1,\dots,\ell$, be linearly independent. If the vector $\alpha = ( \alpha_k)_{k=1,\dots,\ell}$ is the solution of
  \begin{equation}\label{eq:SplineMatrixVec}
    \left(\left(\mathcal{A}_{x}^l \mathcal{A}_{z}^k \left(\mathfrak{k}^{(i)}(z, x)\right)\right)_{l,k =1,\dots,\ell} + \lambda \mathcal{I}_{\mathbb{R}^{\ell\times\ell}} \right) \alpha = g,
  \end{equation}
  then the spline function given by \eqref{eq:VectorialSpline} is the unique minimizer of the corresponding Tikhonov-functional, that is
  \begin{equation*}
    s = \argmin_{f \in\mathscr{H}^{(i)}} \left(\norm[\mathbb{R}^\ell]{g - \mathcal{A}f}^2 + \lambda \norm[\mathscr{H}^{(i)}]{f}^2\right).
  \end{equation*}
\end{theorem}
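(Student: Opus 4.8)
The plan is to turn the infinite-dimensional Tikhonov minimization into a finite-dimensional strictly convex quadratic problem on the $\ell$-dimensional space $\mathrm{Spline}((\kappa_{m,n}^{(i)})_{m,n},\mathcal{A})$, whose normal equation is exactly \eqref{eq:SplineMatrixVec}. First I would fix the notation $\mathbf{K}\coloneqq(\mathcal{A}_{x}^l \mathcal{A}_{z}^k(\mathfrak{k}^{(i)}(z,x)))_{l,k=1,\dots,\ell}$ for the Gram matrix appearing in \eqref{eq:SplineMatrixVec}. By item~(2) of Lemma~\ref{lem:VecSpline}, $\mathbf{K}$ is the Gramian of the family $\{\mathcal{A}^k_{x}\mathfrak{k}^{(i)}(x,\cdot)\}_{k=1,\dots,\ell}$ in $\mathscr{H}^{(i)}$, so $\mathbf{K}$ is symmetric and positive semidefinite; by item~(1) this family is linearly independent precisely because the $\mathcal{A}^k$ are, hence $\mathbf{K}$ is in fact positive definite and invertible, and \eqref{eq:SplineMatrixVec} has a unique solution $\alpha$.

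Next I would use an orthogonal decomposition to show the minimizer must be a spline. By item~(1) of Lemma~\ref{lem:VecSpline}, a function $f\in\mathscr{H}^{(i)}$ is $\mathscr{H}^{(i)}$-orthogonal to $\mathrm{Spline}((\kappa_{m,n}^{(i)})_{m,n},\mathcal{A})$ if and only if $\mathcal{A}^k f=0$ for all $k$; since the spline space is finite-dimensional, hence closed, every $f$ splits uniquely as $f=\bar{s}+h$ with $\bar{s}\in\mathrm{Spline}((\kappa_{m,n}^{(i)})_{m,n},\mathcal{A})$ and $\mathcal{A}h=0$. Then $\mathcal{A}f=\mathcal{A}\bar{s}$ and, by Pythagoras, $\norm[\mathscr{H}^{(i)}]{f}^2=\norm[\mathscr{H}^{(i)}]{\bar{s}}^2+\norm[\mathscr{H}^{(i)}]{h}^2\ge\norm[\mathscr{H}^{(i)}]{\bar{s}}^2$ with equality iff $h=0$; hence the Tikhonov functional at $f$ is never smaller than at $\bar{s}$, so it suffices to minimize over the spline space and it is enough to show uniqueness there.

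Then I would carry out the finite-dimensional minimization. Parametrize $\bar{s}=\sum_{k=1}^{\ell}\beta_k\,\mathcal{A}^k_{z}\mathfrak{k}^{(i)}(z,\cdot)$ by $\beta\in\mathbb{R}^\ell$. From item~(2) of Lemma~\ref{lem:VecSpline} one gets $\mathcal{A}\bar{s}=\mathbf{K}\beta$, and from \eqref{eq:ReproPropSpline} applied with $f=\bar{s}$ one gets $\norm[\mathscr{H}^{(i)}]{\bar{s}}^2=\beta^{\mathrm{T}}\mathbf{K}\beta$. Thus, over the spline space, the Tikhonov functional equals $\Phi(\beta)\coloneqq\norm[\mathbb{R}^\ell]{g-\mathbf{K}\beta}^2+\lambda\,\beta^{\mathrm{T}}\mathbf{K}\beta$, a quadratic whose Hessian $2\mathbf{K}(\mathbf{K}+\lambda\mathcal{I}_{\mathbb{R}^{\ell\times\ell}})$ is symmetric positive definite (a product of two commuting symmetric positive definite matrices, since $\mathbf{K}$ is positive definite and $\lambda>0$); hence $\Phi$ is strictly convex and has a unique minimizer. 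Setting $\nabla\Phi(\beta)=0$ and using $\mathbf{K}^{\mathrm{T}}=\mathbf{K}$ gives $-\mathbf{K}(g-\mathbf{K}\beta)+\lambda\mathbf{K}\beta=0$, i.e.\ $\mathbf{K}\bigl((\mathbf{K}+\lambda\mathcal{I}_{\mathbb{R}^{\ell\times\ell}})\beta-g\bigr)=0$; invertibility of $\mathbf{K}$ collapses this to $(\mathbf{K}+\lambda\mathcal{I}_{\mathbb{R}^{\ell\times\ell}})\beta=g$, which is precisely \eqref{eq:SplineMatrixVec}. Therefore $\beta=\alpha$, the corresponding spline $s$ from \eqref{eq:VectorialSpline} is the unique minimizer of the Tikhonov functional over $\mathrm{Spline}((\kappa_{m,n}^{(i)})_{m,n},\mathcal{A})$, and combining with the previous paragraph (any $f=\bar{s}+h$ satisfies $J(f)=\Phi(\beta)+\lambda\norm[\mathscr{H}^{(i)}]{h}^2\ge\Phi(\alpha)=J(s)$ with equality iff $h=0$ and $\bar{s}=s$) it is the unique minimizer over all of $\mathscr{H}^{(i)}$.

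The matrix identities and the convex-quadratic step are routine substitutions once Lemma~\ref{lem:VecSpline} is in hand. The step requiring the most care is the passage from the infinite-dimensional to the finite-dimensional problem: one must verify that $\mathrm{Spline}((\kappa_{m,n}^{(i)})_{m,n},\mathcal{A})^{\perp}$ coincides exactly with the common null space of the functionals and that $\mathbf{K}$ is genuinely positive definite rather than merely positive semidefinite — this is where linear independence of the $\mathcal{A}^k$ enters essentially, both to make \eqref{eq:SplineMatrixVec} uniquely solvable and to guarantee that the reduced problem faithfully captures the full minimization.
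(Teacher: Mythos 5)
Your proof is correct, but it follows a genuinely different route from the one in the paper. You argue in representer-theorem style: using item~(1) of Lem.~\ref{lem:VecSpline} you identify the $\mathscr{H}^{(i)}$-orthogonal complement of $\mathrm{Spline}((\kappa_{m,n}^{(i)})_{m,n},\mathcal{A})$ with the common null space of the functionals, split $f=\bar{s}+h$, observe $\mathcal{A}f=\mathcal{A}\bar{s}$ and Pythagoras, and thereby reduce the infinite-dimensional Tikhonov minimization to the strictly convex quadratic $\Phi(\beta)=\norm[\mathbb{R}^\ell]{g-\mathbf{K}\beta}^2+\lambda\,\beta^{\mathrm{T}}\mathbf{K}\beta$ on the spline space, whose normal equation collapses (via positive definiteness of the Gramian $\mathbf{K}$, exactly as in the proof of Thm.~\ref{thm:VecSplineInterpolationProbl}) to \eqref{eq:SplineMatrixVec}. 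The paper instead works directly in $\mathscr{H}^{(i)}$: it takes the spline $s$ attached to the solution $\alpha$ of \eqref{eq:SplineMatrixVec}, proves the auxiliary orthogonality $\scalar[\mathscr{H}^{(i)}]{\mathcal{A}^\ast g-\mathcal{A}^\ast\mathcal{A}s-\lambda s}{s-f}=0$ by showing $\mathcal{A}^\ast\alpha=s$ (item~(3) of Lem.~\ref{lem:VecSpline}), and then expands $\norm[\mathbb{R}^\ell]{g-\mathcal{A}f}^2+\lambda\norm[\mathscr{H}^{(i)}]{f}^2$ around $s$ to read off the inequality with equality only at $f=s$. Your version buys a structural insight the paper leaves implicit — the minimizer must lie in the $\ell$-dimensional spline space, and existence/uniqueness follows from strict convexity of the reduced quadratic rather than from verifying a candidate — at the cost of carrying the decomposition and the Hessian computation; the paper's argument is shorter once the candidate $s$ is in hand, needs no explicit orthogonal projection, but gives less indication of why the solution has spline form. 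Both proofs rest on the same three properties of Lem.~\ref{lem:VecSpline}, and your use of linear independence (to make $\mathbf{K}$ positive definite and the reduction faithful) is exactly where the paper uses it too, so no hypotheses are used differently.
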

\begin{proof}
  Let $s \in \mathrm{Spline}\left((\kappa_{m,n}^{(i)})_{m,n}, \mathcal{A}\right)$ be the unique solution of \eqref{eq:SplineMatrixVec} which exists since the occurring matrix is positive definite, see Lem.~\ref{lem:VecSpline}.
  We first prove the auxiliary statement
  \begin{equation*}
    \scalar[\mathscr{H}^{(i)}]{\mathcal{A}^\ast g - \mathcal{A}^\ast\mathcal{A}s- \lambda s }{s - f} = 0
  \end{equation*}
  for all $f \in \mathscr{H}^{(i)}$. 
  For this purpose, we notice by means of the spline definition, \eqref{eq:VectorialSpline}, and the canonical matrix-vector product that
  \begin{equation*}
    \mathcal{A}_x s(x) = \left(\mathcal{A}_{x}^l \mathcal{A}_{z}^k \left(\mathfrak{k}^{(i)}(z, x)\right)\right)_{l,k =1,\dots,\ell} \alpha.
  \end{equation*}
  In addition, $\mathcal{A}^\ast$ can be applied to \eqref{eq:SplineMatrixVec} and the resulting equation can be  equivalently reformulated to
  \begin{equation*}
    \mathcal{A}^\ast \mathcal{A} s + \lambda \mathcal{A}^\ast \alpha = \mathcal{A}^\ast g.
  \end{equation*}
  Thus,
  \begin{equation*}
    \mathcal{A}^\ast g - \mathcal{A}^\ast\mathcal{A}s- \lambda s = \mathcal{A}^\ast \mathcal{A} s + \lambda \mathcal{A}^\ast \alpha - \mathcal{A}^\ast\mathcal{A}s- \lambda s = \lambda \left(\mathcal{A}^\ast \alpha - s \right).
  \end{equation*}
  It remains to prove that the right-hand side of the last equation vanishes. For all $f \in \mathscr{H}^{(i)}$ we have with item (3) of Lem.~\ref{lem:VecSpline} the relation
  \begin{equation*}
    \scalar[\mathscr{H}^{(i)}]{\mathcal{A}^\ast \alpha - s}{f} = \scalar[\mathbb{R}^\ell]{\alpha}{\mathcal{A} f}  - \scalar[\mathscr{H}^{(i)}]{s}{f} = \scalar[\mathbb{R}^\ell]{\alpha}{\mathcal{A} f}  - \alpha \cdot \mathcal{A}f = 0. 
  \end{equation*}
  For the regularized Tikhonov-functional, we obtain for all $f \in \mathscr{H}^{(i)}$ the estimate
  \begin{align*}
    &\phantom{=\ }\norm[\mathbb{R}^\ell]{g - \mathcal{A}f}^2 + \lambda \norm[\mathscr{H}^{(i)}]{f}^2 \\
    &= \norm[\mathbb{R}^\ell]{g - \mathcal{A}s + \mathcal{A}s - \mathcal{A}f}^2 + \lambda \norm[\mathscr{H}^{(i)}]{f-s + s}^2 \\
    &= \norm[\mathbb{R}^\ell]{g - \mathcal{A}s}^2 + \norm[\mathbb{R}^\ell]{\mathcal{A}s - \mathcal{A}f}^2 + 2 \scalar[\mathbb{R}^\ell]{g - \mathcal{A}s}{\mathcal{A}s - \mathcal{A}f} + \lambda \left(\norm[\mathscr{H}^{(i)}]{f-s}^2 +2\scalar[\mathscr{H}^{(i)}]{f-s}{s} +  \norm[\mathscr{H}^{(i)}]{s}^2\right) \\
    & \geq \norm[\mathbb{R}^\ell]{g - \mathcal{A}s}^2 + \lambda \norm[\mathscr{H}^{(i)}]{s}^2 +  2 \scalar[\mathbb{R}^\ell]{g - \mathcal{A}s}{\mathcal{A}s - \mathcal{A}f} + 2 \lambda \scalar[\mathscr{H}^{(i)}]{f-s}{s} \\
    &= \norm[\mathbb{R}^\ell]{g - \mathcal{A}s}^2 + \lambda \norm[\mathscr{H}^{(i)}]{s}^2 +  2 \left(\scalar[\mathscr{H}^{(i)}]{\mathcal{A}^\ast(g - \mathcal{A}s)}{s - f} - \scalar[\mathscr{H}^{(i)}]{\lambda s}{s-f}\right) \\
    &= \norm[\mathbb{R}^\ell]{g - \mathcal{A}s}^2 + \lambda \norm[\mathscr{H}^{(i)}]{s}^2 +  2 \scalar[\mathscr{H}^{(i)}]{\mathcal{A}^\ast g - \mathcal{A}^\ast\mathcal{A}s- \lambda s }{s - f} \\
    &= \norm[\mathbb{R}^\ell]{g - \mathcal{A}s}^2 + \lambda \norm[\mathscr{H}^{(i)}]{s}^2 \tag*{\qedhere}.
  \end{align*}
  Note that equality only holds if $s = f$.
\end{proof}

\begin{theorem}[Convergence Result]
  Let $\set{\mathcal{A}^k \mid k \in \mathbb{N}}$ be a countable, infinite, and linearly independent system of linear and continuous functionals. 
  For a fixed $f \in \mathscr{H}^{(i)}$ and every $N \in \mathbb{N}$, let the spline function $s_N$ be given by
  \begin{equation*}
    s_N \in \mathrm{Spline}\left(\left(\kappa_{m,n}^{(i)}\right)_{m,n}, \left(\mathcal{A}^1,\dots, \mathcal{A}^N\right)\right) \qquad \text{ with } \mathcal{A}^k s_N = \mathcal{A}^k f \quad \text{ for all } k=1,\dots, N.
  \end{equation*}
  If $\operatorname{span}\set{\mathcal{A}^k \mid k \in \mathbb{N}}$ is dense in the dual space $(\mathscr{H}^{(i)})^\ast$ of $\mathscr{H}^{(i)}$ then
  \begin{equation*}
    \lim_{N \to \infty} \norm[{\mathscr{H}^{(i)}}]{f - s_N} = 0.
  \end{equation*}
\end{theorem}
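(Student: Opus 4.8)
The plan is to realise each $s_N$ as the best $\mathscr{H}^{(i)}$-approximation of $f$ from a nested, eventually space-filling family of finite-dimensional subspaces, and then to feed the density hypothesis in through the Riesz representation theorem. Concretely, I would set $R^k \coloneqq \mathcal{A}^k_z\big(\mathfrak{k}^{(i)}(z,\cdot)\big)$ and $V_N \coloneqq \mathrm{Spline}\big((\kappa_{m,n}^{(i)})_{m,n}, (\mathcal{A}^1,\dots,\mathcal{A}^N)\big)$, so that by \eqref{eq:VectorialSpline} one has $V_N = \operatorname{span}\set[k=1,\dots,N]{R^k}$. Three observations underpin the argument. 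First, since the generating set of $V_N$ is contained in that of $V_{N+1}$, the subspaces are nested, $V_1\subseteq V_2\subseteq\cdots$, and each is finite-dimensional, hence closed. Second, by item (1) of Lem.~\ref{lem:VecSpline}, $R^k$ is precisely the Riesz representer of $\mathcal{A}^k$ in $\mathscr{H}^{(i)}$, that is $\scalar[\mathscr{H}^{(i)}]{R^k}{h} = \mathcal{A}^k h$ for all $h\in\mathscr{H}^{(i)}$. Third, the initial segment $\mathcal{A}^1,\dots,\mathcal{A}^N$ is linearly independent by hypothesis, so Thm.~\ref{thm:MNP} applies and its second minimum property states that $s_N$ is the unique minimiser of $\norm[\mathscr{H}^{(i)}]{f-\bar s}$ over $\bar s\in V_N$.

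From the first and third observations, the sequence $\big(\norm[\mathscr{H}^{(i)}]{f-s_N}\big)_{N}$ is nonincreasing and bounded below, hence convergent to some $d\ge 0$; it remains to show $d=0$. Here the second observation together with the density hypothesis enters. The Riesz map $\mathscr{H}^{(i)}\to(\mathscr{H}^{(i)})^\ast$, $h\mapsto\scalar[\mathscr{H}^{(i)}]{\cdot}{h}$, is a surjective linear isometry which sends $R^k$ to $\mathcal{A}^k$, hence carries $\operatorname{span}\set{R^k\mid k\in\mathbb{N}} = \bigcup_{N}V_N$ onto $\operatorname{span}\set{\mathcal{A}^k\mid k\in\mathbb{N}}$; being a homeomorphism, it also maps closures to closures. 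Consequently the assumed density of $\operatorname{span}\set{\mathcal{A}^k\mid k\in\mathbb{N}}$ in $(\mathscr{H}^{(i)})^\ast$ is equivalent to $\bigcup_{N}V_N$ being dense in $\mathscr{H}^{(i)}$. Given $\varepsilon>0$ I would then choose $g\in\bigcup_N V_N$ with $\norm[\mathscr{H}^{(i)}]{f-g}<\varepsilon$; as $g\in V_{N_0}$ for some $N_0$, the minimum property gives $\norm[\mathscr{H}^{(i)}]{f-s_N}\le\norm[\mathscr{H}^{(i)}]{f-g}<\varepsilon$ for every $N\ge N_0$. Hence $d\le\varepsilon$ for each $\varepsilon>0$, so $d=0$.

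Almost every step is routine. The point I would state with some care is the equivalence ``$\operatorname{span}\set{\mathcal{A}^k\mid k\in\mathbb{N}}$ dense in $(\mathscr{H}^{(i)})^\ast$ if and only if $\operatorname{span}\set{R^k\mid k\in\mathbb{N}}$ dense in $\mathscr{H}^{(i)}$'', which rests solely on the Riesz theorem and the elementary fact that a surjective isometry preserves density. The only other conceivable worry, whether the minimiser of $\norm[\mathscr{H}^{(i)}]{f-\bar s}$ over $V_N$ really coincides with the interpolating spline $s_N$ characterised by $\mathcal{A}^k s_N=\mathcal{A}^k f$ for $k=1,\dots,N$, is settled verbatim by Thm.~\ref{thm:MNP}, so I anticipate no genuine obstacle. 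Alternatively one can run the whole argument via the orthogonal projections $P_{V_N}$ onto the nested subspaces, which converge strongly to the projection onto $\overline{\bigcup_{N}V_N}=\mathscr{H}^{(i)}$, i.e.\ to the identity; but the monotone best-approximation argument above is the shortest route.
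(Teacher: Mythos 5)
Your proof is correct, but it takes a genuinely different route from the paper. The paper works entirely on the dual side: it uses the \emph{first} minimum property of Thm.~\ref{thm:MNP} to get the uniform bound $\norm[{\mathscr{H}^{(i)}}]{s_N} \leq \norm[{\mathscr{H}^{(i)}}]{f}$, deduces from the density hypothesis first the weak convergence $\mathcal{T}s_N \to \mathcal{T}f$ for every $\mathcal{T}\in(\mathscr{H}^{(i)})^\ast$ and then, via the dual-norm representation $\norm[{\mathscr{H}^{(i)}}]{f} = \sup_{\norm[(\mathscr{H}^{(i)})^\ast]{\mathcal{T}}\leq 1}\abs{\mathcal{T}f}$, the convergence of norms $\norm[{\mathscr{H}^{(i)}}]{s_N}\to\norm[{\mathscr{H}^{(i)}}]{f}$; weak convergence plus norm convergence in a Hilbert space then yields the claim. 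You instead pull the density hypothesis back to the primal space through the Riesz isometry, using item (1) of Lem.~\ref{lem:VecSpline} to identify $\mathcal{A}^k_z\bigl(\mathfrak{k}^{(i)}(z,\cdot)\bigr)$ as the Riesz representer of $\mathcal{A}^k$ and hence $\bigcup_N V_N$ as a dense subspace of $\mathscr{H}^{(i)}$, and then invoke the \emph{second} minimum property (best approximation from the nested spline spaces $V_N$) to conclude directly that $\norm[{\mathscr{H}^{(i)}}]{f-s_N}\leq\norm[{\mathscr{H}^{(i)}}]{f-g}<\varepsilon$ eventually. All the ingredients you use are available in the paper (unique solvability of the interpolation problem from Thm.~\ref{thm:VecSplineInterpolationProbl}, the best-approximation property from Thm.~\ref{thm:MNP}, the reproducing property from Lem.~\ref{lem:VecSpline}), and the transfer of density under a surjective isometry is standard, so there is no gap. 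What your argument buys is brevity and a cleaner structural picture ($s_N$ is the orthogonal projection of $f$ onto an increasing, eventually dense family of finite-dimensional subspaces, so convergence is immediate); what the paper's argument buys is that it never needs the explicit identification of the spline space with the span of Riesz representers and stays entirely in the language of functionals, at the cost of the two-step weak-plus-norm convergence argument.
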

\begin{proof}
  We prove the convergence result in two steps. First, we prove the weak convergence of the splines. 
  Second, the convergence of $\norm[{\mathscr{H}^{(i)}}]{s_N}$ to $\norm[{\mathscr{H}^{(i)}}]{f}$ is proven. 
  Combined we obtain the desired result.

  For the weak convergence we choose an arbitrary element $\mathcal{T} \in (\mathscr{H}^{(i)})^\ast$. 
  Since $\operatorname{span}\set{\mathcal{A}^k \mid k \in \mathbb{N}}$ is dense in the dual space, we find for all $\varepsilon > 0$ an $\tilde{N}\in \mathbb{N}$ and coefficients $(\alpha_k)_k$ such that 
  \begin{equation*}
    \tilde{\mathcal{T}} \coloneqq \sum_{k=1}^{\tilde{N}} \alpha_k \mathcal{A}^k
  \end{equation*}
  satisfies $\norm[{(\mathscr{H}^{(i)})^\ast}]{\mathcal{T} - \tilde{\mathcal{T}}} \leq \varepsilon$.
  In addition, we obtain with the interpolation condition of the spline function for all $N \geq \tilde{N}$ the relation
  \begin{equation*}
    \tilde{\mathcal{T}}f = \sum_{k=1}^{\tilde{N}} \alpha_k \mathcal{A}^k f = \sum_{k=1}^{\tilde{N}} \alpha_k \mathcal{A}^k s_{N} = \tilde{\mathcal{T}}s_{N}.
  \end{equation*}
  This yields together with the first minimum norm property of Thm.~\ref{thm:MNP}
  \begin{align*}
    \abs{\mathcal{T} f - \mathcal{T}s_{N}} &= \abs{\mathcal{T} f - \tilde{\mathcal{T}} f + \tilde{\mathcal{T}}{s_{N}} - \mathcal{T}s_{N}} \\
    &\leq \norm[{(\mathscr{H}^{(i)})^\ast}]{\mathcal{T}-\tilde{\mathcal{T}}}(\norm[{\mathscr{H}^{(i)}}]{f} + \norm[{\mathscr{H}^{(i)}}]{s_{N}}) \\
    &\leq 2 \varepsilon \norm[{\mathscr{H}^{(i)}}]{f},
  \end{align*}
  which implies the weak convergence.

  For the convergence of the norms, we use the representation, \cite[p. 91]{Yosida1988},
  \begin{equation*}
    \norm[\mathscr{H}^{(i)}]{f} = \sup_{\substack{\mathcal{T} \in \operatorname{span}\set{\mathcal{A}^k \mid k \in \mathbb{N}} \\ \norm[(\mathscr{H}^{(i)})^\ast]{\mathcal{T}} \leq 1}} \abs{\mathcal{T} f}.
  \end{equation*}
  Thus, for all $\varepsilon > 0$, we can find an $\tilde{N} \in \mathbb{N}$ and coefficients $(\alpha_k)_k$ such that the operator 
  \begin{equation*}
    \tilde{\mathcal{T}} \coloneqq \sum_{k=1}^{\tilde{N}} \alpha_k \mathcal{A}^k \quad \text{ with } \quad \norm[(\mathscr{H}^{(i)})^\ast]{\tilde{\mathcal{T}}} \leq 1
  \end{equation*}
  satisfies
  \begin{equation*}
    \norm[\mathscr{H}^{(i)}]{f} \leq \abs{\tilde{\mathcal{T}}f} + \varepsilon.
  \end{equation*}
  For all $N \geq \tilde{N}$ we obtain again with the first minimum norm property the estimate
  \begin{equation*}
    \norm[\mathscr{H}^{(i)}]{f} - \varepsilon \leq \abs{\tilde{\mathcal{T}}f} = \abs{\tilde{\mathcal{T}}s_N} \leq \norm[{(\mathscr{H}^{(i)})^\ast}]{\tilde{\mathcal{T}}} \norm[\mathscr{H}^{(i)}]{s_N} \leq \norm[\mathscr{H}^{(i)}]{s_N} \leq \norm[\mathscr{H}^{(i)}]{f},
  \end{equation*}
  which implies the convergence of the norms.
\end{proof}
Now, we have the theoretical foundations at hand in order to solve the vector-valued inverse MEG and EEG problems by means of RKHS splines. 

\section{Foundations for Implementation}\label{sec:Foundations}
\subsection{Scalar Splines for the Spherical MEG Problem} \label{sec:ScalarSpline}

In \cite{Fokas2012}, the presented scalar spline method is used for solving the scalar inverse MEG problem. 
However, therein a layer density constraint is used to achieve uniqueness of the inverse problem. 
Now, we want to use the minimum norm condition instead. 
For this purpose, the method needs to be adapted. 
Recall that the minimum norm condition is meant with respect to the entire neuronal current and not with respect to a scalar-valued part of it. 
Since the focus of this paper lies on the novel vector-valued spline method, we only give a short summary of the formulae required for the scalar spline implementation.
We use the following set of functions for the radial part of our orthonormal system
\begin{equation}\label{eq:GnBasis}
  G_n(r) \coloneqq \sqrt{\frac{2n+5}{\varrho_0^{3}}} \left(\frac{r}{\varrho_0}\right)^{n+1}, \qquad r \in [0, \varrho_0],\, n \in \mathbb{N}.
\end{equation}
Note that these functions fulfill the relation
\begin{equation*}
  G_n^\prime(\varrho_0)\varrho_0 - (n-1)G_n(\varrho_0) = 2\sqrt{\frac{2n+5}{\varrho_0^3}}, \qquad n \in \mathbb{N}.
\end{equation*}
In addition, due to the null space of our operator we choose $\kappa_0 \coloneqq 0$.
Eventually, we can conclude that the linear functionals stated in \eqref{eq:ScalarFunctionalMEG} applied to the corresponding reproducing kernel yield
\begin{equation}\label{eq:MEGscalarFunc2Kern}
  \mathcal{A}^k_{\mathrm{m}} K(x, \cdot) = \frac{-\mu_0}{2\pi\varrho_0^{2}}\nu(y_k)\cdot \left(\sum_{\substack{n\in\mathbb{N}\\\kappa_n \neq 0}} \sqrt{\frac{(2n+1)(2n+5)^2}{n+1}} \kappa_n^{-2} \frac{r^{n+1}}{s_k^{n+2}}  \tilde{p}_{n}^{(1)}(\eta_k,\xi)\right).
\end{equation}
Each entry of the spline matrix is given for all $l$, $k=1,\dots,{\ell_{\mathrm{M}}}$ by
\begin{equation}\label{eq:scalarSplineMatrixEntry}
  \mathcal{A}^l_{\mathrm{m},x} \mathcal{A}^k_{\mathrm{m},z} K(x, z) = \frac{\mu_0^2}{\varrho_0^3} \sum_{\substack{n\in\mathbb{N}\\\kappa_n \neq 0}} \sum_{j=1}^{2n+1} \frac{4(2n+5)}{{(n+1)(2n+1)}\kappa_n^{2}} \left(\frac{\varrho_0^2}{s_ks_l}\right)^{n+2} \left(\nu(y_k)\cdot\tilde{y}_{n,j}^{(1)}(\eta_k)\right) \left(\nu(y_l)\cdot \tilde{y}_{n,j}^{(1)}(\eta_l)\right). 
\end{equation}
An alternative representation, which is more suitable for implementation, is stated in \eqref{eq:scalarSplineMatrixEntryImpl}.

\subsection{Vector Splines for the Spherical MEG and EEG problem}

For the implementation of the vector spline method for the spherical MEG and EEG problem, we have to determine the occurring functionals and reproducing kernels with their corresponding vector Sobolev spaces. 

We do not want to go into details with the precise vector-valued integral representation of the forward operator and consider the SVD-based series representation instead, derived in \cite{Leweke2018,Leweke2020}.
Therein, it is verified that the series representations converge absolutely and uniformly on the particular domains. That is
\begin{align}
  \mathcal{A}_{\mathrm{M}}^{k} J 
    &= -\mu_0 \sum_{n=1}^\infty \sum_{j=1}^{2n+1} \sqrt{\frac{n \varrho_0}{(2n+1)(2n+3)}}  \scalar[\mathrm{L}^2(B_{\varrho_0},\mathbb{R}^3)]{J}{\tilde{g}^{(3)}_{0,n,j}(\varrho_0;\cdot)} \left(\frac{\varrho_0}{s_k}\right)^{n+1}\frac{1}{s_k} \nu(y_{k}) \cdot \tilde{y}_{n,j}^{(1)}(\eta_k), \label{eq:ForwardRMFPOp}\\
  \mathcal{A}_{\mathrm{E}}^k J &= \sum_{n=1}^\infty \sum_{j=1}^{2n+1} \frac{1}{\sqrt{n\varrho_0}} \beta^{(L)}_n \scalar[\mathrm{L}^2(B_{\varrho_0},\mathbb{R}^3)]{J}{\tilde{g}_{0,n,j}^{(i)}(\varrho_0;\cdot)} \left({(n+1)} \left(\frac{s_k}{\varrho_L}\right)^{2n+1} + {n} \right) \left(\frac{\varrho_0}{s_k}\right)^{n+1} Y_{n,j}(\eta_k), \label{eq:ForwardRMFPOp2}
\end{align}
where $y_k = s_k\eta_k$ for all $k=1,\dots,\ell_{\mathrm{M}}$ or $k=1,\dots,\ell_{\mathrm{E}}$, respectively, denotes the sensor positions of the measurement devices.
Recall that a definition of the used orthonormal basis function is stated in Thm.~\ref{thm:ONBBall}.
Due to properties of the orthonormal basis functions, we are able to calculate for all $m\in \mathbb{N}_0,\ n \in \mathbb{N}$ the value of $B_{m,n}^{(i)}$, see \eqref{app:Bmn} for more steps,
\begin{equation}
  B_{m,n}^{(i)} = \sup_{x \in B_{\varrho_L}} \sum_{j=1}^{2n+1} \abs{h_{m,n,j}^{(i)}} = \frac{\left(4m+2t_n^{(i)}+3\right)(2n+1)}{4\pi\varrho_L^3} \binom{m+t_n^{(i)}+1/2}{m}^2
\end{equation}

Now we need to determine appropriate RKHS with reproducing kernels. 
For this purpose, we set the sequences $\kappa^{(2)} \coloneqq (\kappa_{m,n}^{(2)})_{m,n}$ and $\kappa^{(3)} \coloneqq  (\kappa_{m,n}^{(3)})_{m,n}$ by 
\begin{equation*}\begin{aligned}
  \kappa_{m,n}^{(2)} &\coloneqq \kappa_n^{(2)} \delta_{m,0}, & 
  \kappa_{m,n}^{(3)} &\coloneqq \kappa_n^{(3)} \delta_{m,0} \qquad \text{for all } n\in\mathbb{N},\end{aligned}
\end{equation*}
where we furthermore assume that $\kappa_n^{(i)} \neq 0$ and $\kappa_0^{(i)} = 0$ for all $i\in\{2, 3\}$ and all $n\in\mathbb{N}$.
This reflects the structure of the integral operator null spaces.
We define accordingly the tensor-valued kernels
\begin{equation*}
  \mathfrak{k}^{(i)}(x, y) \coloneqq \sum_{n=1}^\infty \sum_{j=1}^{2n+1} \left(\kappa_{n}^{(i)}\right)^{-2} \tilde{g}_{0,n,j}^{(i)}(\varrho_0,x) \otimes \tilde{g}_{0,n,j}^{(i)}(\varrho_0,y), \qquad i\in\{2, 3\}.
\end{equation*}
Due to the choice of the sequence and the used orthonormal basis function, it is guaranteed that the corresponding spline function does not violate the minimum-norm condition of the neuronal current. 

The summability conditions are given by 
\begin{equation*}\begin{aligned}
  \sum_{n=1}^\infty \left(\kappa_{n}^{(3)}\right)^{-2} \frac{n}{2n+3} \left(\frac{\varrho_0}{\varrho_L}\right)^{2n+2} &< \infty, &
  \sum_{n=1}^\infty \left(\kappa_{n}^{(2)}\right)^{-2} \frac{(2n+1)^2}{n} \left(\frac{\varrho_0}{\varrho_{L-1}}\right)^{2n+2} &< \infty.\end{aligned}
\end{equation*}
According to Def.~\ref{defi:sumSequence}, they consist of the RKHS sequence $\kappa_{n}^{(i)}$, the bound of the orthonormal basis functions $B_{0,n}^{(i)}$, and a sequence $b_n^{(i)}$ which is closely related to the singular values of the operators, \cite[Thm.~6.1, Thm.~6.3]{Leweke2020}.
In order to use the presented vector-valued spline method, we need to verify that the functionals of \eqref{eq:ForwardRMFPOp} and \eqref{eq:ForwardRMFPOp2} are continuous linear functionals mapping from $\mathscr{H}^{(i)}(\kappa^{(i)}, B_{\varrho_0}, \mathbb{R}^3)$ to $\mathbb{R}$ for $i\in\{2,3\}$.  

For proving the boundedness of the functionals, we mainly use the summability conditions.
\begin{theorem}[Boundedness of the Vector Spline Functionals]\label{thm:continuousVecFunctionals}
  Let the functionals mapping the vector-valued current $J$ onto the measured quantities of the MEG and EEG device, be given by  \eqref{eq:ForwardRMFPOp} and \eqref{eq:ForwardRMFPOp2}. Let the Hilbert spaces $\mathscr{H}^{(i)}(\kappa^{(i)}, B_{\varrho_0}, \mathbb{R}^3)$ to $\mathbb{R}$ for $i\in\{2,3\}$ of vector valued functions on the ball be given. 
  Then $\mathcal{A}_{\mathrm{M}}^{k}\colon \mathscr{H}^{(3)}(\kappa^{(3)}, B_{\varrho_0}, \mathbb{R}^3)\to \mathbb{R}$ and $\mathcal{A}_{\mathrm{E}}^k \colon \mathscr{H}^{(2)}(\kappa^{(2)}, B_{\varrho_0}, \mathbb{R}^3)\to \mathbb{R}$ are linear and continuous functionals.
\end{theorem}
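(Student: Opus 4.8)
The plan is to verify linearity by inspection and then deduce boundedness from a two-stage Cauchy-Schwarz estimate whose surviving series in $n$ is, up to a $k$-dependent constant, exactly one of the summability series assumed above. Linearity is immediate: by \eqref{eq:ForwardRMFPOp} and \eqref{eq:ForwardRMFPOp2} each of $\mathcal{A}^k_{\mathrm{M}}$ and $\mathcal{A}^k_{\mathrm{E}}$ is an absolutely convergent series whose terms are scalar multiples of the linear maps $J \mapsto \scalar[\mathrm{L}^2(B_{\varrho_0},\mathbb{R}^3)]{J}{\tilde{g}^{(i)}_{0,n,j}(\varrho_0;\cdot)}$, so only continuity requires work. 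I will use throughout that, since $\kappa^{(i)}_{m,n} = \kappa^{(i)}_n \delta_{m,0}$, every $J \in \mathscr{H}^{(i)}(\kappa^{(i)}, B_{\varrho_0}, \mathbb{R}^3)$ has vanishing coefficients against $\tilde{g}^{(i)}_{m,n,j}(\varrho_0;\cdot)$ for $m \geq 1$ and, with $J^{\wedge}(i,n,j) \coloneqq \scalar[\mathrm{L}^2(B_{\varrho_0},\mathbb{R}^3)]{J}{\tilde{g}^{(i)}_{0,n,j}(\varrho_0;\cdot)}$, satisfies $\norm[\mathscr{H}^{(i)}]{J}^2 = \sum_{n=1}^\infty (\kappa^{(i)}_n)^2 \sum_{j=1}^{2n+1} \abs{J^{\wedge}(i,n,j)}^2$.

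For the MEG functional I would start from \eqref{eq:ForwardRMFPOp}, pass to absolute values (legitimate by the quoted absolute convergence), and apply Cauchy-Schwarz over $j$, using $\abs{\nu(y_k) \cdot \tilde{y}^{(1)}_{n,j}(\eta_k)} \leq \abs{\tilde{y}^{(1)}_{n,j}(\eta_k)}$ (as $\nu(y_k)$ is a unit vector) together with the norm-addition theorem \eqref{eq:AddThmAbs}, $\sum_{j=1}^{2n+1} \abs{\tilde{y}^{(1)}_{n,j}(\eta_k)}^2 = \tfrac{2n+1}{4\pi}$. A second Cauchy-Schwarz over $n$, after inserting the factor $\kappa^{(3)}_n (\kappa^{(3)}_n)^{-1}$, then gives $\abs{\mathcal{A}^k_{\mathrm{M}} J} \leq \norm[\mathscr{H}^{(3)}]{J}$ times the square root of
\begin{equation*}
  \frac{\mu_0^2 \varrho_0}{4\pi s_k^2} \sum_{n=1}^\infty (\kappa^{(3)}_n)^{-2}\, \frac{n}{2n+3}\, \left(\frac{\varrho_0}{s_k}\right)^{2n+2}.
\end{equation*}
Since the MEG sensors obey $s_k \geq \varrho_L$, one has $(\varrho_0/s_k)^{2n+2} \leq (\varrho_0/\varrho_L)^{2n+2}$, so this series is bounded by a constant multiple of $\sum_{n=1}^\infty (\kappa^{(3)}_n)^{-2} \tfrac{n}{2n+3} (\varrho_0/\varrho_L)^{2n+2}$, which is finite by the summability condition for $\kappa^{(3)}$. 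Hence $\mathcal{A}^k_{\mathrm{M}}$ is bounded.

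The EEG case is treated in exactly the same way, starting from \eqref{eq:ForwardRMFPOp2}: apply Cauchy-Schwarz over $j$ with the scalar addition theorem $\sum_{j=1}^{2n+1} \abs{Y_{n,j}(\eta_k)}^2 = \tfrac{2n+1}{4\pi}$ (cf.\ \eqref{eq:AdditionThm}), note that for electrodes on the scalp $s_k = \varrho_L$, so the bracket $(n+1)(s_k/\varrho_L)^{2n+1} + n$ collapses to $2n+1$, and apply Cauchy-Schwarz over $n$ with the split $\kappa^{(2)}_n (\kappa^{(2)}_n)^{-1}$. One then obtains $\abs{\mathcal{A}^k_{\mathrm{E}} J} \leq \norm[\mathscr{H}^{(2)}]{J}$ times the square root of
\begin{equation*}
  \frac{1}{4\pi \varrho_0} \sum_{n=1}^\infty (\kappa^{(2)}_n)^{-2}\, (\beta^{(L)}_n)^2\, \frac{(2n+1)^3}{n}\, \left(\frac{\varrho_0}{\varrho_L}\right)^{2n+2},
\end{equation*}
and the remaining task is to bound this by a constant multiple of $\sum_{n=1}^\infty (\kappa^{(2)}_n)^{-2} \tfrac{(2n+1)^2}{n} (\varrho_0/\varrho_{L-1})^{2n+2}$. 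This is where the asymptotics of the multiple-shell coefficients $\beta^{(L)}_n$ from \cite{Leweke2018,Leweke2020} are used: they yield $(\beta^{(L)}_n)^2 (2n+1) (\varrho_0/\varrho_L)^{2n+2} \leq c\, (\varrho_0/\varrho_{L-1})^{2n+2}$ for a suitable $c > 0$, so the displayed series is at most $\tfrac{c}{4\pi\varrho_0}$ times the summability sum for $\kappa^{(2)}$ and is therefore finite.

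The two Cauchy-Schwarz steps and the use of the addition theorems are routine; the genuine obstacle is the last comparison in the EEG case, namely matching the leftover series in $n$ — with its polynomial-in-$n$ weights and the factor $(\beta^{(L)}_n)^2$ — against the exact shape of the summability condition for $\kappa^{(2)}$, which rests on the geometric separation $\varrho_0 < \varrho_{L-1} < \varrho_L$ and on the precise $n \to \infty$ behaviour of $\beta^{(L)}_n$; indeed, the bound sequence $b^{(2)}_n$ of Def.~\ref{defi:sumSequence} is tailored to absorb exactly these singular-value-type weights. Absolute convergence of all the series involved, which is what makes the term-by-term estimates legitimate, is already guaranteed by the convergence statements recalled for \eqref{eq:ForwardRMFPOp}--\eqref{eq:ForwardRMFPOp2}.
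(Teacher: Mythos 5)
Your proposal is correct and follows essentially the same route as the paper's proof: Cauchy--Schwarz in $j$ and $n$ combined with the addition theorems \eqref{eq:AddThmAbs} and \eqref{eq:AdditionThm}, reduction of the leftover series to the stated summability conditions, and in the EEG case the bound $\sup_{n}(2n+1)\bigl(\beta^{(L)}_n\bigr)^2 \leq C$ from the cited references (which is exactly the inequality you invoke, since $\varrho_{L-1}<\varrho_L$ makes the radius comparison automatic). The only cosmetic difference is that you fix $s_k=\varrho_L$ for the EEG electrodes while the paper keeps the estimate valid for $s_k\geq\varrho_{L-1}$; this does not affect correctness.
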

\begin{proof}
  The proof can be found in Appendix~\ref{sec:AppSupplImplement}. 
\end{proof}

Now we can determine the following expression required for the representation of the corresponding MEG and EEG spline.
\begin{lemma}\label{lem:reprVecRKHSKernels}
The MEG and EEG functionals applied to the tensor-valued reproducing kernels, have the representations
\begin{align*}
  \mathcal{A}_{\mathrm{M}}^{k}  \mathfrak{k}^{(3)}(\cdot,x)
  &= -\frac{\mu_0}{4\pi} \sum_{n=1}^\infty \left(\kappa_{n}^{(3)}\right)^{-2} \frac{r^n}{s_k^{n+2}} \left((\nu(y_k) \cdot \eta_k) (\xi \wedge \eta_k) P_n^\prime(\xi \cdot \eta_k) \phantom{\frac{1}{{n+1}}}\right.\\
  &\times \left. -\frac{1}{{n+1}}\left( (\xi \wedge \eta_k) P_n^{\prime\prime}(\xi \cdot \eta_k)\left(\xi\cdot(\nu(y_k)-(\nu(y_k)\cdot\eta_k)\eta_k)\right) + P_n^\prime(\xi \cdot \eta_k) \xi \wedge (\nu(y_k) - (\nu(y_k)\cdot\eta_k)\eta_k) \right)\right) \\
   \mathcal{A}_{\mathrm{E}}^k \mathfrak{k}^{(2)}(\cdot,x) 
  &= \frac{1}{4\pi}\sum_{n=1}^\infty \left(\kappa_{n}^{(2)}\right)^{-2} \frac{(2n+1)^{3/2}}{\sqrt{n}} \frac{r^{n-1}}{s_k^{n+1}} \beta^{(L)}_n \left({(n+1)} \left(\frac{s_k}{\varrho_L}\right)^{2n+1} + {n} \right) \tilde{p}_{n}^{(2)}(\xi;\eta_k).
\end{align*}
\end{lemma}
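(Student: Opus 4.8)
The plan is to apply each functional to the tensor kernel term by term, collapse the resulting double series by orthonormality of the basis $\{\tilde g^{(i)}_{0,n,j}(\varrho_0;\cdot)\}$ from Thm.~\ref{thm:ONBBall}, and then evaluate the remaining angular sums with addition theorems. First I would unfold the definition \eqref{eq:Func2Tensor} of a functional acting on a tensor-valued field: since $\mathcal{A}_{\mathrm{M}}^k$ and $\mathcal{A}_{\mathrm{E}}^k$ are bounded linear functionals on $\mathscr{H}^{(3)}$ resp.\ $\mathscr{H}^{(2)}$ by Thm.~\ref{thm:continuousVecFunctionals}, and the kernel series \eqref{eq:TensRepKer} converges in the $\mathscr{H}^{(i)}$-norm in the variable acted upon, one may interchange the functional with the sum, obtaining $\mathcal{A}^k_z\mathfrak{k}^{(i)}(z,x)=\sum_{n,j}(\kappa_n^{(i)})^{-2}\big(\mathcal{A}^k\tilde g^{(i)}_{0,n,j}(\varrho_0;\cdot)\big)\,\tilde g^{(i)}_{0,n,j}(\varrho_0;x)$.

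Next I would compute $\mathcal{A}^k\tilde g^{(i)}_{0,n,j}(\varrho_0;\cdot)$ from the SVD representations \eqref{eq:ForwardRMFPOp}, \eqref{eq:ForwardRMFPOp2}: orthonormality of the $\tilde g$-system annihilates all summands except the one with matching index, leaving an explicit scalar multiple of $\nu(y_k)\cdot\tilde y^{(1)}_{n,j}(\eta_k)$ in the MEG case and of $Y_{n,j}(\eta_k)$ in the EEG case. I would then insert the explicit form of $\tilde g^{(i)}_{0,n,j}(\varrho_0;x)$, using $P^{(0,\cdot)}_0\equiv 1$, $t^{(3)}_n=n$ and $t^{(2)}_n=n-1$, so that $\tilde g^{(i)}_{0,n,j}(\varrho_0;x)=\sqrt{(2t^{(i)}_n+3)/\varrho_0^3}\,(r/\varrho_0)^{t^{(i)}_n}\,\tilde y^{(i)}_{n,j}(\xi)$. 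For the EEG kernel this finishes the structural part: the angular sum $\sum_{j}Y_{n,j}(\eta_k)\,\tilde y^{(2)}_{n,j}(\xi)$ is precisely the vectorial addition theorem in \eqref{eq:AdditionThm}, equal to $\tfrac{2n+1}{4\pi}\tilde p^{(2)}_n(\xi,\eta_k)$; collecting the scalar prefactors, checking that all powers of $\varrho_0$ cancel and that the $n$-dependent constants combine to $(2n+1)^{3/2}/\sqrt n$, yields the claimed expression.

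For the MEG kernel the remaining angular object is $\sum_j\big(\nu(y_k)\cdot\tilde y^{(1)}_{n,j}(\eta_k)\big)\tilde y^{(3)}_{n,j}(\xi)$. Here I would write $\tilde y^{(3)}_{n,j}(\xi)=(\tilde\mu^{(3)}_n)^{-1/2}L^\ast_\xi Y_{n,j}(\xi)$ and $\nu(y_k)\cdot\tilde y^{(1)}_{n,j}(\eta_k)=(\tilde\mu^{(1)}_n)^{-1/2}\big(D_\eta Y_{n,j}\big)(\eta_k)$ with the scalar operator $D_\eta:=(n+1)(\nu(y_k)\cdot\eta)-\nu(y_k)\cdot\nabla^\ast_\eta$ acting in $\eta$, pull the linear operators $L^\ast_\xi$ and $D_\eta$ out of the $j$-sum, and apply the scalar addition theorem \eqref{eq:AdditionThm} to reduce the sum to $\tfrac{2n+1}{4\pi}\,L^\ast_\xi\big[D_\eta P_n(\xi\cdot\eta)\big]\big|_{\eta=\eta_k}$. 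Computing $\nabla^\ast_\eta(\xi\cdot\eta)$ and $\nabla^\ast_\xi(\xi\cdot\eta)$ as the tangential projections of $\xi$ resp.\ $\eta$, using $\xi\times\xi=0$, and rewriting $\nu(y_k)\cdot\xi-(\xi\cdot\eta_k)(\nu(y_k)\cdot\eta_k)=\xi\cdot(\nu(y_k)-(\nu(y_k)\cdot\eta_k)\eta_k)$, one finds that the $P_n$-term in $D_\eta P_n$ carries an explicit factor $(n+1)$ which cancels the factor $(n+1)^{-1}$ produced when $(\tilde\mu^{(1)}_n\tilde\mu^{(3)}_n)^{-1/2}$, the radial normalisation and $\tfrac{2n+1}{4\pi}$ are multiplied together, whereas the $P_n'$-term survives with that $(n+1)^{-1}$; this reproduces the two groups in the stated formula (one involving $P_n'$ only, one with the $(n+1)^{-1}$ in front of the $P_n''$ and $P_n'$ terms).

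I expect the MEG angular computation to be the main obstacle: one must keep careful track of which variable each operator differentiates — the variable $\xi$ occurs both inside the Legendre argument and as the axis of $L^\ast_\xi=\xi\times\nabla^\ast_\xi$ — and the bookkeeping of the constant sequences $\sqrt{n\varrho_0/((2n+1)(2n+3))}$, $\sqrt{(2n+3)/\varrho_0^3}$, $(\tilde\mu^{(1)}_n\tilde\mu^{(3)}_n)^{-1/2}$ and $\tfrac{2n+1}{4\pi}$ has to be done precisely to exhibit the clean collapse to the prefactor $-\tfrac{\mu_0}{4\pi}(\kappa_n^{(3)})^{-2}r^n s_k^{-(n+2)}$ and the asymmetric appearance of $(n+1)^{-1}$. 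The term-by-term application of the functionals (interchange with the infinite series) is routine given Thm.~\ref{thm:continuousVecFunctionals} and the convergence established for \eqref{eq:TensRepKer}, and the EEG part is entirely straightforward once the general scheme is in place.
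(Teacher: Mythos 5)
Your proposal is correct and follows essentially the same route as the paper's proof: apply the functional term-wise to the kernel series, use orthonormality of the $\tilde g^{(i)}_{0,n,j}$ against the SVD representations \eqref{eq:ForwardRMFPOp}--\eqref{eq:ForwardRMFPOp2}, insert the explicit basis functions, and then remove the $j$-sum via the (scalar and vectorial) addition theorems together with surface vector calculus, with the same cancellation of the $(n+1)$-factors you describe. The only difference is cosmetic: you pull $L^\ast_\xi$ and the scalar operator $D_\eta$ out of the sum and differentiate the scalar addition theorem, whereas the paper splits the type-$1$ harmonic at $\eta_k$ and invokes the type-$3$ vectorial addition theorem — these are the same computation.
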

\begin{proof}
  The proof can be found in Appendix~\ref{sec:AppSupplImplement}. 
\end{proof}

\begin{theorem}\label{thm:splineMatrixVector}
  The entries of the vector spline matrices for the MEG and EEG case are given by
  \begin{align*}
    \mathcal{A}_{\mathrm{M}}^l \mathcal{A}_{\mathrm{M}}^k \left(\mathfrak{k}^{(3)}(\cdot, \cdot)\right)
    &= \frac{\mu_0^2}{\varrho_0} \sum_{n=1}^\infty \sum_{j=1}^{2n+1} 
    {\frac{n\, (\kappa_{n}^{(3)})^{-2} }{(2n+1)(2n+3)}} \left(\frac{\varrho_0^2}{s_ls_k}\right)^{n+2} \left(\nu(y_{l}) \cdot \tilde{y}_{n,j}^{(1)}(\eta_l)\right)\left( \nu(y_{k}) \cdot \tilde{y}_{n,j}^{(1)}(\eta_k)\right), \\
    \mathcal{A}_{\mathrm{E}}^l \mathcal{A}_{\mathrm{E}}^k \left(\mathfrak{k}^{(2)}(\cdot, \cdot)\right)
    &= \frac{1}{4\pi}\sum_{n=1}^\infty \left(\kappa_{n}^{(2)}\right)^{-2}  \frac{2n+1}{{n\varrho_0}} \left(\beta^{(L)}_n\right)^2 \left(\frac{\varrho_0^2}{s_ls_k}\right)^{n+1}   \left({(n+1)} \left(\frac{s_l}{\varrho_L}\right)^{2n+1} + {n} \right) \\
    &\phantom{=} \times \left({(n+1)} \left(\frac{s_k}{\varrho_L}\right)^{2n+1} + {n} \right) P_{n}(\eta_l\cdot\eta_k) . 
  \end{align*}
\end{theorem}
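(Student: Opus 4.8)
\medskip

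\noindent\textbf{Proof proposal.} The plan is to expand each tensor-valued reproducing kernel in its defining series and apply the two evaluation functionals term by term. Abbreviate $h^{(i)}_{0,n,j} \coloneqq \tilde{g}^{(i)}_{0,n,j}(\varrho_0;\cdot)$ for the orthonormal basis functions generating $\mathscr{H}^{(i)}$, $i\in\{2,3\}$; recall that the construction sets $\kappa^{(i)}_{m,n} = \kappa^{(i)}_n\delta_{m,0}$, so only the radial index $m=0$ contributes. Carrying out the same manipulation as in the proof of Lem.~\ref{lem:VecSpline} — first applying $\mathcal{A}^k_z$ to $\mathfrak{k}^{(i)}(z,x)$ in the $z$-slot via \eqref{eq:Func2Tensor}, which produces $\sum_{n,j}(\kappa^{(i)}_n)^{-2}(\mathcal{A}^k h^{(i)}_{0,n,j})\,h^{(i)}_{0,n,j}(x)$, and then applying $\mathcal{A}^l_x$ — gives
\begin{equation*}
  \mathcal{A}^l_x \mathcal{A}^k_z \mathfrak{k}^{(i)}(z,x) = \sum_{n=1}^\infty \sum_{j=1}^{2n+1} \left(\kappa^{(i)}_n\right)^{-2} \left(\mathcal{A}^k h^{(i)}_{0,n,j}\right)\left(\mathcal{A}^l h^{(i)}_{0,n,j}\right).
\end{equation*}
The interchange of the functionals with the infinite sum is legitimate by their linearity and continuity (Thm.~\ref{thm:continuousVecFunctionals}) together with the convergence of the kernel series secured by the summability conditions; alternatively one may invoke Lem.~\ref{lem:VecSpline}(2) to write the entry as an $\mathscr{H}^{(i)}$-Gramian of the functions computed in Lem.~\ref{lem:reprVecRKHSKernels} and expand them in the $\mathscr{H}^{(i)}$-orthonormal basis $(\kappa^{(i)}_n)^{-1}h^{(i)}_{0,n,j}$, which yields the same series.

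Next I would read off $\mathcal{A}^k h^{(i)}_{0,n,j}$ directly from the series representations \eqref{eq:ForwardRMFPOp}, \eqref{eq:ForwardRMFPOp2}: since $\{\tilde{g}^{(i)}_{0,n,j}(\varrho_0;\cdot)\}$ is $\mathrm{L}^2(B_{\varrho_0},\mathbb{R}^3)$-orthonormal, the inner product $\scalar[\mathrm{L}^2(B_{\varrho_0},\mathbb{R}^3)]{J}{\tilde{g}^{(i)}_{0,n,j}(\varrho_0;\cdot)}$ collapses to a single Kronecker delta when $J = h^{(i)}_{0,n',j'}$, so that
\begin{equation*}
  \mathcal{A}^k_{\mathrm{M}} h^{(3)}_{0,n,j} = -\mu_0 \sqrt{\frac{n\varrho_0}{(2n+1)(2n+3)}}\left(\frac{\varrho_0}{s_k}\right)^{n+1}\frac{1}{s_k}\,\nu(y_k)\cdot\tilde{y}^{(1)}_{n,j}(\eta_k),
\end{equation*}
\begin{equation*}
  \mathcal{A}^k_{\mathrm{E}} h^{(2)}_{0,n,j} = \frac{1}{\sqrt{n\varrho_0}}\,\beta^{(L)}_n\left((n+1)\left(\frac{s_k}{\varrho_L}\right)^{2n+1}+n\right)\left(\frac{\varrho_0}{s_k}\right)^{n+1} Y_{n,j}(\eta_k).
\end{equation*}
Substituting these into the displayed formula and collecting radial powers — using $(\varrho_0/s_l)^{n+1}s_l^{-1}(\varrho_0/s_k)^{n+1}s_k^{-1} = \varrho_0^{-2}(\varrho_0^2/(s_ls_k))^{n+2}$ in the MEG case and $(\varrho_0/s_l)^{n+1}(\varrho_0/s_k)^{n+1} = (\varrho_0^2/(s_ls_k))^{n+1}$ in the EEG case — already yields the MEG entry; for EEG there remains the angular factor $\sum_{j=1}^{2n+1} Y_{n,j}(\eta_l)Y_{n,j}(\eta_k)$, which the scalar addition theorem \eqref{eq:AdditionThm} turns into $\frac{2n+1}{4\pi}P_n(\eta_l\cdot\eta_k)$. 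Bookkeeping the constants $-\mu_0$, $\mu_0^2$, $\beta^{(L)}_n$, $n$, and $2n+1$ then produces the two stated expressions.

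I do not anticipate a genuine obstacle here: the only delicate point is the justification of the termwise action of the (a priori unbounded-looking) evaluation functionals on the infinite kernel series, which is exactly where Thm.~\ref{thm:continuousVecFunctionals} and the summability conditions are needed. Everything after that is routine algebraic simplification, with the addition theorem \eqref{eq:AdditionThm} contracting the $j$-sum in the EEG case while the $j$-sum is retained in the MEG case because the sensor normals $\nu(y_k)$ prevent an equally clean closure there.
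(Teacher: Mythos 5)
Your proposal is correct and follows essentially the same route as the paper's proof: expand the tensor kernel series, apply the functionals termwise (which is exactly the first transformation in the proof of Lem.~\ref{lem:reprVecRKHSKernels}), read off $\mathcal{A}^k\tilde{g}^{(i)}_{0,n,j}(\varrho_0;\cdot)$ from the series representations \eqref{eq:ForwardRMFPOp}--\eqref{eq:ForwardRMFPOp2} via $\mathrm{L}^2$-orthonormality, collect the radial powers, and contract the $j$-sum with the addition theorem only in the EEG case. Your constant bookkeeping matches the stated formulas, so there is nothing to add.
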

\begin{proof}
  The proof can be found in Appendix~\ref{sec:AppSupplImplement}. 
\end{proof}

\section{Numerics}\label{sec:numerics}
In this section, we present the numerical results achieved by the scalar and vector based RKHS spline method. 
First we discuss the MEG case in more detail. 
As stated before, there exist several approaches to solve this inverse problem. 
On the one hand, the scalar approach is discussed, where the neuronal current $J$ is decomposed by means of the Helmholtz decomposition. 
This approach results in a relation between the scalar quantity $A^{(1)}$ and the measured data.
Via the scalar spline method the quantity $A^{(1)}$ can be reconstructed from the data. 
Afterwards, this result can be transferred to a reconstruction of the vector-valued current $J$ by the expression, \cite[Sec.~20.2.4.]{Leweke2020},
\begin{equation}\label{eq:currentFromScalar}
J(x) = - \frac{4\pi}{\varrho_0^2} \sum_{\substack{n\in\mathbb{N} \\ \kappa_n \neq 0}}\sum_{j=1}^{2n+1} \frac{(2n+3)(2n+5)}{(n+1)\sqrt{n(2n+1)}} \kappa_n^{-2} \left(\sum_{k=1}^{\ell_{\mathrm{M}}} \alpha_k \frac{r^n}{s_k^{n+2}} \nu(y_k)\cdot\tilde{y}_{n,j}^{(1)}(\eta_k)\right) \tilde{y}_{n,j}^{(3)}(\xi).
\end{equation}
For the implementation, the addition theorem should be used in order to get rid of the summation over the spherical harmonics of order $j$. 
The particular calculation can be found in the appendix, \eqref{eq:proofA2}.
On the other hand, by means of the SVD-based method a direct relation between the vector-valued quantity $J$ and its measured effect is achieved. 
Using the vector spline method, $J$ can directly be reconstructed from the data. 
Before we compare these two approaches, we need to introduce the setup used for the numerics. 

For solving the inverse MEG and EEG problem, we use the three-shell model, which is the multiple-shell model for $L=3$. In addition, the radii of the shells and their conductivities are assumed to be
\begin{equation}
\begin{aligned}
  \varrho_0 &= \SI{0.071}{\metre}, & \varrho_1 &= \SI{0.072}{\metre},& \varrho_2 &= \SI{0.079}{\metre}, & \varrho_3 &= \SI{0.085}{\metre} , \\
  \sigma_0 &= \SI{0.330}{\siemens\per\metre}, & \sigma_1 &= \SI{1.000}{\siemens\per\metre}, & \sigma_2 &= \SI{0.042}{\siemens\per\metre}, & \sigma_3 &= \SI{0.330}{\siemens\per\metre}.
\end{aligned}
\end{equation}
These values are going back to \cite{Fokas2012} and are based on considerations from \cite{HE2005}. 
In addition, we use a set of real setting of sensor positions collected during a clinical trial of the CBU, Cambridge, UK.
They are plotted in relation to a ball modeling the cerebrum in Fig.~\ref{fig:sensorPosition}. 
Therein, we can see the irregular distribution of the measurement positions. 
In the MEG case, there is a huge data gap in the facial area. 
In the EEG case, almost all sensor positions are located in the upper hemisphere. 

\newlength{\figurewidth}
\begin{figure}
\setlength{\figurewidth}{0.38\textwidth}\centering
  \includegraphics[width=\figurewidth]{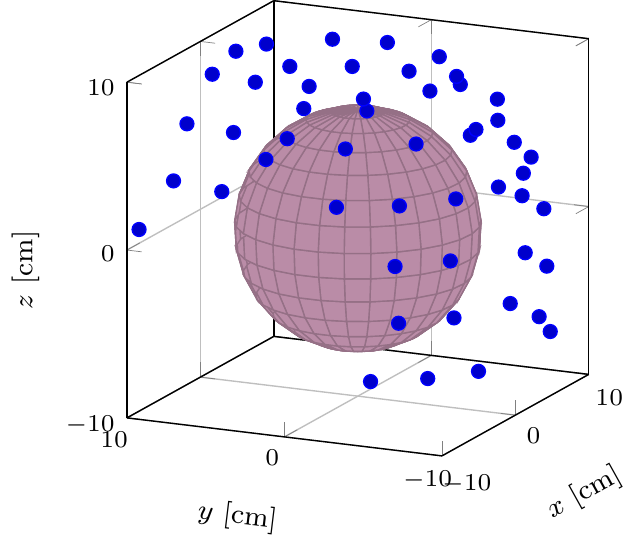}
  \includegraphics[width=\figurewidth]{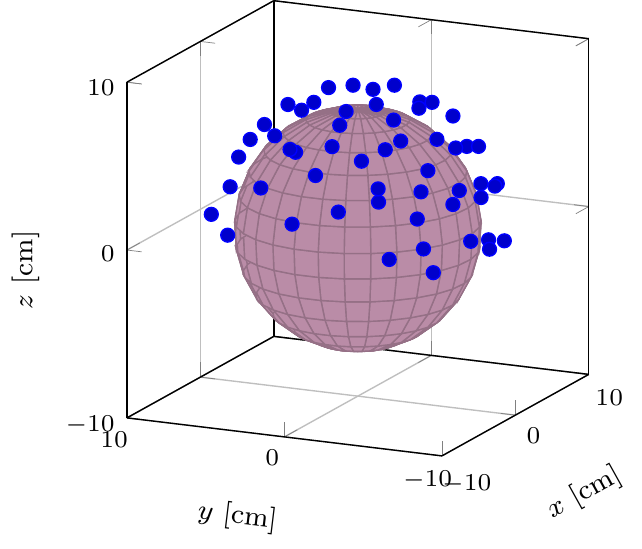}
\caption{Position of the MEG (left) and EEG (right) sensors around the cerebrum modeled by a ball with radius $\varrho_0$}\label{fig:sensorPosition}
\end{figure}
For our numerical tests, we create a realistic but synthetic test current by means of a linear combination of two splines. 
Then, the forward functional is applied to this spline for generating the data. 

In order to avoid the inverse crime, we consider the following aspects:
\begin{itemize}
  \item The splines and its corresponding RKHS used for generating the data must be different than those used for the inversion. In the case of generating the data, we use the sequence
  \begin{equation}\label{eq:sequenceSynthDataSpline}
    \kappa_{n,\text{data}}^{-2} \coloneqq \frac{h_{\text{data}}^{2n+2}}{(2n+5)(2n+1)}, \qquad n \in \mathbb{N}_0,
  \end{equation}
  for building the reproducing kernels. 
  For the free parameter, we choose $h_{\text{data}} = 0.8$. 
  Due to the Legendre series, the corresponding series representation of the reproducing kernel (see also Sec.~\ref{sec:ScalarSpline}) has a closed form based on the Legendre series, \cite[Eq.~(3.2.32)]{Freeden1998} , that is
  \begin{equation*}
    K(x,z) = \frac{1}{4\pi\varrho_0^3} \sum_{n=0}^\infty h_{\text{data}}^{2n+2} \left(\frac{r|z|}{\varrho_0^2}\right)^{n+1} P_n\left(\xi\cdot\frac{z}{|z|}\right)=  \frac{1}{4\pi\varrho_0^4} \frac{hr}{\left| \frac{h_{\text{data}}}{\varrho_0}x - \frac{\varrho_0}{h_{\text{data}} |z|^3} z\right|} .
  \end{equation*}
  \item We do not calculate the forward functional by the SVD-based method stated in \eqref{eq:MEGscalarFunc2Kern}. 
  Instead, we consider the integral representation of the forward function from \eqref{eq:ScalarFunctionalMEG}. More precisely,
  {\begin{equation*}
    \begin{multlined}[0.8\textwidth]
      \phantom{adfsdaf}\mathcal{A}^l_{\mathrm{m},x} \mathcal{A}^k_{\mathrm{m},z} K(x,z)  = \\ 
      \phantom{adfsd}\mu_0^2  \int_{B_{\varrho_0} \times B_{\varrho_0}} \Delta_x \left(|x| \Delta_z \left( |z| K(x,z)\right)\right) \left(\nu(y_l)\cdot\left(\nabla_y K_{\mathrm{m}}(z,y)\right|_{y = y_l}\right) \left(\nu(y_k) \cdot \left(\nabla_y K_{\mathrm{m}}(x,y) \middle)\right|_{y = y_k}\right) \, \mathrm{d}(x,z).\end{multlined}
    \end{equation*}}
    Note that we interchanged the Laplacian with the integration. 
    This is valid, due to the integrability and smoothness of the kernel $K$, see \cite[Cor.~16.3.]{Bauer2001} and \cite[Thm.~7.13]{Leweke2018}.
    In addition, by means of the aid of Mathematica \cite{Mathematica10} we are able to state the identity
  \begin{equation}\label{eq:closedLaplace}
    \phantom{adfsd}\Delta_z \left( |z| K(x,z)\right) = \frac{h}{2\pi\varrho_0^2} |x| \left(3\frac{\varrho_0^2}{h^2} - 4 x \cdot z + \left(\frac{h|x||z|}{\varrho_0}\right)^2\right)\left(\frac{\varrho_0^2}{h^2} - 2x\cdot z + \left(\frac{h|x||z|}{\varrho_0}\right)^2\right)^{-3/2}.
  \end{equation}
  \item The Laplacian with respect to $x$ is calculated numerically by a finite difference method of accuracy order \num{8}, \cite[Tab.~1]{Fornberg1988}. 
  \item The closed representation of the integral kernels gradient is used, \cite[Thm.~15.15]{Leweke2018},
  \begin{align}
    \phantom{adh} 4\pi \nabla_y K_{\mathrm{m}}(x,y) \coloneqq& \nabla_y \sum_{k=1}^\infty \frac{r^k}{s^{k+1} (k+1)} P_k(\xi \cdot \eta)\label{eq:repr_kernel} \\
    =& \begin{cases}
    \left( \frac{1}{sr((\xi\cdot\eta)^2-1)} \left[\left(\frac{s-x\cdot\eta}{|x-y|}-1\right)\xi - \left(\frac{\xi\cdot y-r}{|x-y|}- \xi\cdot\eta \right)\eta\right] + \frac{1}{s^2}\eta\right), & |\xi\cdot \eta| \neq 1, \\
    \left(\frac{1}{s(s-r)} + \frac{1}{s^2}\right)\eta, & \xi\cdot \eta = 1, \\
    \left(\frac{1}{s(s+r)} - \frac{1}{s^2}\right)\eta, & \xi\cdot \eta = -1. \\
    \end{cases}
  \end{align}
  \item The integration is performed by a quasi-Monte Carlo method, \cite{Dick2013}, with \num{4e6} integration points distributed via a quasi-random Kronecker sequence $t_n = \{n \gamma\}$, $n \in \{1,2,\dots,\num{4e6}\}$, and $\gamma = (1/\Phi_d^i)_{i = 1,\dots,d}$ based on the $d$-dimensional generalization of the golden ratio $\Phi_d$ as the solution of $x^{d+1} = x +1$, \cite{Roberts2018};
  \item We put additive Gaussian noise with $1$, $2$, $5$, and $10$ percent strength on the data.
\end{itemize}
On the one hand, these steps lead us to a numerical method for calculating the entries of the spline matrix. 
We will refer to this method as the numerical analysis method, since it contains numerical differentiation and integration.
On the other hand, we have the possibility to calculate the spline matrix based on the vector spherical harmonic expansion which is related to an SVD of the operator, that is \eqref{eq:scalarSplineMatrixEntry} combined with the addition theorem stated in \eqref{eq:scalarSplineMatrixEntryImpl}. 
This ansatz will be referred to as the SVD-based approach and yields to a series representation for each entry. 
These series of Legendre polynomials can be summed up efficiently by means of Clenshaw's algorithm. 
Based on Clenshaw's algorithm, we have implemented a highly vectorized and optimized code using MATLAB R2021a, \cite{MATLAB:2021}.
For determining the run time of \SI{0.4285(69)}{\second} in the case of \num{500} summands per series, we have run this algorithm \num{1000} times sequentially on a local machine. 
In contrast, this approach requires comprehensive knowledge of the forward operator in order to obtain the series representation. 
This theoretical effort is rewarded with a stable and very fast numerical method.
For the numerical analysis method, theoretical effort is required in order to gain the closed representations of the kernels. 
Afterwards the numerical differentiation can be operated fast by using vectorized code. 
However, each spline matrix entry requires a numerical integration over $B_{\varrho_0}\times B_{\varrho_0}$. 
In order to achieve an acceptable accuracy, \num{4e6} integration points are required for each integration over the three-dimensional ball $B_{\varrho_0}$. 
We also vectorized and parallelized the code and operated it on a node of the OMNI cluster (University of Siegen, 64 CPU cores, AMD EPYC 7452 at \SIrange{2.35}{3.35}{\giga\hertz}, \SI{256}{\giga\byte} memory) in about \num{18} days.
This slowness reveals the major disadvantage of this method, which comes along with the curse of dimensions.
However, the numerical integration may also offer the possibility to extend this method to more real-shaped brain geometries, which is not pursued within the scope of this paper. 

In Fig.~\ref{fig:splineMatrix}, the spline matrices achieved by the SVD-based method is visualized. The matrix is ill-conditioned with condition number \num{6.4053e+11} with respect to the $2$-norm. 
In addition, the element-wise relative derivation of the spline matrix achieved by the two approaches described above is plotted on a logarithmic scale. 
Even though there are several matrix entries with a higher point-wise deviation, the average of the relative deviation over all matrix elements is \SI{0.13}{\percent}. 
The higher point-wise deviation is related to entries closer to zero.
Hence, we conclude that the SVD-based method as well as the numerical analysis based method yield comparable results with respect to the accuracy. 

Recall that the data generated by the linear combination of two splines consists of the linear combination of the corresponding two rows of the spline matrix. 
For generating the data, we calculate the spline matrix by means of the numerical analysis method described before.
For the inversion of the data, we use the SVD-based method due to its immense speed-up to generate the spline matrix. 
The used setting is described in the next section.

For a better comparison between the results obtained from the scalar and the vector spline method, we use the same data set for both inversions. 
\begin{figure} 
\setlength{\figurewidth}{0.32\textwidth}\centering
\includegraphics[width=\figurewidth]{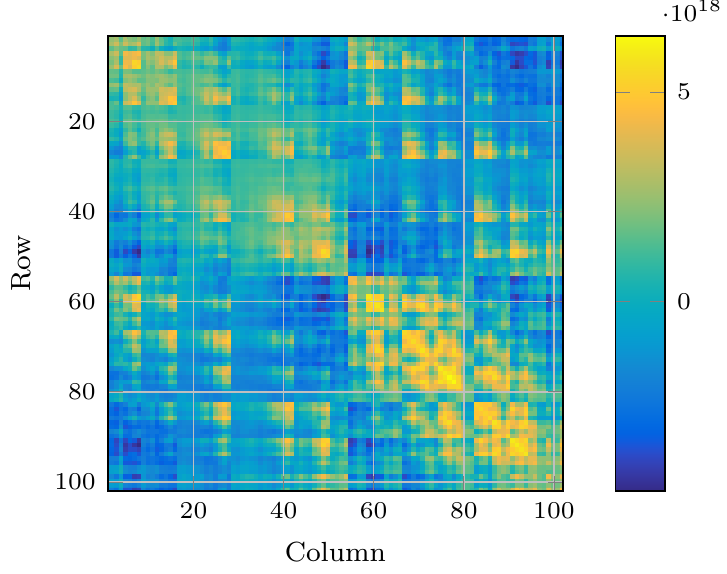} 
\includegraphics[width=1.07\figurewidth]{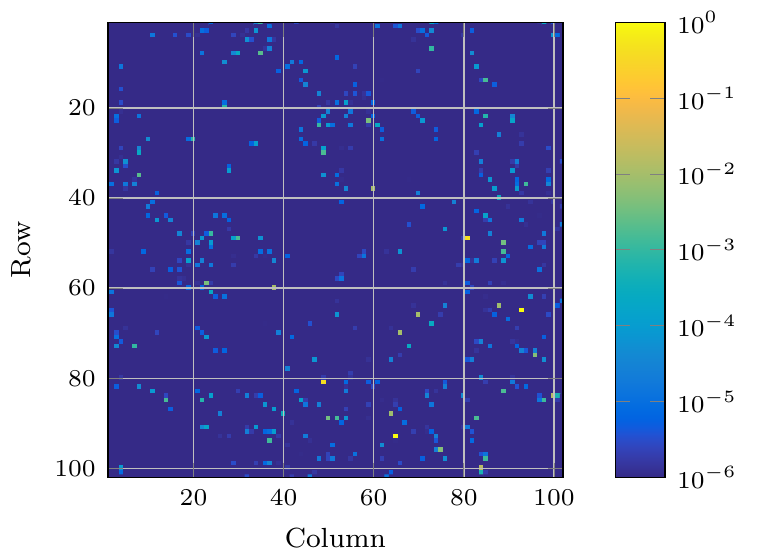}
\caption{Matrix of the scalar MEG splines with $h_{\text{data}}=0.8$ and sequence \eqref{eq:sequenceSynthDataSpline}, calculated by means of the SVD, 
\eqref{eq:scalarSplineMatrixEntry}, (left) and its logarithmic plot, entry-wise relative difference to the matrix obtained by the numerical method (right)} \label{fig:splineMatrix}
\end{figure}

\subsection{Spline Based Inversion of the MEG Data}
As stated before, a linear and continuous functional $\mathcal{A}^k_\mathrm{m}$ connecting the measured data $g_k$, $k=1,\dots,\ell_{\mathrm{M}}$, with the radial component $A^{(1)}$ from the vector field $a$ of the neuronal current Helmholtz decomposition \eqref{eq:Helmholtz} is known, \eqref{eq:MEGscalarFunc2Kern}.
In order to solve the problem
\begin{equation*}
  \mathcal{A}^k_\mathrm{m} A^{(1)} = g_k, \qquad k=1,\dots, \ell_{\mathrm{M}},
\end{equation*}
by means of the reproduced kernel based spline methods, we need to choose the symbols of the reproducing kernel. 
We set 
\begin{equation}\label{eq:symbolSpline}
    \kappa_n^{-2} \coloneqq \frac{h^{n}}{n}, \qquad n \in \{1,2,\dots,200\},\, h \in \{0.85, 0.9, 0.95, 0.99\}.
  \end{equation}

The RKHS spline can now be determined as in \eqref{eq:DefiScalarSpline}. 
A visualization of splines with only one and two non-vanishing coefficients $\alpha_k$ can be found in Fig.~\ref{fig:reproSpline}. 
Therein the localization of the spline in an area around its node can be seen. 
When the free parameter $h$ tends closer to $1$, the reproducing kernel becomes more localized and the amplitude rises.
\begin{figure}
  \setlength{\figurewidth}{0.33\textwidth}\centering
  \begin{subfigure}[t]{0.31\textwidth}
    \includegraphics[width=\figurewidth]{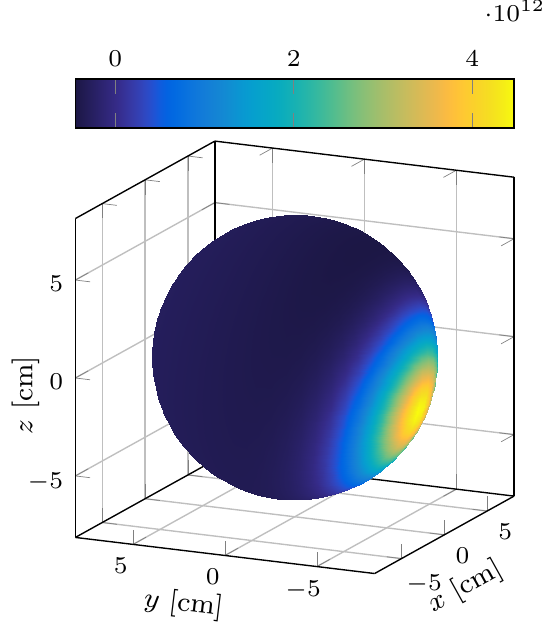}  %
    \subcaption{Scalar RKHS spline}
  \end{subfigure}\hfill
  \begin{subfigure}[t]{0.31\textwidth}
    \includegraphics[width=\figurewidth]{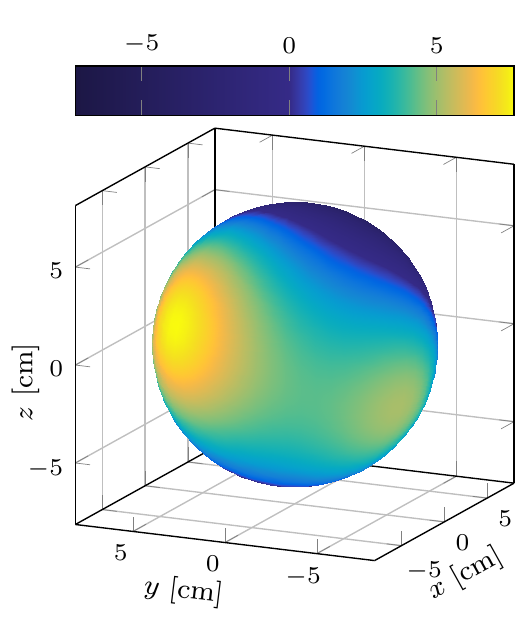}
    \subcaption{Exact solution of spline based scalar test case for $A^{(1)}$}\label{fig:exactSolutionA}
  \end{subfigure}\hfill
  \begin{subfigure}[t]{0.31\textwidth}
    \includegraphics[width=\figurewidth]{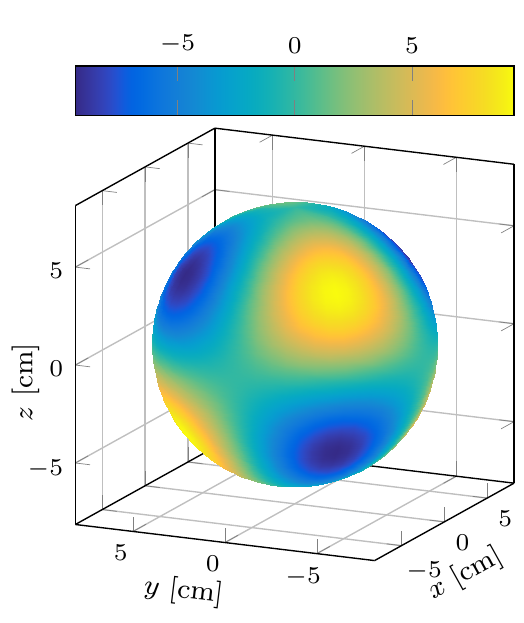}
    \subcaption{Exact solution of ONB based scalar test case for $A^{(1)}$}\label{fig:exactSolutionB}
  \end{subfigure}\hfill
  \caption{Radially symmetric RKHS spline generated by the sequence \eqref{eq:symbolSpline} with $h=0.95$ (left), 
  a linear combination of two splines with $h=0.8$ and sequence \eqref{eq:sequenceSynthDataSpline}, 
  which serves as the solution of $A^{(1)}$ in the spline based synthetic test case (middle), and the ONB based test case solution $A^{(1)}$ (right). 
  All plotted on a sphere inside the cerebrum with radius $0.99\varrho_0$.} \label{fig:reproSpline}
\end{figure}

For each data set, we solve the regularized linear equation system, \eqref{eq:TikReg},  for \num{500} different regularization parameters $\lambda$. 
The regularization parameters are logarithmically uniformly distributed and weighted with the absolute maximal value of the spline matrix. Hence, they are in the rage \numrange{1e-25}{1e-9}. 
In order to pick the best parameter, five parameter choice methods are applied afterwards, which are the automatic and the manual L-curve method (LCM), the discrepancy principle (DP), 
the quasi-optimality criterion (QOC), and the generalized cross validation (GCV), see \cite{Gutting2017,Bauer2011,Bauer2015} and the references therein.
For the resulting up to five `best' parameters, the normalized root mean square error (NRMSE) of the difference to the exact solution is calculated on the plotting grid. 
Then the parameter choice method resulting in the smallest NRMSE is chosen. 
The L-curve for the spline based synthetic test case with \SI{5}{\percent} noise is plotted in Fig.~\ref{fig:LCurve}.

\begin{figure}[h]
  \centering
  \includegraphics{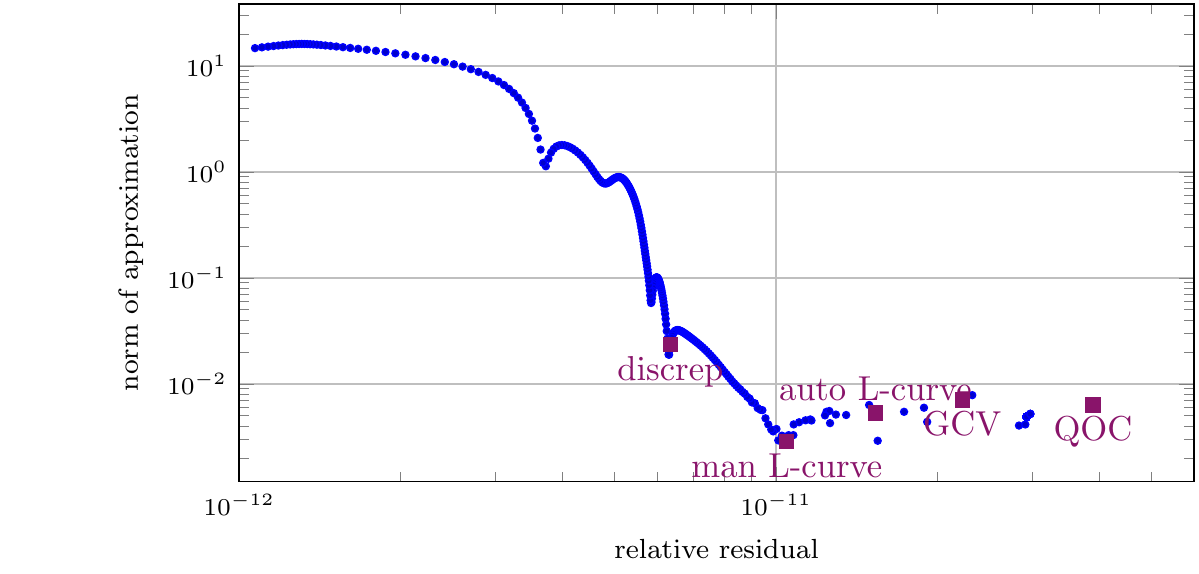}
  \caption{Double logarithmic plot of the relative residual against the norm of approximation depending on the regularization parameter. Regularization parameters preferred by the five parameter choice methods are highlighted.}
  \label{fig:LCurve}
\end{figure}

For each of the five different noise levels (\SI{0}{\percent},\,\SI{1}{\percent},\,\SI{2}{\percent},\,\SI{5}{\percent},\,\SI{10}{\percent}), 
we calculated the spline matrix with four different parameters $h$, see \eqref{eq:symbolSpline}. 
Among these \num{20} inversions, the best regularization parameter came only once from the GCV. 
In the other cases, the L-curve method provided the best regularization parameter. 
With low noise levels (up to \SI{2}{\percent}) the automatic L-curve method yields good results.
With higher noise levels, the a-priori range of the regularization parameters gets more important. 
If the range is chosen to be wide, the system of linear equations becomes easily over-regularized and the norm of the solution is so small (magnitude of \num{1e-12} or less) 
such that the automatic L-curve chooses regularization parameters leading to a relative data misfit of \num{1e2} or higher. 
This behavior can be avoided by restricting the parameter range a-priori or using the manual L-curve method.  

\begin{table}
\begin{tabular}{r r r r r r}\toprule
 noise level  & 0\%               & 1\%               & 2\%               & 5\%              & 10\% \\\midrule
$h$             & \num{0.8500}      & \num{0.8500}      & \num{0.9900}      &\num{0.8500}      & \num{0.8500}\\[1ex]
rel. residual & \num{4.2642e-08}& \num{0.0081}    & \num{0.0169}    &\num{0.0455}    & \num{0.0834}\\
rel. NRMSE  $A^{(1)}$  & \num{5.3310e-04} & \num{0.0194}    & \num{0.0328}    &\num{0.0472}    & \num{0.0599} \\[1ex]
rel. NRMSE $J$   & \num{5.3411e-04} & \num{0.0192}    & \num{0.0371}    &\num{0.0474}    & \num{0.0591} \\[1ex]
$\lambda$     & \num{4.4054e-17}& \num{1.3395e-11}& \num{4.2240e-11}&\num{5.2497e-11}& \num{5.7572e-11}\\
pcm           & LCM man        & LCM auto        & LCM auto        & LCM man        & LCM man\\\midrule
$h$             & \num{0.3771}      & \num{0.3771}      & \num{0.3771}      &\num{0.3771}      & \num{0.3771}\\[1ex]
rel. residual & \num{4.5957e-04}& \num{0.0080}    & \num{0.0166}    &\num{0.0451}    & \num{0.0874}\\
rel. NRMSE $J$   & \num{0.0051} & \num{0.0140}    & \num{0.0240}    &\num{0.0416}    & \num{0.0455} \\[1ex]
$\lambda$     & \num{3.6990e-15}& \num{1.4782e-15}& \num{1.4782e-15} &\num{2.5292e-14}& \num{1.4782e-14}\\
pcm           & LCM auto        & LCM auto        & LCM auto        & LCM auto        & LCM man\\\bottomrule
\end{tabular}
\caption{Numerical results for the MEG synthetic spline based test case achieved by the scalar spline method (top) and the vector spline method (bottom)}
\label{tab:scalarMEGResults}
\end{table}

Numerical results for the synthetic MEG data inversion such as the relative residual as well as the relative NRMSE with respect to different noise levels are listed in Tab.~\ref{tab:scalarMEGResults}.
For each noise level, we selected among the four values of $h$, the one resulting in the best numerical result with respect to the relative NRMSE. 
We can also see, that $h=0.85$ is chosen most frequently. 
A possible explanation is that the corresponding width of the spline location fits the best to the distances of the measurement positions and is the closest to the synthetic test case value $h_{\text{data}}$.
Besides the chosen $h$, the relative data misfit or residual, respectively, is stated in Tab.~\ref{tab:scalarMEGResults}. 
Even with increasing noise level, the relative residual is often more than slightly below the noise level. 

Taking the relative NRMSE and visualizations of the reconstructions of $A^{(1)}$ into account, Fig.~\ref{fig:scalarMEG02}, one can see that the approximations are not over-fitted.
In contrast, the active regions are covered well by the reconstruction.
In the case of lower noise levels, the amplitudes of the current are matching.
Only in the case of higher noise levels they become a little bit blurred and faded, which goes back to the required stronger regularization.
Since deviations to the exact solution are hard to see for lower noise levels, we have a closer look at them in Fig.~\ref{fig:scalarMEG510}.
Near the active regions, the deviations are the largest, which was anticipated.
In addition, larger differences can be seen in the lower part of the ball. 
This behavior is directly related to the irregular distribution of measurement points, see Fig.~\ref{fig:sensorPosition}, since there is only few information of the lower part of the ball and in the facial area. 
The artifactual pattern in the deviation plots leads back to the structure of the splines.  
All in all, the reconstructions of $A^{(1)}$ yield good and stable results with respect to increasing noise level. 

\begin{figure}
  \setlength{\figurewidth}{0.33\textwidth}\centering  
  \begin{subfigure}[t]{0.31\textwidth}
\includegraphics[width=\figurewidth]{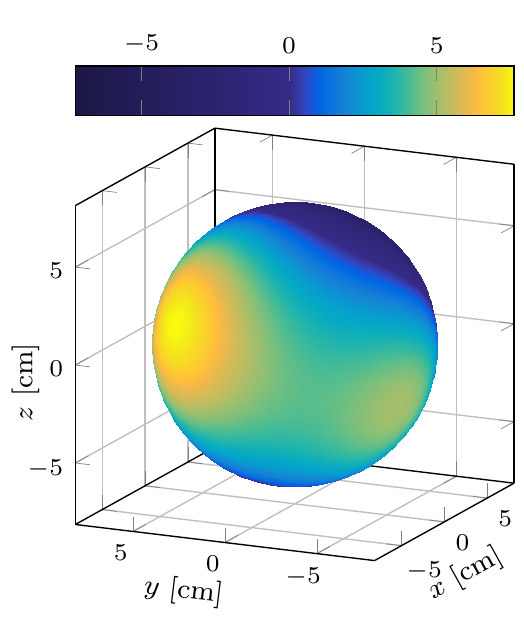}  %
    \subcaption{\SI{0}{\percent} noise on the data}
  \end{subfigure}\hfill
  \begin{subfigure}[t]{0.31\textwidth}
\includegraphics[width=\figurewidth]{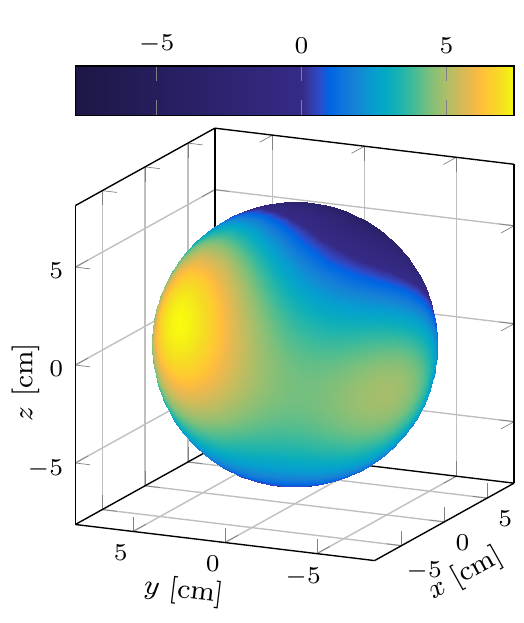} 
    \subcaption{\SI{5}{\percent} noise on the data}
  \end{subfigure}\hfill
  \begin{subfigure}[t]{0.31\textwidth}
\includegraphics[width=\figurewidth]{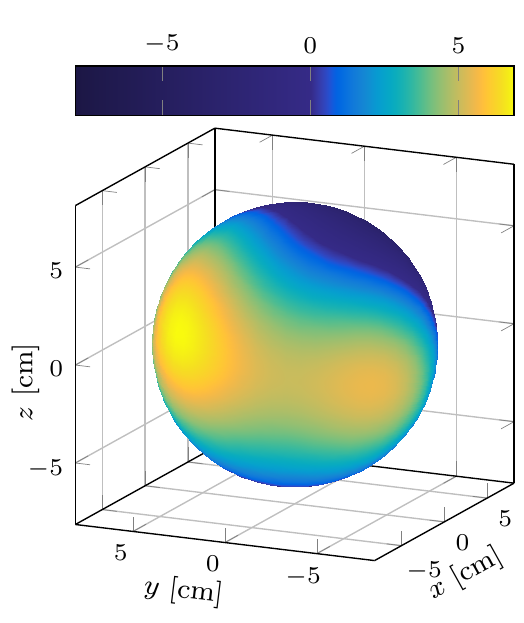} 
    \subcaption{\SI{10}{\percent} noise on the data}
  \end{subfigure}
\caption{Reconstruction of the neuronal current component $A^{(1)}$ in the synthetic spline based scalar MEG test case (\autoref{fig:exactSolutionA}) from \SI{0}{\percent} (left), \SI{5}{\percent} (middle), and \SI{10}{\percent} (right) noisy data} \label{fig:scalarMEG02} 
\end{figure}

\begin{figure}  
\setlength{\figurewidth}{0.33\textwidth}\centering  
  \begin{subfigure}[t]{0.31\textwidth}
\includegraphics[width=\figurewidth]{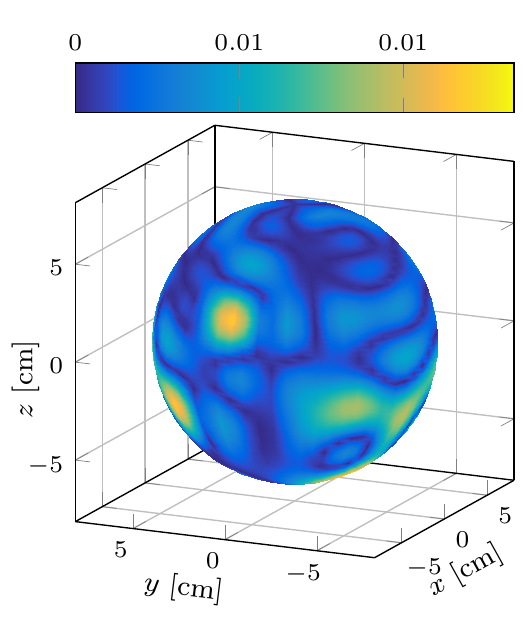} %
    \subcaption{\SI{0}{\percent} noise on the data}
  \end{subfigure}\hfill
  \begin{subfigure}[t]{0.31\textwidth}
\includegraphics[width=\figurewidth]{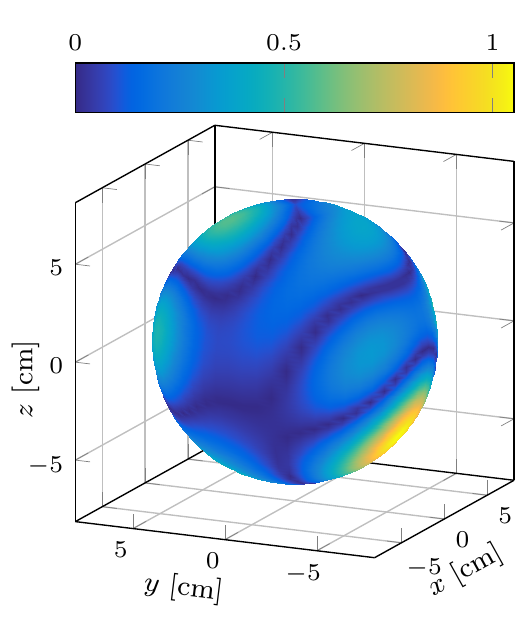}
    \subcaption{\SI{5}{\percent} noise on the data}
  \end{subfigure}\hfill
  \begin{subfigure}[t]{0.31\textwidth}
\includegraphics[width=\figurewidth]{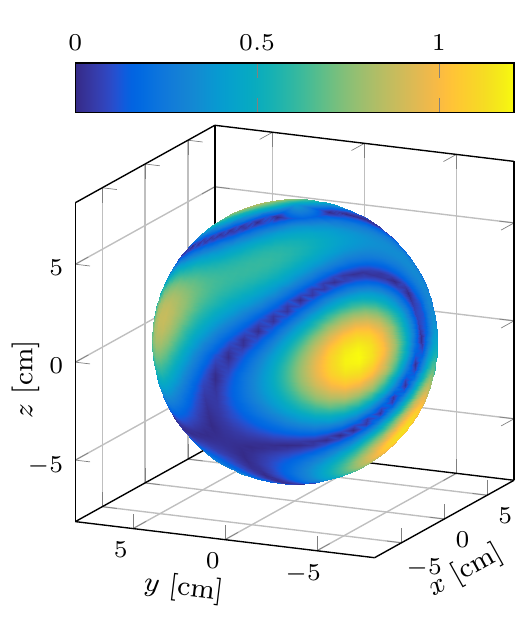}
    \subcaption{\SI{10}{\percent} noise on the data}
  \end{subfigure}
\caption{Absolute deviation of the neuronal current component $A^{(1)}$ in the synthetic spline based scalar MEG test case from \SI{0}{\percent} (left), \SI{5}{\percent} (middle), \SI{10}{\percent} (right) noisy data}\label{fig:scalarMEG510} 
\end{figure}

Before we discuss the corresponding vector-valued reconstruction, we verify the accuracy and stability of the numerical method by means of the inversion of synthetic data which is not generated by a spline method at all.  
For this purpose, we use the exact solution $A^{(1)}(r\xi) = r\xi \mapsto 0.1 G_3(r)Y_{3,6}(\xi)$, which we refer to as the ONB based test case solution and is plotted in Fig.~\ref{fig:exactSolutionB}.
The angular part of the function is given by the a spherical harmonic of degree \num{3} and order \num{6}, whereas the formula for the radial part is stated in \eqref{eq:GnBasis}.  
For this particular test case, we have also chosen $h=0.85$, since it yields the best results throughout our numerical tests. 
Since the non-noisy and low-noisy reconstructions visually coincide with the exact solution, we have only plotted the deviations in Fig.~\ref{fig:onbscalarMEG0210}.
Recall that the absolute maximum values of the exact solutions  are approximately \num{9.5}. 

The approximation recovers the active regions and the amplitude well, even with \SI{10}{\percent} noise on the data, the reconstruction does not seem to be blurry or fade out. 
This is also reflected by the relative residual which is significantly smaller than the noise level despite resulting in an satisfying small relative NRMSE. 
The particular values are stated in the captions of Fig.~\ref{fig:onbscalarMEG0210}.
Besides, there are artifacts based on the spline structure which can be seen in the deviation plots.
Unattached from the noise level, the areas most difficult to reconstruct are located in the data gap area.
This is especially revealed in the non-noisy case taking into account that the exact solution is rotationally symmetric. 
Besides these small artifacts in the data gap area, the used spline method can handle the irregularly distributed data situation very well. 

\begin{figure}
\setlength{\figurewidth}{0.33\textwidth}\centering
  \begin{subfigure}[c]{0.31\textwidth}
    \includegraphics[width=\figurewidth]{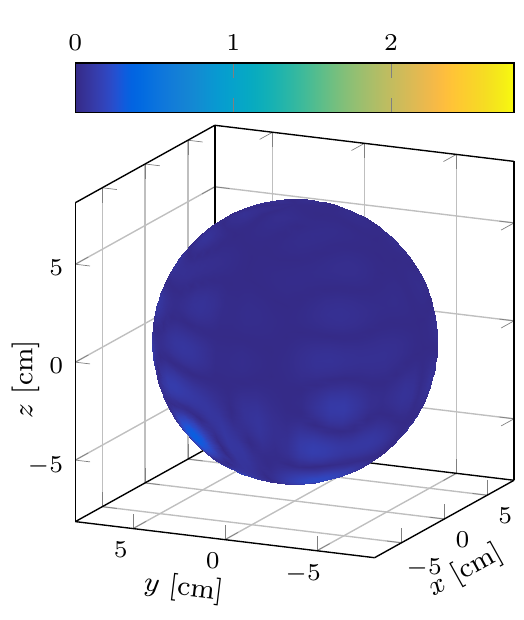}
  \subcaption{noise level: \SI{0}{\percent}\\
  rel. residual: \num{1.0092e-06}\\ 
  rel. NRMSE: \num{0.0076}}
  \end{subfigure}\hfill
  \begin{subfigure}[c]{0.31\textwidth}
    \includegraphics[width=\figurewidth]{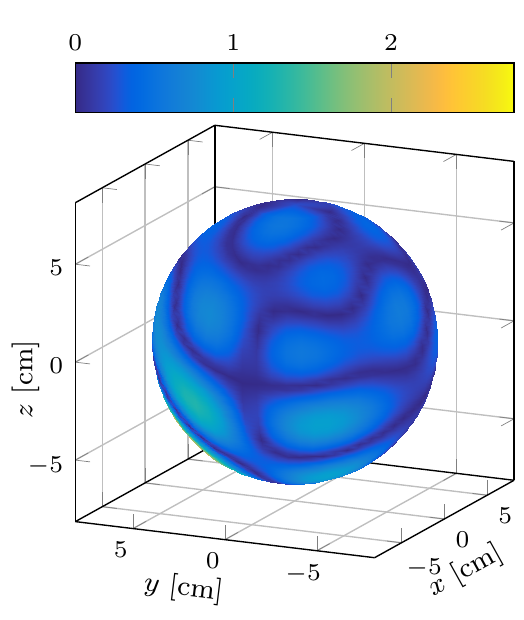}
  \subcaption{noise level: \SI{5}{\percent}\\
  rel. residual: \num{0.0313}\\ 
  rel. NRMSE: \num{0.0623}}
  \end{subfigure}\hfill
  \begin{subfigure}[c]{0.31\textwidth}
   \includegraphics[width=\figurewidth]{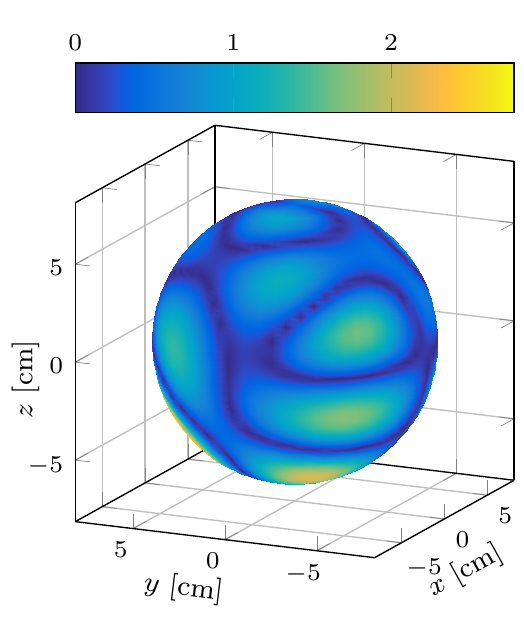} 
\subcaption{noise level: \SI{10}{\percent}\\
rel. residual: \num{0.0619}\\ 
rel. NRMSE: \num{0.0891}}
\end{subfigure}\hfill
\caption{Absolute deviation of the reconstruction of the neuronal current component $A^{(1)}$ in the ONB based synthetic scalar MEG test case to the exact solution $r\xi \mapsto 0.1 G_3(r)Y_{3,6}(\xi)$ depending on the noise levels.
For a better comparability, the colorbars were chosen equally.} \label{fig:onbscalarMEG0210}
\end{figure}

However, we are mainly interested in the approximation and visualization of the neuronal current, hence in the vector-valued quantity. 
In order to achieve this, we first transfer the scalar spline solution to the neuronal current by means of \eqref{eq:currentFromScalar}.
The exact solutions of the current $J$ in the spline based as well as the ONB based test case are plotted in Fig.~\ref{fig:vectorExactSolution}.
Besides the absolute values of the currents, which are visualized via surface plots, the direction of the currents are depicted via arrows. 
\begin{figure}
\setlength{\figurewidth}{0.35\textwidth}\centering
\includegraphics[width=\figurewidth]{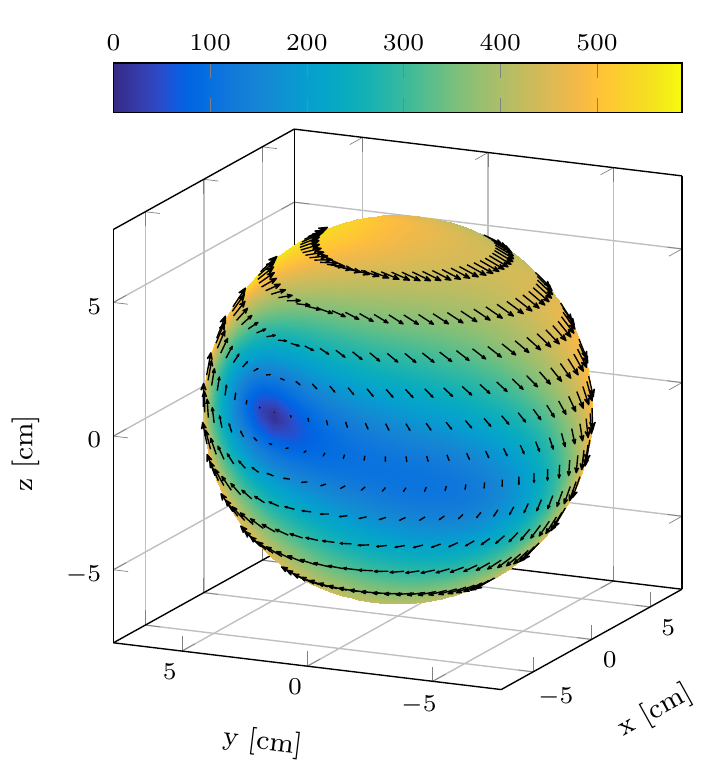}
\includegraphics[width=\figurewidth]{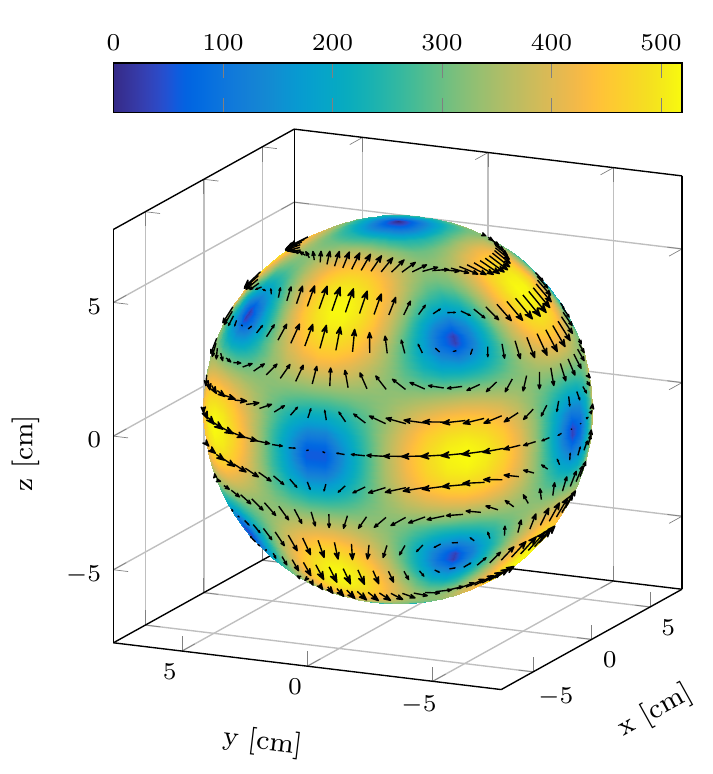}
\caption{Exact solution of current $J$ for the spline based synthetic test case (left) and the ONB based test case (right)}\label{fig:vectorExactSolution}
\end{figure}

We start with a look at the ONB based synthetic test case. 
In the non-noisy case, the quality of the vector-valued current reconstruction is totally comparable to the quality of the scalar one, Fig.~\ref{fig:currentonb}. 
Also in the low-noisy cases we observe that the main deviations to the exact solution are located in the data gap area and the remaining approximation fits the exact solution very well with respect of localization and amplitude. 
In the case of \SI{10}{\percent} noise on the data additional reconstruction errors occur besides the data gap and the reconstruction appears to be muted.
This is also manifested in the values of the relative NRMSE. 
Compared to the situation of the scalar approximation, see Fig.~\ref{fig:onbscalarMEG0210}, the relative NRMSE increases due to the transformation.
\begin{figure}\hspace*{-0.17\textwidth}
\setlength{\figurewidth}{0.55\textwidth}\centering
\begin{subfigure}[c]{0.31\textwidth}
  \includegraphics[width=\figurewidth]{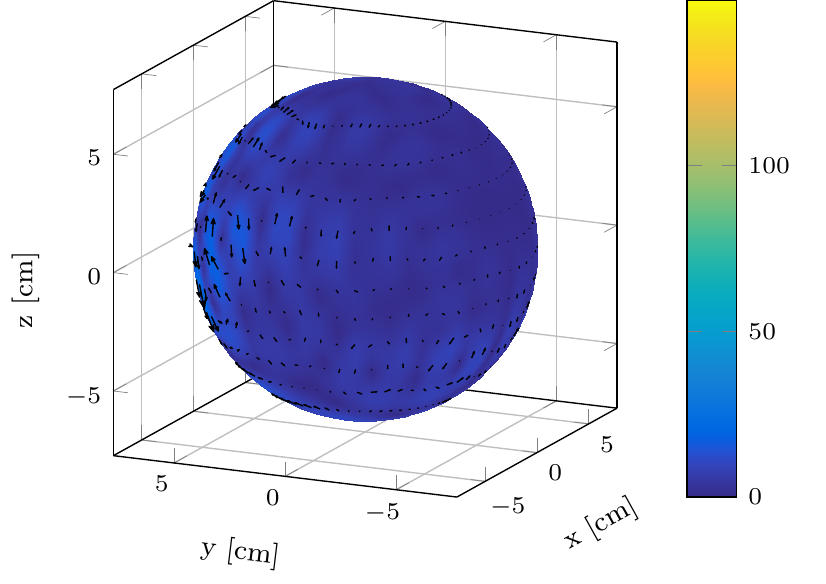} 
\subcaption{Deviation from non-noisy data \\ 
rel. NRMSE: \num{0.0154}}
\end{subfigure}\hspace*{0.17\textwidth}
\begin{subfigure}[c]{0.31\textwidth}
  \includegraphics[width=\figurewidth]{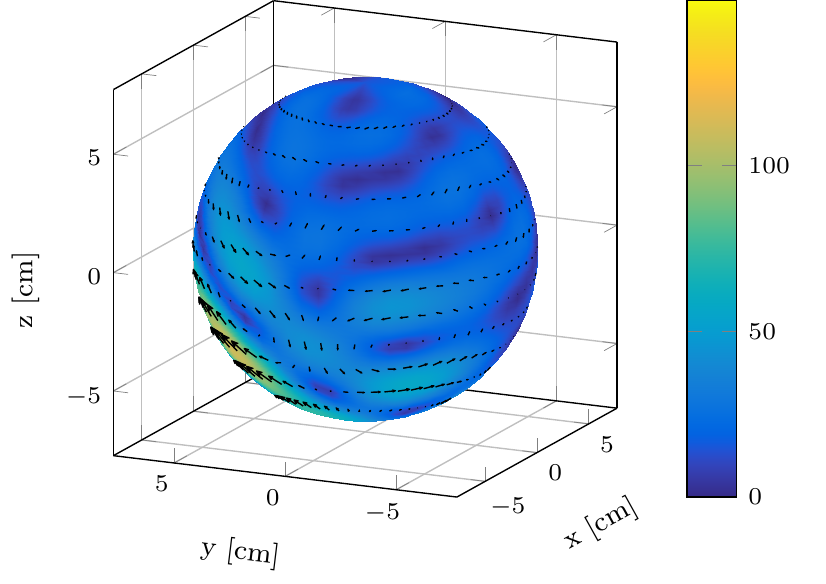}
\subcaption{Deviation from \SI{5}{\percent} noisy data\\ rel. NRMSE: \num{0.0787}}
\end{subfigure}

\hspace*{-0.17\textwidth}
\begin{subfigure}[t]{0.31\textwidth}
  \includegraphics[width=\figurewidth]{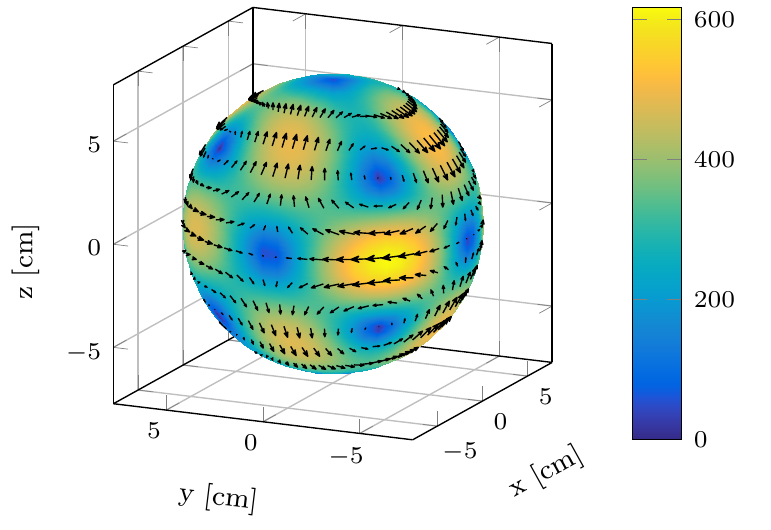}
\subcaption{Reconstruction from \SI{10}{\percent} noise}
\end{subfigure}\hspace*{0.17\textwidth}
\begin{subfigure}[t]{0.31\textwidth}
  \includegraphics[width=\figurewidth]{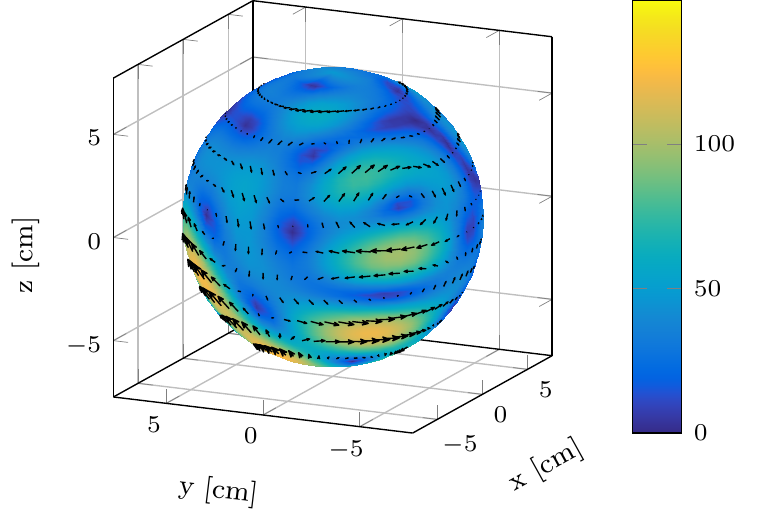} 
\subcaption{Deviation from \SI{10}{\percent} noisy data\\ rel. NRMSE: \num{0.1033}}
\end{subfigure}
\caption{Numerical results for the ONB based synthetic test case achieved from the transformation of the scalar reconstruction from Fig.~\ref{fig:onbscalarMEG0210}.
For a better comparability, the colorbars of the deviation plots were chosen equally.}\label{fig:currentonb}
\end{figure}

The relative NRMSE of the transferred vector-valued approximation of $J$ for the spline based synthetic test case is also stated in Tab.~\ref{tab:scalarMEGResults} (top). 
In this particular numerical experiment the relative NRMSE of the scalar reconstruction and the transferred one are quite similar. 
However, they are outperformed by the relative NRMSE of the direct vector-valued reconstruction of $J$ via the vector spline method, whereas the relative residual remains significantly lower than the noise level.
This gain of quality in the reconstruction can also be seen in the deviation plots, Fig.~\ref{fig:vectorMEGspline}.
In the first row, the inversion of data with noise level \SI{2}{\percent} is performed and with \SI{10}{\percent} noise on the data in the second row. 
In the left column the differences of the transferred scalar reconstruction to the exact solution are plotted. 
In comparison to the reconstructions of the direct vector spline method plotted in the middle column, we see significantly higher deviations spread over the entire ball.
In the case of the vector spline reconstruction from \SI{2}{\percent}, the highest deviations by far are located in the data gap area.
In the right column, the vector spline reconstructions are plotted, which are stable with increasing noise level.
Due to the construction of the vector reproducing kernels, the vector splines are significantly more localized than the scalar ones for higher parameter $h$. 
Based on our numerical investigation, the adaption $h = 0.85^6$ for the free parameter yielded reasonable results in order to avoid an overfitting of the data.  

\begin{figure}
\setlength{\figurewidth}{0.33\textwidth}\centering
\begin{subfigure}[t]{0.31\textwidth}
  \includegraphics[width=\figurewidth]{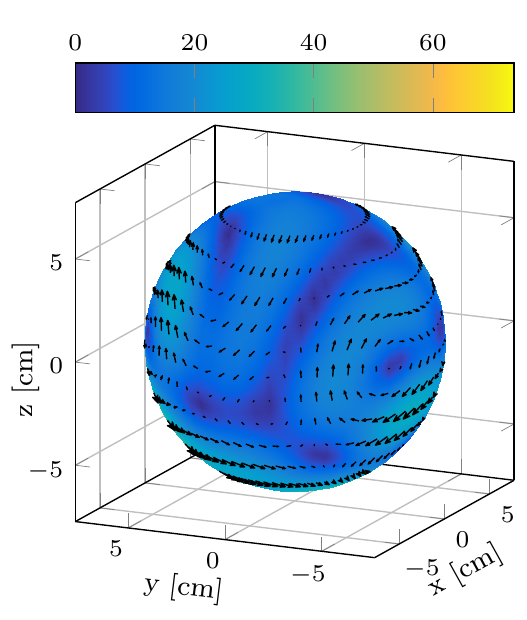} 
  \subcaption{Difference of converted scalar spline reconstruction (\SI{2}{\percent} noise)}\label{fig:vecA}
\end{subfigure}
\begin{subfigure}[t]{0.31\textwidth}
  \includegraphics[width=\figurewidth]{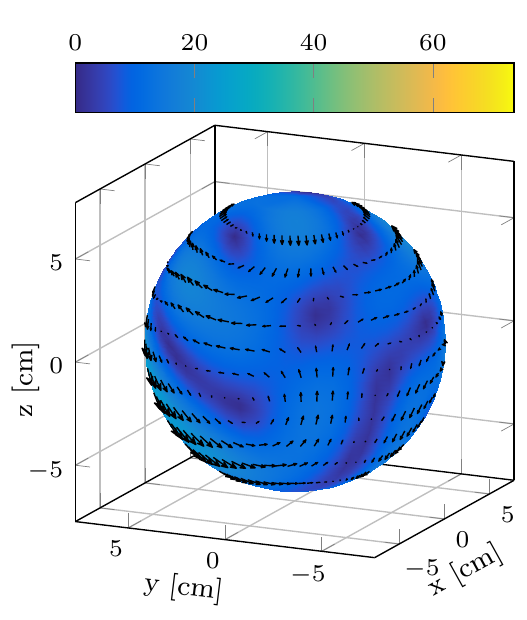}
  \subcaption{Difference of vector spline reconstruction (\SI{2}{\percent} noise)}\label{fig:vecB}
\end{subfigure}
\begin{subfigure}[t]{0.31\textwidth}
  \includegraphics[width=\figurewidth]{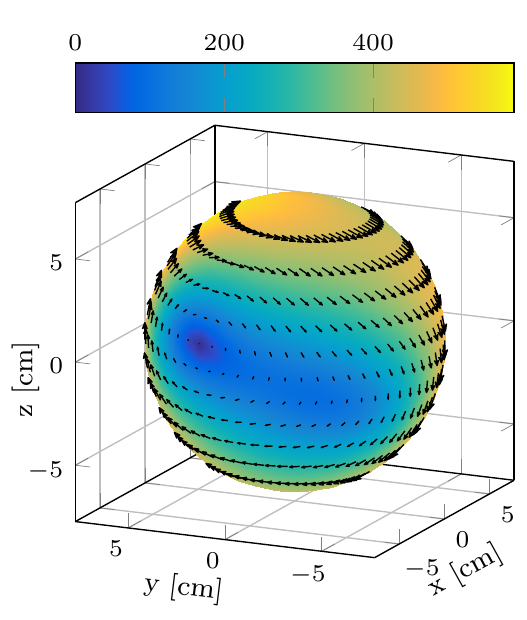}
  \subcaption{Reconstruction via vector splines (\SI{2}{\percent} noise)}
\end{subfigure}
\begin{subfigure}[t]{0.31\textwidth}
  \includegraphics[width=\figurewidth]{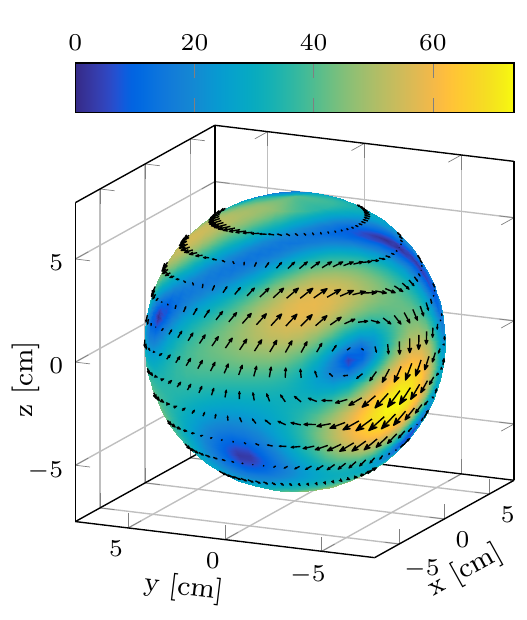} 
  \subcaption{Difference of converted scalar spline reconstruction (\SI{10}{\percent} noise)}\label{fig:vecD}
\end{subfigure}
\begin{subfigure}[t]{0.31\textwidth}
  \includegraphics[width=\figurewidth]{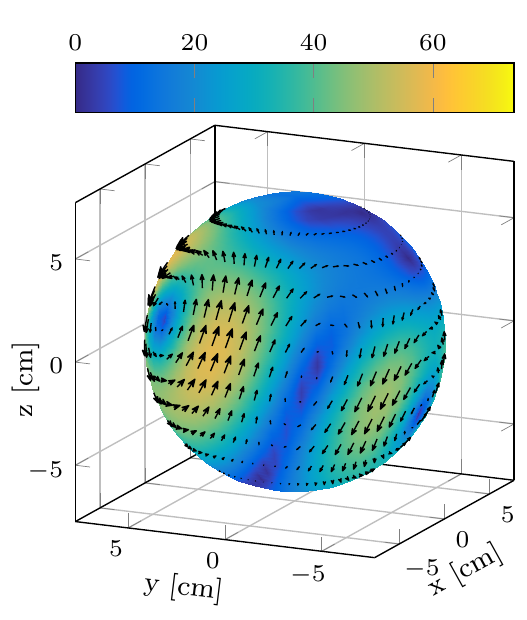}
  \subcaption{Difference of vector spline reconstruction (\SI{10}{\percent} noise)}
\end{subfigure}
\begin{subfigure}[t]{0.31\textwidth}
  \includegraphics[width=\figurewidth]{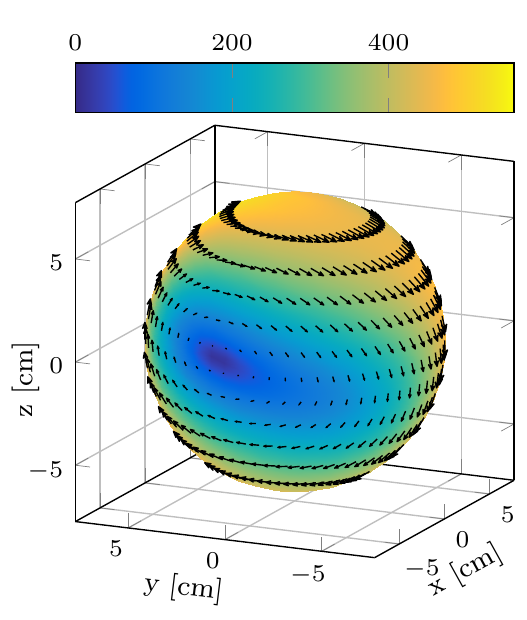}
  \subcaption{Reconstruction via vector splines (\SI{10}{\percent} noise)}
\end{subfigure}
\caption{Deviation of the exact current to the converted reconstruction of $J$ belonging to the scalar spline reconstruction of $A^{(1)}$ plotted in Fig.~\ref{fig:exactSolutionA} (left column), 
to the direct reconstruction by means of the vector spline method (middle column) and the reconstruction achieved by the vector splines (right column) depending on $2\%$ noise (top row) and $10\%$ noise (bottom row). For a better comparability, the colorbars in the left and middle column were chosen equally.}\label{fig:vectorMEGspline}
\end{figure}

We tested the vector spline method not only for the inverse MEG problem but also for the inverse EEG problem. 
In this setting, we also generated synthetic data by using a linear combination of two splines with $h_{\text{data}} = 0.64$.
Having the inverse crime in mind, we used the sequence $\kappa_{n,\text{data}}^{-2} \coloneqq n h^n$ for generating the data and $\kappa_n^{-2} \coloneqq h^n$ with $h=0.85$ for the inversion. 
As an additional obstacle, we located the test splines in the junction from the data gap area to the area covered by sensors. 
The exact solution is plotted in Fig.~\ref{fig:ExactEEG}.

\begin{table}
\begin{tabular}{r r r r r r}\toprule
 noise level  & 0\%             & 1\%             & 2\%             & 5\%            & 10\% \\\midrule
rel. residual & \num{3.0514e-09}& \num{0.0018}    & \num{0.0268}    &\num{0.0415}    & \num{0.0635}\\
rel. NRMSE $J$   & \num{0.0133} & \num{0.0271}    & \num{0.0427}    &\num{0.0512}    & \num{0.0869} \\[1ex]
$\lambda$     & \num{0.0233}& \num{23.2964}& \num{232.9644}&\num{232.9644}& \num{186.3716}\\\bottomrule
\end{tabular}
\caption{Numerical results for the EEG synthetic test case achieved by the vector spline method}\label{tab:ResultsEEG}
\end{table}

Even though the synthetic setting is more difficult to solve as in the inverse MEG test case, the results are comparably satisfying with respect to accuracy and stability during increasing noise level, Tab.~\ref{tab:ResultsEEG}.
Especially within small noise levels, one can see in the deviation plots Fig.~\ref{fig:vectorEEGspline} (B) to (E) that the reconstruction on the sensor covered hemisphere is more accurate than in the data gap area, which was expected. 
For \SI{10}{\percent} noise on the data this effect is smoothed away, since a higher regularization was necessary to handle the noise. 
However, even for \SI{10}{\percent} the reconstruction covers the active regions very satisfyingly and is not over-fitting the data, see Fig.~\ref{fig:ReconstEEG10} .

\begin{figure}
\setlength{\figurewidth}{0.33\textwidth}\centering
\begin{subfigure}[t]{0.31\textwidth}
  \includegraphics[width=\figurewidth]{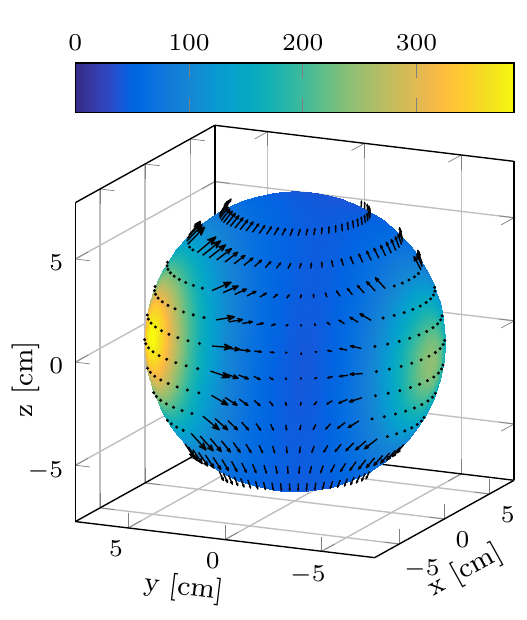}
  \subcaption{Exact spline based test current}\label{fig:ExactEEG}
\end{subfigure}
\begin{subfigure}[t]{0.31\textwidth}
  \includegraphics[width=\figurewidth]{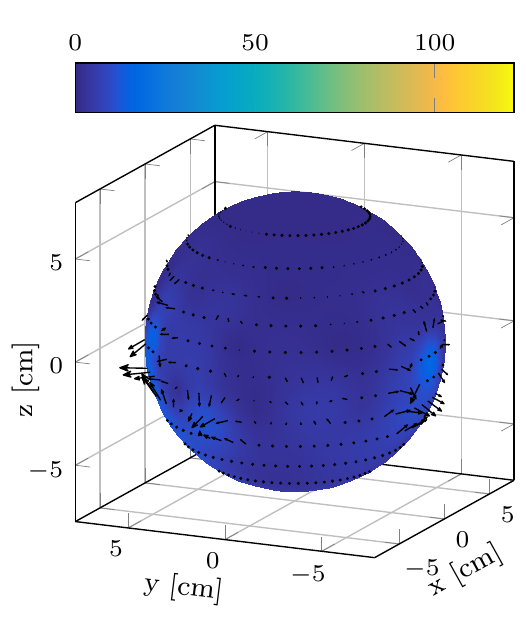}
  \subcaption{Deviation (\SI{0}{\percent} noise)}
\end{subfigure}
\begin{subfigure}[t]{0.31\textwidth}
  \includegraphics[width=\figurewidth]{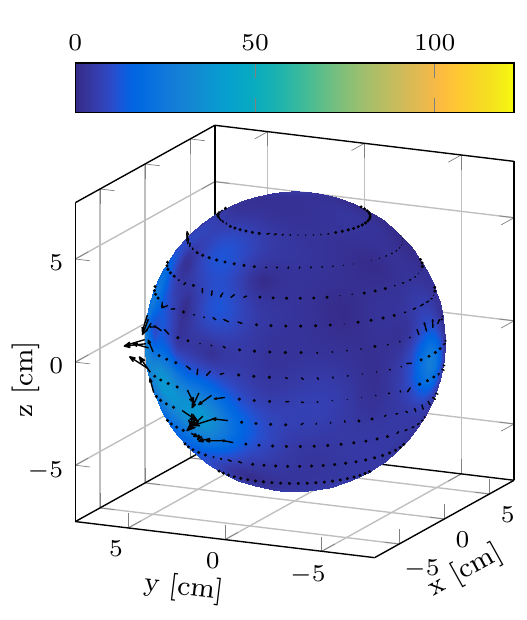}
  \subcaption{Deviation (\SI{1}{\percent} noise)}
\end{subfigure}
\begin{subfigure}[t]{0.31\textwidth}
  \includegraphics[width=\figurewidth]{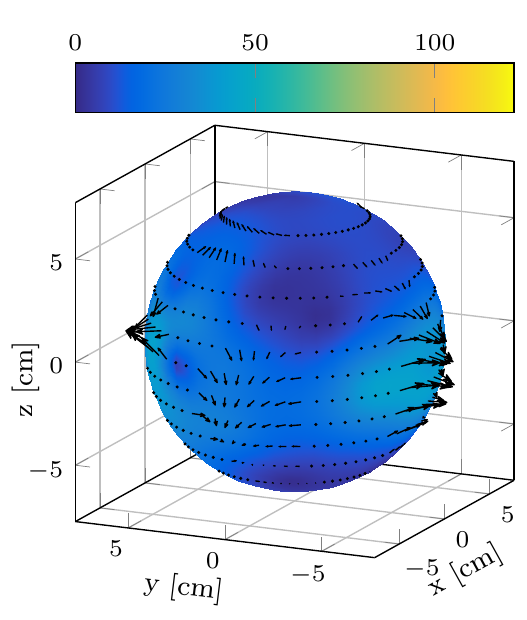}
  \subcaption{Deviation (\SI{5}{\percent} noise)}
\end{subfigure}
\begin{subfigure}[t]{0.31\textwidth}
  \includegraphics[width=\figurewidth]{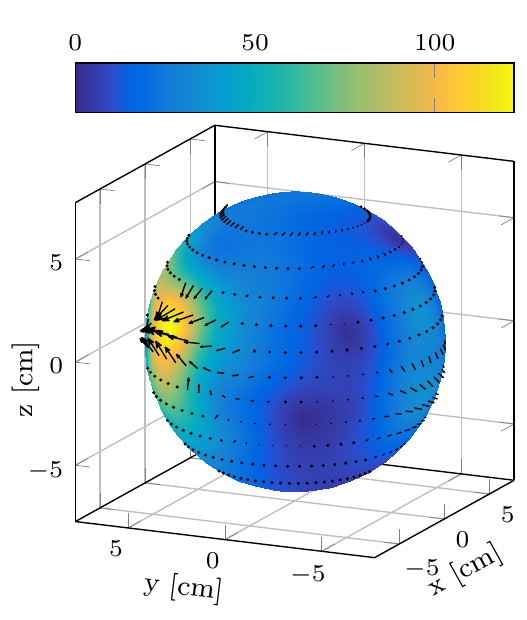}
  \subcaption{Deviation (\SI{10}{\percent} noise)}
\end{subfigure}
\begin{subfigure}[t]{0.31\textwidth}
  \includegraphics[width=\figurewidth]{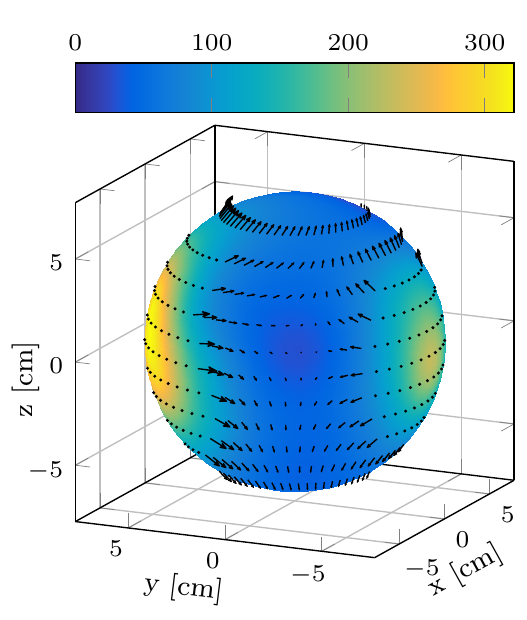}
  \subcaption{Reconstruction  (\SI{10}{\percent} noise)}\label{fig:ReconstEEG10}
\end{subfigure}
\caption{Absolute deviations and reconstruction of neuronal current $J$ from synthetic spline based EEG data with \SI{0}{\percent}, \SI{5}{\percent}, and \SI{10}{\percent} noise by using the vector spline method}\label{fig:vectorEEGspline}
\end{figure}

Within our synthetic test cases for the inverse MEG and EEG problem, we have seen that
\begin{itemize}
   \item the scalar spline method as well as the vector spline method is a  stable method for the solution of ill-posed inverse problems;
   \item the spline methods are fast: the average of \num{700} runs of the vector spline method including parameter choice methods and plotting for the inverse MEG problem yield \SI{0.5654(261)}{\second} and \SI{0.4285(69)}{\second} for the scalar spline method.
   \item both methods are able to handle irregularly distributed and noisy data;
   \item both methods keep the relative residual significantly below the noise level without over-fitting the reconstruction;
   \item the reconstructions coincides well with the exact solution even in the presence of higher noise levels;
   \item most deviations are located in the data gap area which was expected due to the lack of information in this area;
   \item the active regions are covered well and the amplitudes are reproduced accurately in the non-noisy and low-noisy case (up to \SI{5}{\percent}) and very satisfactorily in the case of \SI{10}{\percent} noise on the data;
   \item during the transition from the scalar reconstruction to the vector-valued one, reconstruction errors can be propagated.
 \end{itemize} 
Summarizing, we can say that even though the scalar spline method yields very good results during the synthetic test case, the vector method should be preferred if one is interested in a reconstruction of the vector-valued current.
Due to the vector spline method, quality of the reconstruction can be gained which manifests in the relative NRMSE as well in the deviation plots. 
Especially with increasing noise level, the vector spline method is more robust than the scalar spline method for solving the vector-valued inverse problem. 

Due to very good numerical results throughout our synthetic test cases, we test the numerical method with real data. 
To the knowledge of the author, it is the first time that a reproducing kernel based spline method is applied to real magneto-electroencephalography data, even though it yielded promising results for the scalar spline approach in \cite{Fokas2012}.

In order to compare the reconstruction of real data with former approaches, we use the same set of real data as in \cite{Leweke2018}. 
Therein, the regularized orthogonal functional matching pursuit (ROFMP) algorithm was used for the inversion and yielded good and reasonable results. 
The final results are also shown in the right columns of Fig.~\ref{fig:realEEG}  and Fig.~\ref{fig:realMEG} for the sake of comparison. 
For generating the data, a human participant wearing an EEG sensor cap was placed into an Elekta Neuromag\textsuperscript{\textregistered}, \cite{Elekta2005}, MEG device at the MRC Cognition and Brain Sciences Unit, Cambridge, UK.
During the measurements, a visual stimulus in form of a checkered pattern is presented to the participant in the right visual hemi-field.
After a delay of approximately \SI{80}{\milli\second}, the brain activity increases, presumably originating from visual areas in the back of the brain. 
For the inversion of both data types, we chose the measurement corresponding to \SI{89}{\milli\second}.
For the visualization of the reconstruction, we plot the neuronal current onto a sphere inside the cerebrum and show two views of the sphere, a front view and a back view.
Since the optical nerve fibers associated to the nasal side of the retinas cross each other in the optic chiasm, the brain activity should be maximal at the contralateral visual cortex, see \cite{Kandel2013}.
More precisely, we expect the main activity in the left visual cortex (i.e. contralateral to the side of visual stimulation) which is located at the back of the brain. 
More precisely, the brain activity should be bipolar in the case of the surface EEG recordings, that is a positive brain activity at the back results and a smaller negative brain activity at the front.

In the real data situation, no exact solution is known. 
Hence, we cannot use the NRMSE as a parameter choice method, such that the approximation values substitute here the deviations values.
As also seen during the synthetic test cases, the automatic L-curve method was chosen by our criterion for determining the regularization parameter. 

In the case of the reconstruction from the EEG measurements, the approximated neuronal current serves all the expectations, Fig.~\ref{fig:realEEG}. 
The order of magnitude of the activity is around \SI{4.5}{\nano\ampere\metre^{-2}}, which is quite realistic. 
In addition, the reconstructed activity is mainly located in the area of the left visual cortex with a smaller activity in the right visual cortex, Fig.~\ref{fig:EEGback}. 
The current is pointing inside the cerebrum.
Besides the reconstruction, we plotted a scatter plot of the absolute values of the measurements. 

\begin{figure}
\setlength{\figurewidth}{0.33\textwidth}\centering
\begin{subfigure}[t]{0.33\textwidth}
  \includegraphics[width=\figurewidth]{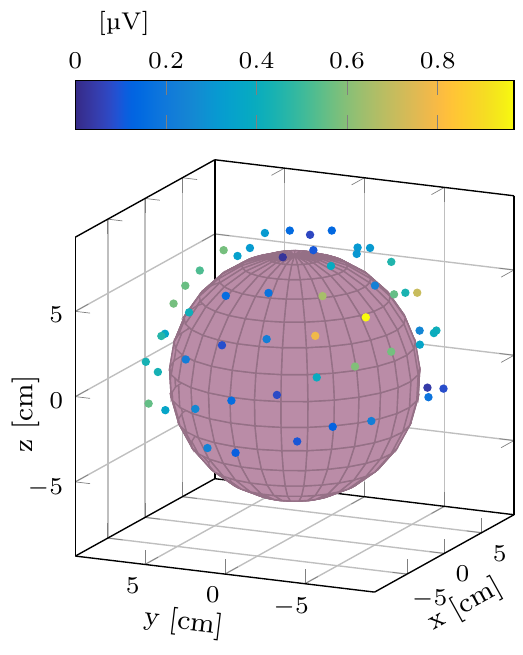}
  \subcaption{Back view of electric potential measurements absolute values}\label{fig:EEGscatterback}
\end{subfigure}
\begin{subfigure}[t]{0.31\textwidth}
  \includegraphics[width=\figurewidth]{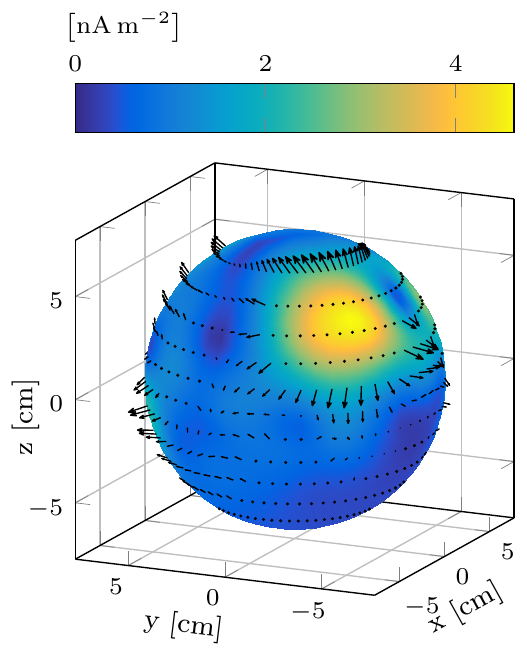}
  \subcaption{Back view of vector spline reconstruction}\label{fig:EEGback}
\end{subfigure}\hspace*{0.02\textwidth}
\begin{subfigure}[t]{0.31\textwidth}
  \includegraphics[width=\figurewidth]{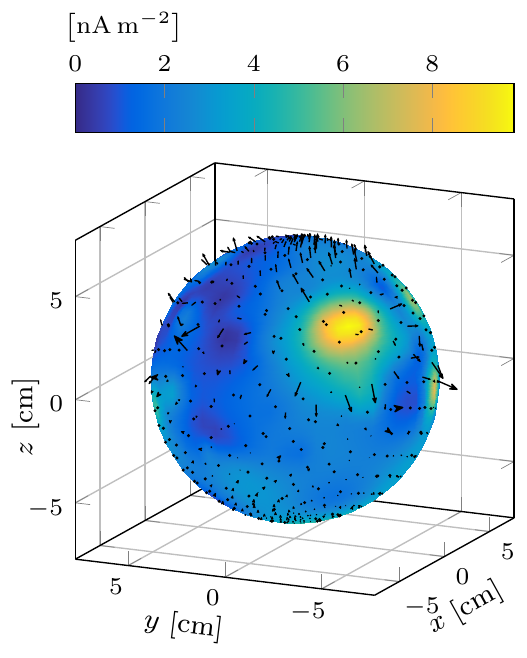}
  \subcaption{Back view of ROFMP reconstruction}\label{fig:EEGbackRFMP}
\end{subfigure}
\begin{subfigure}[t]{0.33\textwidth}
  \includegraphics[width=\figurewidth]{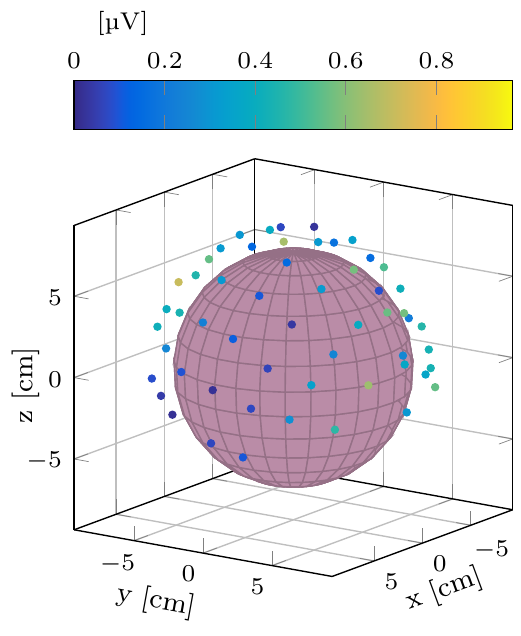}
  \subcaption{Front view of electric potential measurements absolute values}\label{fig:EEGscatterfront}
\end{subfigure}
\begin{subfigure}[t]{0.31\textwidth}
  \includegraphics[width=\figurewidth]{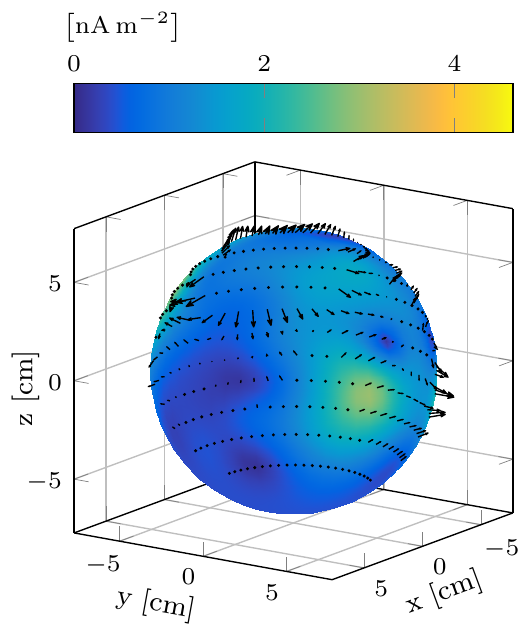}
  \subcaption{Front view of vector spline reconstruction}\label{fig:EEGfront}
\end{subfigure}\hspace*{0.02\textwidth}
\begin{subfigure}[t]{0.31\textwidth}
  \includegraphics[width=\figurewidth]{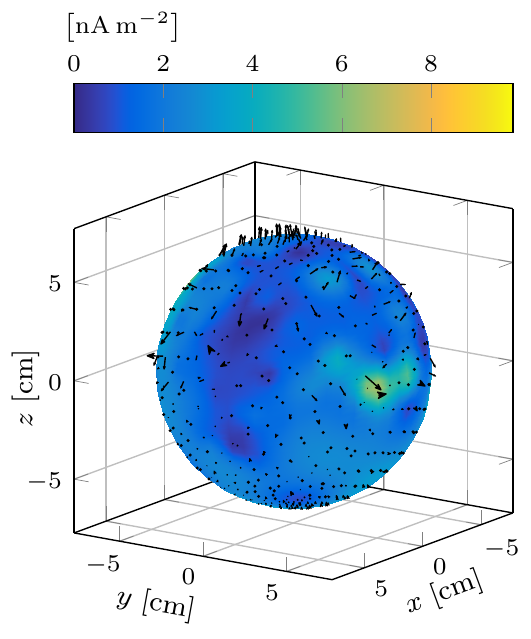}
  \subcaption{Front view of ROFMP reconstruction}\label{fig:EEGfrontRMP}
\end{subfigure}
\caption{Reconstruction of the neuronal current from real EEG electric potential measurements (left) on the scalp achieved via vector spline method (middle) and ROFMP (right)}\label{fig:realEEG}
\end{figure}

In the case of the inversion of the MEG measurements, we can see similar results, Fig.~\ref{fig:realMEG}.
The activity is mainly located in the left visual cortex and the order of magnitude seems to be realistic. 

In both cases, the vector splines method yields reasonable results.
Compared to the results achieved by the ROFMP by former research plotted on the right column in Figures \ref{fig:realEEG} and \ref{fig:realMEG}, the results of the vector spline method seem to be less over-fitted on the one hand and smoother and less blurred and unstructured on the other hand.
In contrast to the ROFMP who had a major problem with artifacts in the area of the facial data gap during the inversion of the MEG data, Fig.~\ref{fig:MEGfrontROFMP}, the vector spline method handles the data gap very well, Fig.~\ref{fig:MEGfrontVec}.
\begin{figure}
\setlength{\figurewidth}{0.33\textwidth}\centering
\begin{subfigure}[t]{0.31\textwidth}
  \includegraphics[width=\figurewidth]{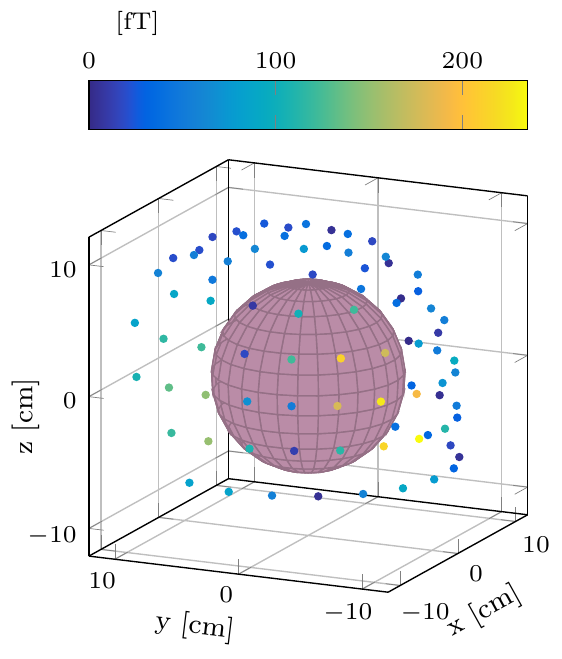}
    \subcaption{Back view of magnetic flux density measurements absolute values}\label{fig:MEGscatterback}
\end{subfigure}\hspace*{0.02\textwidth}
\begin{subfigure}[t]{0.31\textwidth}
  \includegraphics[width=0.98\figurewidth]{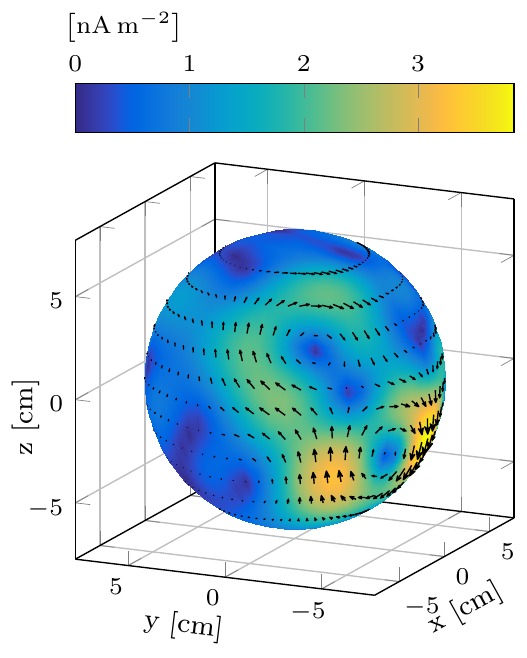}
  \subcaption{Back view of vector spline reconstruction}\label{fig:MEGback}
\end{subfigure}
\begin{subfigure}[t]{0.31\textwidth}
\includegraphics[width=\figurewidth]{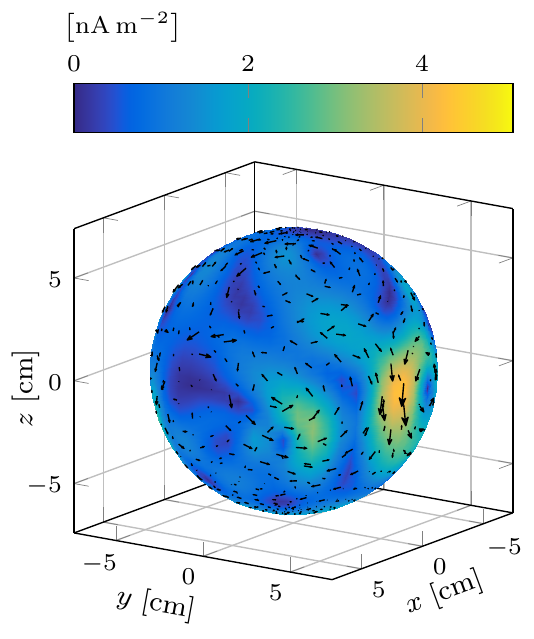}
   \subcaption{Back view of ROFMP reconstruction}\label{fig:MEGbackRFMP}
\end{subfigure}
\begin{subfigure}[t]{0.31\textwidth}
  \includegraphics[width=0.98\figurewidth]{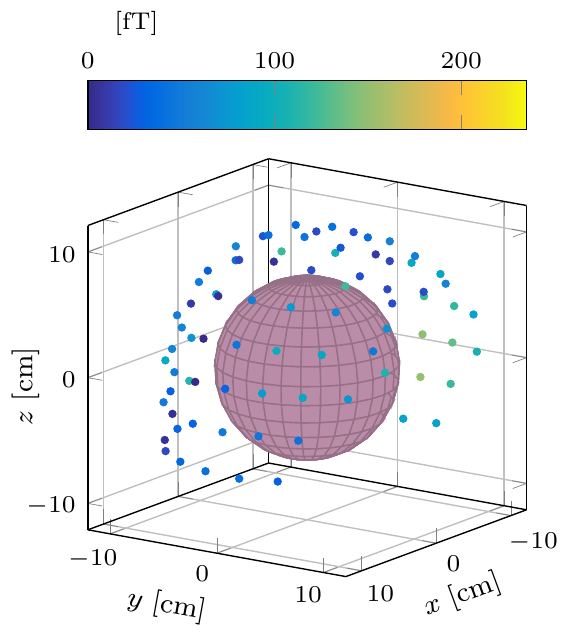}
  \subcaption{Front view of magnetic flux density measurements absolute values}\label{fig:MEGscatterfront}
\end{subfigure}\hspace*{0.02\textwidth}
\begin{subfigure}[t]{0.31\textwidth}
  \includegraphics[width=\figurewidth]{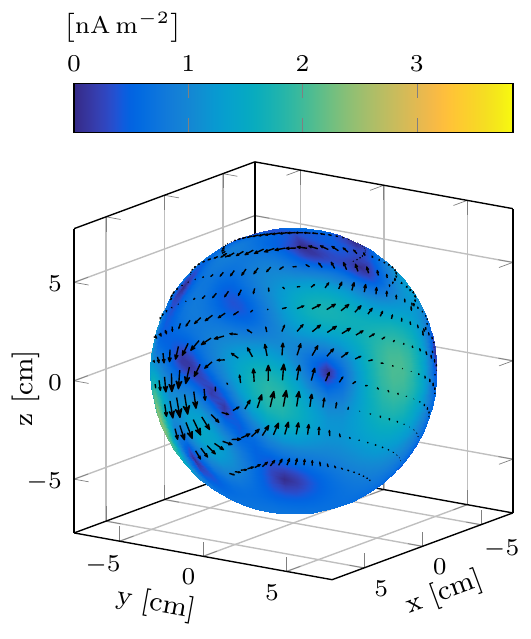}
  \subcaption{Front view of vector spline reconstruction}\label{fig:MEGfrontVec}
\end{subfigure}
\begin{subfigure}[t]{0.31\textwidth}
\includegraphics[width=\figurewidth]{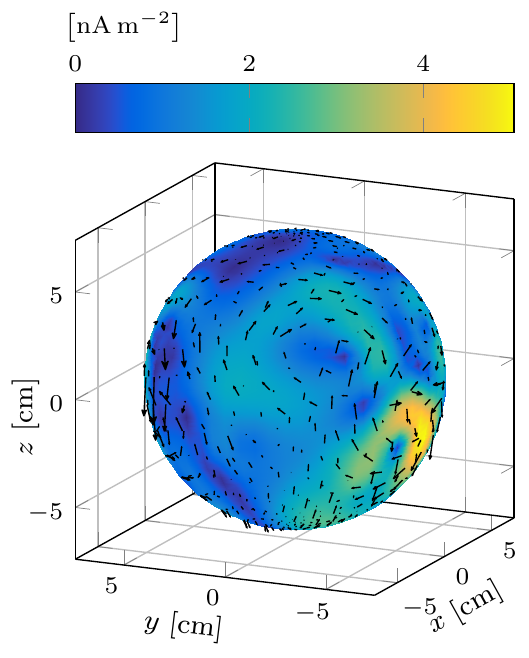}
   \subcaption{Front view of ROFMP reconstruction}\label{fig:MEGfrontROFMP}
\end{subfigure}
\caption{Reconstruction of the neuronal current from real MEG magnetic flux density measurements (left) on the scalp achieved via vector spline method (middle) and ROFMP (right)}\label{fig:realMEG}
\setlength{\figurewidth}{0.35\textwidth}\centering
\end{figure}

\section{Conclusion}\label{sec:conclude}

In order to solve functional inverse problems by means of an optimize-then-discretize approach, 
stable and robust numerical methods are required.
These need to handle difficulties caused by the ill-posedness of the problems.
More precisely, (infinite dimensional) null spaces of the related forward operators imply 
that the source contains silent parts.
Besides appropriate non-uniqueness constraints, the numerical method needs to be aware which parts of the source induce measurable effects and which do not. 
In addition, instabilities caused by decreasing singular values towards zero need to be faced
via regularization techniques. 

Within this paper we introduced a novel numerical method referred to as vector-valued spline method
in order to solve ill-posed functional inverse problems over the ball which serves these two requirements.
The vector-valued spline method is, likewise its scalar-valued relative, constructed for a particular application.

Here, we introduced the inverse MEG and EEG problem of reconstructing parts of the neuronal current from electro-magnetic quantities as a possible application.
The problem was analyzed in detail in \cite{Leweke2020}. 
The linear and continuous functionals mapping the sought source (neuronal current) onto the measurable effects (electric potential and magnetic flux density) were recapitulated
as well as related spherical orthonormal basis function. 
The vector-valued function space $\mathrm{L}_s(B_R,\mathbb{R}^3)$ was decomposed by means of these orthonormal basis 
functions into three orthogonal function spaces covering different (spherical) directions.
These directions conserve the complementarity of the operator null spaces. 
Afterwards, reproducing kernel Hilbert spaces were constructed for each direction while incorporating summability conditions 
for the existence of the occurring series representations.
Via the tensor product of the orthonormal basis functions, the reproducing kernels were built to 
fulfill a kind of reproducing property which justified the denotation.
Combining the tensor-valued reproducing kernel with an adequate operation of the linear functionals 
given by the application, the vector-valued spline functions were defined.

As it is typical for (spherical) approximation splines, the constructed vector-valued splines satisfy several useful properties, 
such as two minimum-norm properties. 
In addition, the Shannon Sampling Theorem as well as the Spline Approximation Theorem known from 
scalar-valued splines could be conserved in the vector-valued case. 
Here, the major advantage of the vector splines is revealed: The minimization of the regularized Tikhonov functional 
over the RKHS reduces to the resolution of a finite dimensional system of linear equations if the bounded linear functionals are linearly independent. 
In addition, we proved that the approximation spline converges towards the solution 
as the numbers of measurements increases. 

For our numerical tests, we used the minimum-norm condition to overcome the non-uniqueness.
Within this additional uniqueness constraint, only parts of the neuronal current which are in the 
orthogonal complement of the operator null space are reconstructed.
Within our synthetic test cases, we used a real sensor position distribution which comes along
with only few and irregularly distributed measurement positions. 
In addition, we constructed synthetic data sets caused by (global) orthonormal basis functions and (localized) splines.
In the spline case, we produced the data from splines and methods which differ from those used for the inversion in order to 
 avoid the inverse crime. 
In addition, for calculating the occurring forward functionals we used a second numerical method.
This one is grounded on numerical differentiation and integration instead of the singular value decomposition which is used in the inversion.
Besides we noised the data with additive Gaussian white noise up to $\SI{10}{\percent}$.

The intention of our numerical tests was to answer the following three questions:
\begin{itemize}
  \item Does the vector spline method produce a reasonable and correct reconstruction?
  \item Is the vector spline method stable with respect of increasing noise level?
  \item Does the vector spline method yield a benefit by avoiding the intermediate calculation of a scalar component function?
\end{itemize}

In the case of the inverse MEG and EEG problem, a scalar-valued formulation of the problem exists which was 
previously solved via a scalar spline approach, \cite{Fokas2012}. 
Hence, we constructed the MEG synthetic test case in such a way that we were able to 
compare the scalar spline method with the vector-valued approach. 
Unfortunately for the EEG case, it was shown in \cite{Leweke2020} that there exists no case where a neuronal current
can satisfy the minimum-norm condition and the conditions required in \cite{Fokas2012} simultaneously.

Since the vector spline approach is a regularization method, we used five different parameter choice methods for the 
determination of the regularization parameter. 
Namely, the automatic and manual L-curve, the discrepancy principle, the quasi-optimality criterion, and 
the generalized cross validation. 
The results achieved via these methods are compared by means of the normalized root mean square error, 
which could be calculated in the case of the synthetic tests.

Based on our numerical tests, we come to the following conclusion:
\begin{itemize}
  \item The spline method based on reproducing kernels is a fast method if the singular value decomposition is used for the calculation of the forward functionals.
  In this case, building the spline matrix requires \SI{0.4285(69)}{\second} compared to nearly \SI{18}{\day} in the case of the direct method based on numerical integration and differentiation.
  Afterwards, the inversion with \num{700} different regularization parameters including the evaluation of the five parameter-choice methods and the visualization of the reconstruction took \SI{0.5654(261)}{\second} (scalar spline) and \SI{0.4285(69)}{\second} (vector spline).
  \item The reconstructions belonging to the regularization parameter obtained by the L-curve method yielded the smallest NRMSE in most cases. 
  \item The scalar as well as the vector spline method are able to handle the challenging data situation and the irregularly distributed measurement positions. 
  \item Both spline methods are robust with respect to increasing noise levels: the relative residual is significantly below the noise level without over-fitting the reconstruction. In addition, the major deviations are located in the data gap area.
  \item During the transition of the scalar solution obtained by the scalar spline approach to a reconstruction of the vector-valued neuronal current, reconstruction errors are propagated. 
  Hence, a direct reconstruction of the vector-valued current produces more accurate and stable reconstructions than a transferred scalar reconstruction. 
\end{itemize}

After having verified the functionality of the vector spline method in several synthetic test cases,
we reconstructed the neuronal current from sets of real data.
On the reconstruction of the neuronal current from these data sets, several physiological expectations exist. 
These data sets were also inverted in \cite{Leweke2020} by means of the (orthogonal) regularized functional matching pursuit (RFMP) algorithm.
In both applications, the vector spline method yields reasonable results which satisfied physiological expectations.
Compared to former reconstructions of the RFMP, the vector spline method produced a solution which seems to be less over-fitted and avoids artifacts in the area of the facial data gap.

An extension of the vector spline method for other regularization types is a subject of current research. 
The presented method only allows cost functionals of the Tikhonov functionals which are based on a RKHS, or $\mathrm{L}_2(B_R,\mathbb{R}^3)$, respectively. 
A regularization with generalized total variation penalty terms is of great interest in our application, since particular stimuli produce highly localized brain activity.
Furthermore, our numerical tests suggest a dependency of the optimal free parameter $h$ to the mean distance of the sensor positions. 
This can be evaluated in future research.
Finally, the underlying multiple-shell model can be enhanced by passing over to more realistic brain geometries. 
In order to incorporate this enhancement into the vector-spline approach, an appropriate singular value decomposition needs to be found. 

\section*{Acknowledgements}
S. L. and V. M. gratefully acknowledge the support by the German Research Foundation (DFG), project MI 655/10-1 and MI 655/10-2.
They also appreciate the possibility to use the OMNI parallel computing cluster of the University of Siegen, Germany. 

Authors' contribution: The research was initially carried out for the PhD thesis of S.~L. and continued afterwards. 
O.~H. provided the data and contributed medical interpretation.  
V.~M. is the principal investigator and supervisor of the PhD thesis. 

\FloatBarrier
\newpage
\appendix

\section{Supplemantary Calculations for MEG and EEG Vector Splines} 

For the summability condition of the vector spline method, we need to calculate the values of $B_{m,n}^{(i)}$ for all $m\in\mathbb{N}_0$, $n \in \mathbb{N}$:
\begin{equation}\label{app:Bmn}
  \begin{aligned}
    B_{m,n}^{(i)} &= \sup_{x\in B_{\varrho_0}} \sum_{j=1}^{2n+1} \abs{h_{m,n,j}^{(i)}(x)} \\
    &=\sup_{x\in B_{\varrho_0}}  \left(\frac{4m+2t_n^{(i)}+3}{\varrho_0^3} \left(\frac{r}{\varrho_0}\right)^{2t_n^{(i)}} \left(P_{m}^{\left(0,t_n^{(i)}+1/2\right)}\left(2\frac{r^2}{\varrho_0^2}-1\right)\right)^2 \sum_{j=1}^{2n+1} \abs{\tilde{y}_{n,j}^{(i)}(\xi)}^2\right)\\
    &=\sup_{x\in B_{\varrho_0}}  \left(\frac{\left(4m+2t_n^{(i)}+3\right)(2n+1)}{4\pi\varrho_0^3} \left(\frac{r}{\varrho_0}\right)^{2t_n^{(i)}} \left(P_{m}^{\left(0,t_n^{(i)}+1/2\right)}\left(2\frac{r^2}{\varrho_0^2}-1\right)\right)^2\right)\\
    &=\frac{\left(4m+2t_n^{(i)}+3\right)(2n+1)}{4\pi\varrho_0^3} \sup_{r\in [0,\varrho_0]}  \left( \left(\frac{r}{\varrho_0}\right)^{2t_n^{(i)}} \left(P_{m}^{\left(0,t_n^{(i)}+1/2\right)}\left(2\frac{r^2}{\varrho_0^2}-1\right)\right)^2\right)\\
    &=\frac{\left(4m+2t_n^{(i)}+3\right)(2n+1)}{4\pi\varrho_0^3} \binom{m + t_n^{(i)} + 1/2}{m}^2
  \end{aligned}
\end{equation}
where we use the Addition Theorem, \ref{eq:AddThmAbs}, in the second step and the maximal values of the Jacobi polynomials, \cite[Ch. II.7]{Novikoff1938}, in the last step.

\section{Supplementary Calculations for Implementation}\label{sec:AppSupplImplement}

For an efficient implementation of the scalar as well as the vector MEG and EEG problem, some identities need to be calculated further. 
Especially, for building the spline matrices, fast and accurate computations of the occurring series are required, since manual summation can be quite slow and inaccurate. 
Hence, the main aim of this section is to get rid of the summations over $j$ and achieve some expressions based on Legendre polynomials instead. 
Afterwards, the Legendre series can be efficiently evaluated using the Clenshaw algorithm.

In the case of the scalar spline matrix, each entry contains the term
\begin{equation*}
  \sum_{j=1}^{2n+1} \left(\nu(y_k)\cdot\tilde{y}_{n,j}^{(1)}(\eta_k)\right) \left(\nu(y_l)\cdot \tilde{y}_{n,j}^{(1)}(\eta_l)\right) ,
\end{equation*}
see \eqref{eq:scalarSplineMatrixEntry}. 
Although, it consists of vector spherical harmonics, the addition theorem cannot be applied directly, due to the Euclidean inner product with the normal vector. 
By using \eqref{eq:DefiVecSph} and \eqref{eq:Defioi}, we split up the vector spherical harmonics summands. 
After using some vector calculus properties and product rules, we can finally apply the addition theorems and achieve
\begin{equation}\label{eq:scalarSplineMatrixEntryImpl}\begin{multlined}
\sum_{j=1}^{2n+1} \left(\nu(y_k)\cdot\tilde{y}_{n,j}^{(1)}(\eta_k)\right) \left(\nu(y_l)\cdot \tilde{y}_{n,j}^{(1)}(\eta_l)\right) = %
\frac{n+1}{4\pi} (\nu(y_k)\cdot \eta_k)(\nu(y_l)\cdot \eta_l) P_n(\eta_k\cdot\eta_l)\\
- \frac{1}{4\pi} (\nu(y_l)\cdot (\eta_l (\nu(y_k)\cdot \eta_l - (\eta_k\cdot\eta_l)\eta_k))P_n^\prime(\eta_k\cdot\eta_l) 
- \frac{1}{4\pi} (\nu(y_k)\cdot \eta_k) (\nu(y_l) \cdot (\eta_k-(\eta_l\cdot\eta_k)\eta_l)) P_n^\prime(\eta_l\cdot\eta_k) \\
+ \frac{1}{4\pi(n+1)}\nu(y_k)  \cdot [ \nu(y_l) \wedge (P_n^{\prime\prime}(\eta_k\cdot\eta_l)(1-(\eta_k\cdot\eta_l)^2) - (\eta_k\cdot\eta_l)P_n^{\prime}(\eta_k\cdot\eta_l))(\eta_l\wedge\eta_k) \\
   + P_n^{\prime}(\eta_k\cdot\eta_l)(\nu(y_l) - (\nu(y_l)\cdot\eta_k)\eta_k - (\nu(y_l)\cdot \eta_l)\eta_l + (\eta_k\cdot\eta_l)(\nu(y_l)\cdot \eta_k)\eta_l)\\
   + (\eta_k-(\eta_k\cdot\eta_l)\eta_l)P_n^{\prime\prime}(\eta_k\cdot\eta_l)(\eta_l\cdot\nu(\eta_l) - (\nu(y_l)\cdot\eta_k)(\eta_k\cdot\eta_l)) ].
\end{multlined}
\end{equation}
This formula can be efficiently implemented via Clenshaw's algorithm.

\begin{proof}[Proof of Thm.~\ref{thm:continuousVecFunctionals}]
  The linearity is clear, due to the linearity of the integration and differentiation.
  For all $f \in \mathscr{H}^{(3)}$ with $\norm[\mathscr{H}^{(3)}]{f} =1$ the estimate
  \begin{align*}
    \abs{\mathcal{A}_{\mathrm{M}}^{k} f}
      &= \mu_0 \abs{\sum_{n=1}^\infty \sum_{j=1}^{2n+1}  \sqrt{\frac{n \varrho_0}{(2n+1)(2n+3)}}  \scalar[\mathrm{L}^2(B_{\varrho_0},\mathbb{R}^3)]{f}{\tilde{g}^{(3)}_{0,n,j}(\varrho_0;\cdot)} \left(\frac{\varrho_0}{s_k}\right)^{n+1}\frac{1}{s_k} \nu(y_{k}) \cdot \tilde{y}_{n,j}^{(1)}(\eta_k) } \\
      &\leq \mu_0 \left(\sum_{n=1}^\infty \sum_{j=1}^{2n+1} \left(\kappa_{n}^{(3)}\right)^{-2} \frac{n }{(2n+1)(2n+3)\varrho_0}   \left(\frac{\varrho_0}{\varrho_L}\right)^{2n+2} \abs{\tilde{y}_{n,j}^{(1)}(\eta_k)}^2\right)^{1/2} \\
      & \qquad \times \left(\sum_{n=1}^\infty \sum_{j=1}^{2n+1} \left(\kappa_{n}^{(3)}\right)^2 \scalar[\mathrm{L}^2(B_{\varrho_0},\mathbb{R}^3)]{f}{\tilde{g}^{(3)}_{0,n,j}(\varrho_0;\cdot)} \right)^{1/2}  \\
      &\leq \frac{\mu_0}{\sqrt{4\pi\varrho_0}} \left(\sum_{n=1}^\infty \left(\kappa_{n}^{(3)}\right)^{-2} \frac{n}{2n+3} \left(\frac{\varrho_0}{\varrho_L}\right)^{2n+2} \right)^{1/2} \norm[\mathscr{H}^{(3)}]{f} < \infty
  \end{align*}
  holds true. Here, we used the addition theorem for Edmonds vector spherical harmonics, see \eqref{eq:AddThmAbs}, in the last step.
  In the EEG case, we obtain analogously
  \begin{align*}
    \abs{\mathcal{A}_{\mathrm{E}}^k f} &= \abs{\sum_{n=1}^\infty \sum_{j=1}^{2n+1} \frac{1}{\sqrt{n\varrho_0}} \beta^{(L)}_n \scalar[\mathrm{L}^2(B_{\varrho_0},\mathbb{R}^3)]{f}{\tilde{g}_{0,n,j}^{(i)}(\varrho_0;\cdot)} \left({(n+1)} \left(\frac{s_k}{\varrho_L}\right)^{2n+1} + {n} \right) \left(\frac{\varrho_0}{s_k}\right)^{n+1} Y_{n,j}(\eta_k)} \\
    &\leq \frac{1}{\sqrt{4\pi\varrho_0}} \left(\sum_{n=1}^\infty \left(\kappa_{n}^{(2)}\right)^{-2} \frac{(2n+1)^3}{n} \left(\beta^{(L)}_n\right)^2  \left(\frac{\varrho_0}{\varrho_{L-1}}\right)^{2n+2} \right)^{1/2} \norm[\mathscr{H}^{(2)}]{f}  \\
    &\leq \frac{C}{\sqrt{4\pi\varrho_0}} \left(\sum_{n=1}^\infty \left(\kappa_{n}^{(2)}\right)^{-2} \frac{(2n+1)^2}{n} \left(\frac{\varrho_0}{\varrho_{L-1}}\right)^{2n+2} \right)^{1/2} \norm[\mathscr{H}^{(2)}]{f} < \infty.
  \end{align*}
  In the last step, we used the fact that $\sup_{n\in\mathbb{N}} (2n+1) \left(\beta^{(L)}_n\right)^2 \leq C$ with a constant $C > 0$, see \cite[Cor.~4.3]{Leweke2018}.
\end{proof}

\begin{proof}[Proof of Lem.~\ref{lem:reprVecRKHSKernels}]
  For the proof, we use the precise representation of the orthonormal basis function and the series representation of the forward solution of $J$ in \eqref{eq:funcVector_MEG}. 
  Thus, in the MEG case we achieve
  \begin{align}
    &\phantom{=\ }\mathcal{A}_{\mathrm{M}}^{k}  \mathfrak{k}^{(3)}(\cdot,x) \\
    &= \sum_{n=1}^\infty \sum_{j=1}^{2n+1} \left(\kappa_{n}^{(3)}\right)^{-2} \tilde{g}_{0,n,j}^{(3)}(\varrho_0;x)  \mathcal{A}_{\mathrm{M}}^k \tilde{g}_{0,n,j}^{(3)}(\varrho_0;\cdot) \notag\\
    &= -\mu_0 \sum_{n=1}^\infty \sum_{j=1}^{2n+1} \left(\kappa_{n}^{(3)}\right)^{-2} \tilde{g}_{0,n,j}^{(3)}(\varrho_0;x) \sqrt{\frac{n \varrho_0}{(2n+1)(2n+3)}} \left(\frac{\varrho_0}{s_k}\right)^{n+1}\frac{1}{s_k} \nu(y_{k}) \cdot \tilde{y}_{n,j}^{(1)}(\eta_k) \notag\\
    &= -\mu_0 \sum_{n=1}^\infty \sum_{j=1}^{2n+1} \left(\kappa_{n}^{(3)}\right)^{-2} \frac{1}{\varrho_0} \left(\frac{r}{\varrho_0}\right)^{n}\tilde{y}_{n,j}^{(3)}(\xi) \sqrt{\frac{n}{2n+1}} \left(\frac{\varrho_0}{s_k}\right)^{n+1}\frac{1}{s_k} \nu(y_{k}) \cdot \tilde{y}_{n,j}^{(1)}(\eta_k) \notag\\
    &= -\mu_0 \sum_{n=1}^\infty \sum_{j=1}^{2n+1} \left(\kappa_{n}^{(3)}\right)^{-2}  \sqrt{\frac{n}{2n+1}}\left(\nu(y_{k}) \cdot \tilde{y}_{n,j}^{(1)}(\eta_k)\right)  \frac{r^n}{s_k^{n+2}} \tilde{y}_{n,j}^{(3)}(\xi). \notag
  \intertext{In order to get rid of the summation over $j$, the addition theorem needs to be applied. For this purpose, we split again the vector spherical harmonics and use some vector calculus and the product rule. Hence,}
   &\phantom{=\ }\mathcal{A}_{\mathrm{M}}^{k}  \mathfrak{k}^{(3)}(\cdot,x) \\
   &= -\mu_0 \sum_{n=1}^\infty \left(\kappa_{n}^{(3)}\right)^{-2} \frac{r^n}{s_k^{n+2}} \frac{\sqrt{n}}{2n+1} \notag\\
   &\phantom{=} \qquad \times
   \left(\sqrt{n+1}(\nu(y_k) \cdot \eta_k) \left(\sum_{j=1}^{2n+1} Y_{n,j}(\eta_k)\tilde{y}_{n,j}^{(3)}(\xi)\right) - \frac{1}{\sqrt{n+1}}\sum_{j=1}^{2n+1} (\nu(y_k)\cdot\nabla^*_{\eta_k} Y_{n,j}(\eta_k))\tilde{y}_{n,j}^{(3)}(\xi)\right) \notag\\
   &= -\frac{\mu_0}{4\pi} \sum_{n=1}^\infty \left(\kappa_{n}^{(3)}\right)^{-2} \frac{r^n}{s_k^{n+2}} \left((\nu(y_k) \cdot \eta_k) (\xi \wedge \eta_k) P_n^\prime(\xi \cdot \eta_k) \phantom{\frac{1}{{n+1}}}\right. \label{eq:proofA2}\\
   &\phantom{=} \qquad \times \left. -\frac{1}{{n+1}}\left( (\xi \wedge \eta_k) P_n^{\prime\prime}(\xi \cdot \eta_k)\left(\xi\cdot(\nu(y_k)-(\nu(y_k)\cdot\eta_k)\eta_k)\right) + P_n^\prime(\xi \cdot \eta_k) \xi \wedge (\nu(y_k) - (\nu(y_k)\cdot\eta_k)\eta_k) \right)\right) \notag
  \end{align}
  Similarly, but  with the addition theorem in the last step we obtain for the EEG case the representation
  \begin{align*}
    &\phantom{=\ } \mathcal{A}_{\mathrm{E}}^k \mathfrak{k}^{(2)}(\cdot,x) \\
    &= \sum_{n=1}^\infty \sum_{j=1}^{2n+1} \left(\kappa_{n}^{(2)}\right)^{-2} \tilde{g}_{0,n,j}^{(2)}(\varrho_0;x)  \mathcal{A}_{\mathrm{E}}^k \tilde{g}_{0,n,j}^{(2)}(\varrho_0;\cdot) \\
    &= \sum_{n=1}^\infty \sum_{j=1}^{2n+1} \left(\kappa_{n}^{(2)}\right)^{-2} \tilde{g}_{0,n,j}^{(2)}(\varrho_0;x)  \frac{1}{\sqrt{n\varrho_0}} \beta^{(L)}_n \left({(n+1)} \left(\frac{s_k}{\varrho_L}\right)^{2n+1} + {n} \right) \left(\frac{\varrho_0}{s_k}\right)^{n+1} Y_{n,j}(\eta_k) \\
    &= \sum_{n=1}^\infty \sum_{j=1}^{2n+1} \left(\kappa_{n}^{(2)}\right)^{-2} \sqrt{\frac{2n+1}{n\varrho_0^4}} \frac{r^{n-1}}{\varrho_0^{n-1}}  \beta^{(L)}_n \left({(n+1)} \left(\frac{s_k}{\varrho_L}\right)^{2n+1} + {n} \right) \left(\frac{\varrho_0}{s_k}\right)^{n+1} \tilde{y}_{n,j}^{(2)}(\xi) Y_{n,j}(\eta_k) \\
    &= \sum_{n=1}^\infty \sum_{j=1}^{2n+1} \left(\kappa_{n}^{(2)}\right)^{-2} \sqrt{\frac{2n+1}{n}} \frac{r^{n-1}}{s_k^{n+1}} \beta^{(L)}_n \left({(n+1)} \left(\frac{s_k}{\varrho_L}\right)^{2n+1} + {n} \right) \tilde{y}_{n,j}^{(2)}(\xi) Y_{n,j}(\eta_k) \\
    &= \frac{1}{4\pi}\sum_{n=1}^\infty \left(\kappa_{n}^{(2)}\right)^{-2} \frac{(2n+1)^{3/2}}{\sqrt{n}} \frac{r^{n-1}}{s_k^{n+1}} \beta^{(L)}_n \left({(n+1)} \left(\frac{s_k}{\varrho_L}\right)^{2n+1} + {n} \right) \tilde{p}_{n}^{(2)}(\xi;\eta_k).
  \end{align*}
\end{proof}

\begin{proof}[Proof of Thm.~\ref{thm:splineMatrixVector}]
  By means of the first transformation of the latter calculation, we can determine each entry of the MEG spline matrix by
  \begin{align*}
    &\phantom{=\ } \mathcal{A}_{\mathrm{M}}^l \mathcal{A}_{\mathrm{M}}^k \left(\mathfrak{k}^{(3)}(\cdot, \cdot)\right)\\
    &= \sum_{n=1}^\infty \sum_{j=1}^{2n+1} (\kappa_{n}^{(3)})^{-2}\left(\mathcal{A}_{\mathrm{M}}^l\tilde{g}_{0,n,j}^{(3)}(\varrho_0;\cdot)\right) \left(\mathcal{A}_{\mathrm{M}}^k \tilde{g}_{0,n,j}^{(3)}(\varrho_0;\cdot)\right) \\
    &= -\mu_0 
    \sum_{n=1}^\infty \sum_{j=1}^{2n+1} (\kappa_{n}^{(3)})^{-2}\left(\mathcal{A}_{\mathrm{M}}^l\tilde{g}_{0,n,j}^{(3)}(\varrho_0;\cdot)\right) 
    \sqrt{\frac{n \varrho_0}{(2n+1)(2n+3)}} \left(\frac{\varrho_0}{s_k}\right)^{n+1}\frac{1}{s_k} \nu(y_{k}) \cdot \tilde{y}_{n,j}^{(1)}(\eta_k) \\
    &= \frac{\mu_0^2}{\varrho_0} \sum_{n=1}^\infty \sum_{j=1}^{2n+1} (\kappa_{n}^{(3)})^{-2} 
    {\frac{n}{(2n+1)(2n+3)}} \left(\frac{\varrho_0^2}{s_ls_k}\right)^{n+2} \left(\nu(y_{l}) \cdot \tilde{y}_{n,j}^{(1)}(\eta_l)\right)\left( \nu(y_{k}) \cdot \tilde{y}_{n,j}^{(1)}(\eta_k)\right).
  \end{align*}
  For an efficient implementation, the summation over $j$ needs to be replaced by the expression stated in \eqref{eq:scalarSplineMatrixEntryImpl}.
  Consequently, an entry of the EEG spline matrix occurring in \eqref{eq:SplineMatrixVec} has for all $x$, $z \in B_{R}$ the representation
  \begin{align*}
    \mathcal{A}_{\mathrm{E}}^l \mathcal{A}_{\mathrm{E}}^k \left(\mathfrak{k}^{(2)}(\cdot, \cdot)\right) &= \sum_{n=1}^\infty \sum_{j=1}^{2n+1} (\kappa_{n}^{(2)})^{-2}\left(\mathcal{A}_{\mathrm{E}}^l\tilde{g}_{0,n,j}^{(2)}(\varrho_0;\cdot)\right) \left(\mathcal{A}_{\mathrm{E}}^k \tilde{g}_{0,n,j}^{(2)}(\varrho_0;\cdot)\right) \\
    &= \sum_{n=1}^\infty \left(\kappa_{n}^{(2)}\right)^{-2}  \sum_{j=1}^{2n+1} \frac{\left(\beta^{(L)}_n\right)^2}{{n\varrho_0}} \left(\frac{\varrho_0^2}{s_ls_k}\right)^{n+1}   \left({(n+1)} \left(\frac{s_l}{\varrho_L}\right)^{2n+1} + {n} \right) \\
    &\phantom{=} \times \left({(n+1)} \left(\frac{s_k}{\varrho_L}\right)^{2n+1} + {n} \right) Y_{n,j}(\eta_l) Y_{n,j}(\eta_k) \\
     &= \frac{1}{4\pi}\sum_{n=1}^\infty \left(\kappa_{n}^{(2)}\right)^{-2}  \frac{2n+1}{{n\varrho_0}} \left(\beta^{(L)}_n\right)^2 \left(\frac{\varrho_0^2}{s_ls_k}\right)^{n+1}   \left({(n+1)} \left(\frac{s_l}{\varrho_L}\right)^{2n+1} + {n} \right) \\
    &\phantom{=} \times \left({(n+1)} \left(\frac{s_k}{\varrho_L}\right)^{2n+1} + {n} \right) P_{n}(\eta_l\cdot\eta_k) . 
  \end{align*}
  Note that the addition theorem, see \eqref{eq:AdditionThm}, is used in the last step. 
\end{proof}

\printbibliography

\end{document}
